\documentclass[12pt]{amsart}
\usepackage{amssymb,amsmath,amsthm,latexsym}
\usepackage{mathrsfs}
\usepackage{a4wide}
\usepackage{eucal}
\usepackage[usenames,dvipsnames]{color}
\usepackage[utf8]{inputenc}
\usepackage[T1]{fontenc}
\usepackage{dsfont}
\usepackage{enumerate}

\newtheorem{theorem}{Theorem}[section]
\newtheorem{lemma}[theorem]{Lemma}
\newtheorem{sublemma}[theorem]{Sublemma}
\newtheorem{proposition}[theorem]{Proposition}
\newtheorem{problem}[theorem]{Problem}

\newtheorem{corollary}[theorem]{Corollary}

\theoremstyle{definition}

\newtheorem{example}[theorem]{Example}

\theoremstyle{remark}
\newtheorem{remark}[theorem]{Remark}

\numberwithin{equation}{section}




\newcommand{\jns}{\textsf{JN}-sequence}

\newcommand{\JNP}{{\textsf{JNP}}}

\newcommand{\bjns}{{\textsf{BJN}-sequence}}
\newcommand{\BJNP}{{\textsf{BJNP}}}

\DeclareMathOperator{\clo}{Cl}


\newcommand{\rstr}{\restriction}

\DeclareMathOperator{\supp}{supp}








\newcommand{\hs}{\textsc{HS}}

\newcommand{\rk}{\textsf{RK}}

\newcommand{\IR}{\mathbb R}












\newcommand{\nothing}[1]{}

\newcounter{smallromansdash}

{\end{list}}

\newcounter{bigromans} 
  {\end{list}}

\begin{document}

\title[Complemented copies of $c_0$ in spaces $C_p(X\times Y)$]{On complemented copies of the space $c_0$ in spaces $C_p(X\times Y)$}
\author[J. K\k{a}kol]{J. K\k{a}kol}
\address{Faculty of Mathematics and Computer Science, A. Mickiewicz Univesity, Pozna\'n, Poland, and Institute of Mathematics, Czech Academy of Sciences, Prague, Czech Republic.}
\email{kakol@amu.edu.pl}
\author[W. Marciszewski]{W. Marciszewski}
\address{Institute of Mathematics and Computer Science, University of Warsaw, Poland Warszawa, Poland.}
\email{wmarcisz@mimuw.edu.pl}
\author[D.\ Sobota]{D. Sobota}
\address{Universit\"at Wien, Institut f\"ur Mathematik,
Kurt G\"odel Research Center, Augasse 2-6, UZA 1 --- Building 2, 1090
Wien, Austria.} \email{damian.sobota@univie.ac.at}
\author[L. Zdomskyy]{L. Zdomskyy}
\address{Universit\"at Wien, Institut f\"ur Mathematik,
Kurt G\"odel Research Center, Augasse 2-6, UZA 1 --- Building 2, 1090
Wien, Austria.} \email{lzdomsky@gmail.com}


\begin{abstract}
Cembranos  and Freniche  proved
that for every two infinite compact Hausdorff spaces $X$ and $Y$ the
Banach space   $C(X\times Y)$ of continuous real-valued functions on
$X\times Y$ endowed with the supremum norm  contains a complemented
copy of the Banach space  $c_{0}$. We extend this theorem to the
class of $C_p$-spaces, that is, we prove that  for all  infinite
Tychonoff spaces $X$ and $Y$ the space $C_{p}(X\times Y)$ of
continuous functions on $X\times Y$ endowed with the pointwise
topology contains either a complemented copy of
$\mathbb{R}^{\omega}$ or a complemented copy of the space
$(c_{0})_{p}=\{(x_n)_{n\in\omega}\in \mathbb{R}^\omega\colon x_n\to
0\}$, both endowed with the product topology. We show that the
latter case holds always when $X\times Y$ is pseudocompact. On the
other hand, assuming the  Continuum
Hypothesis (or even a weaker set-theoretic assumption), we provide an example of  a pseudocompact space $X$ such
that  $C_{p}(X\times X)$ does
not contain a complemented copy of $(c_{0})_{p}$.

As a corollary to the first result, we show that for all infinite Tychonoff spaces $X$ and $Y$  the space  $C_{p}(X\times Y)$ is  linearly homeomorphic to the space $C_{p}(X\times Y)\times\mathbb{R}$, although, as proved earlier by Marciszewski,  there exists an infinite compact space $X$ such that $C_{p}(X)$ cannot be mapped onto  $C_{p}(X)\times\mathbb{R}$ by a continuous linear surjection. This provides a positive answer to a problem of Arkhangel'ski for spaces of the form $C_p(X\times Y)$.

Another corollary---analogous to the classical Rosenthal--Lacey theorem for Banach spaces $C(X)$ with $X$ compact and Hausdorff---asserts that for every infinite Tychonoff spaces $X$ and $Y$ the space $C_{k}(X\times Y)$ of continuous functions on $X\times Y$ endowed with the compact-open topology  admits a quotient map onto a space isomorphic to one of the following three spaces: $\mathbb{R}^\omega$, $(c_{0})_{p}$ or  $c_{0}$.
\end{abstract}

\thanks{The research for the first named
author is supported  by the GA\v{C}R project 20-22230L and RVO:
67985840. 
  The third and fourth authors thank the Austrian Science Fund FWF
(Grants I 2374-N35, I 3709-N35, M 2500-N35) for generous support for this
research. }


\maketitle

\section{Introduction}

Recall that a Banach space $E$ is a \textit{Grothendieck space} or has the \textit{Grothendieck property} if every  weakly$^{*}$ convergent sequence in the dual $E^*$ of $E$ converges weakly, i.e. every sequence $(\varphi_n)_{n\in\omega}$ of continuous functionals on $E$ satisfying the condition that $\lim_{n\to\infty}\varphi_n(x)=0$ for every $x\in E$ satisfies also the condition that $\lim_{n\to\infty}\psi(\varphi_n)=0$ for every $\psi$  in $E^{**}$, the bidual space of $E$. Grothendieck \cite{grothendieck} proved  that spaces of the form $\ell_\infty(\Gamma)$ are Grothendieck spaces. Later, many other Banach spaces were recognized to be Grothendieck, e.g. von Neumann algebras (Pfitzner \cite{pfitzner}), the space $H^\infty$ of bounded analytic functions on the unit disc (Bourgain \cite{bourgain}), spaces of the form $C(K)$ for $K$ an F-space (Seever \cite{seever}), etc. On the other hand, the space $c_0$ of all sequences convergent to $0$ is not Grothendieck, since a separable Banach  space is Grothendieck if and only if it is reflexive. It follows that closed linear subspaces of Grothendieck spaces need not be  Grothendieck, although  this property is preserved by  complemented subspaces. Cembranos \cite{Ce} proved that a space $C(K)$ is Grothendieck if and only if it does not contain any complemented copy of the space $c_0$. 



The following results due to Cembranos \cite{Ce} and Freniche \cite{fre} are the main motivation for our paper.
 \begin{theorem}[\cite{Ce}, \cite{fre}]\label{cembranos}
 Let $K$ and $L$ be infinite compact spaces and let $E$ be an infinite dimensional Banach space.
 \begin{enumerate}
 \item  $C(K,E)$  contains a complemented copy of  (the Banach space) $c_{0}$ and hence it is not a Grothendieck space.
 \item The Banach space $C(K\times L)$ contains a complemented copy of $c_{0}$.
 \end{enumerate}
 \end{theorem}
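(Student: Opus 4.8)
\emph{Proof proposal.} The plan is to deduce part~(2) from part~(1). Via the canonical isometric identification of $C(K\times L)$ with the space $C(K,C(L))$ of continuous $C(L)$-valued maps on $K$ (sending $h$ to $s\mapsto h(s,\cdot)$), and since $C(L)$ is infinite dimensional because $L$ is infinite, part~(2) is exactly the instance $E=C(L)$ of part~(1). So it suffices to prove part~(1), and for that it is enough to exhibit a bounded linear projection of $C(K,E)$ onto a subspace isometric to $c_0$; the remaining assertion that $C(K,E)$ is then not Grothendieck follows since $c_0$ is not Grothendieck and the Grothendieck property passes to complemented subspaces, as recalled in the Introduction.

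First I would set up a ``disjoint skeleton'' in $K$. Since $K$ is infinite compact Hausdorff it has a non-isolated point, and a routine induction (repeatedly separating a fresh point from the non-isolated one inside the previous neighbourhood) produces a sequence $(U_n)_{n\in\omega}$ of pairwise disjoint nonempty open subsets of $K$ together with points $x_n\in U_n$; by Urysohn's lemma choose $f_n\in C(K)$ with $0\le f_n\le 1$, $f_n(x_n)=1$ and $\{f_n\neq 0\}\subseteq U_n$, so that $f_m(x_n)=\delta_{m,n}$. On the Banach space side, apply the Josefson--Nissenzweig theorem to get a sequence $(\varphi_n)_{n\in\omega}$ in $E^*$ with $\|\varphi_n\|=1$ and $\varphi_n\to 0$ in the weak$^*$ topology, and pick $e_n\in E$ with $\|e_n\|=1$ and $c_n:=\varphi_n(e_n)\ge 1/2$ (possible since $\|\varphi_n\|=1$). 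Define
\[
T\colon c_0\longrightarrow C(K,E),\qquad T\big((a_n)_n\big)=\sum_{n\in\omega}a_n f_n e_n .
\]
Since the summands have pairwise disjoint supports and the $n$-th summand has norm $|a_n|$, the partial sums converge uniformly whenever $(a_n)\in c_0$; thus $T$ is well defined and $\|T((a_n))\|_\infty=\sup_n|a_n|$, i.e. $T$ is an isometric embedding.

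For the projection I would define
\[
S\colon C(K,E)\longrightarrow c_0,\qquad S(g)=\Big(\tfrac{1}{c_n}\,\varphi_n\big(g(x_n)\big)\Big)_{n\in\omega}.
\]
The key point --- and, I expect, the only step beyond bookkeeping --- is that $S$ really takes values in $c_0$: for continuous $g$ the image $g(K)$ is norm-compact in $E$, and a norm-bounded weak$^*$-null sequence converges to $0$ uniformly on any norm-compact set (cover the compactum by finitely many balls of small radius and use weak$^*$-convergence at their centres), so $\varphi_n(g(x_n))\to 0$. Then $S$ is bounded with $\|S\|\le 2$, and a direct computation using $f_m(x_n)=\delta_{m,n}$ and $\varphi_n(e_n)=c_n$ shows $S\circ T=\mathrm{id}_{c_0}$. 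Hence $P:=T\circ S$ is a bounded projection of $C(K,E)$ onto $T(c_0)$, which is isometric to $c_0$; this establishes part~(1), and the case $E=C(L)$ establishes part~(2). The genuinely non-elementary ingredient is the Josefson--Nissenzweig theorem, invoked to produce the normalized weak$^*$-null functionals $(\varphi_n)$; everything else reduces to the disjoint-supports construction together with the uniform-convergence-on-compacta observation.
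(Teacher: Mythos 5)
Your proposal is correct. The paper does not actually prove this theorem --- it is quoted from Cembranos and Freniche --- and the only argument recorded there is the reduction of (2) to (1) via the isometric identification $C(K\times L)\cong C(K,C(L))$, which you reproduce exactly. The proof of (1) that you supply is the standard Josefson--Nissenzweig argument (pairwise disjointly supported Urysohn functions $f_n$ with $f_m(x_n)=\delta_{m,n}$, a normalized weak$^*$-null sequence $(\varphi_n)$ in $E^*$, and near-norming vectors $e_n$), and the one genuinely delicate step --- that $S$ takes values in $c_0$ because a norm-bounded weak$^*$-null sequence converges uniformly on the norm-compact set $g(K)$ --- is handled correctly, after which $S\circ T=\mathrm{id}_{c_0}$ and $P=T\circ S$ give the complemented copy; the non-Grothendieck conclusion then follows as you say, since the Grothendieck property passes to complemented subspaces and $c_0$ lacks it.
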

The second statement of Theorem \ref{cembranos} follows from the first one
combined with the fact that $C\left( K\times L\right) $ is isomorphic to $C\left( K,C\left( L\right) \right) $.

A strengthening of Theorem \ref{cembranos}.(2) was obtained by K\k{a}kol, Sobota and Zdomskyy \cite[Corollary 11.5]{KSZ} for spaces of the form $C_p(K\times L)$ (note that it follows from the Closed Graph Theorem that a complemented copy of  $(c_0)_p$ in $C_p(K\times L)$ is actually a complemented copy of $c_0$ in $C(K\times L)$, cf. \cite[Proposition 6.1]{KSZ}). Here, by $C_{p}(X)$  and $C_{k}(X)$ we mean the space $C(X)$ of continuous real-valued functions over a Tychonoff space $X$ endowed with the pointwise and compact-open topology, respectively. $C^{*}_{p}(X)$ denotes  the vector  subspace of $C_{p}(X)$ consisting of bounded functions.
Recall that if $K$ is an infinite compact space, then one can easily construct an isometric copy of the space $c_0$ (based on disjointly supported functions) inside $C(K)$ (see \cite[Prop. 4.3.11]{AK}), but this copy (and any other) may not be complemented. The same construction applies to the space $C_p(K)$---it always contains a (closed) subspace isomorphic to $(c_{0})_{p}= \{(x_n)_{n\in\omega}\in \mathbb{R}^\omega:x_n\to 0\}$  endowed with the product topology of $\mathbb{R}^\omega$.

\begin{theorem}[{\cite[Corollary 11.5]{KSZ}}]\label{product_compact_jnp}
For every infinite compact spaces $K$ and $L$ the space $C_p(K\times L)$ contains a complemented copy of the space $(c_0)_p$.
\end{theorem}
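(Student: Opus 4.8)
The plan is to produce the complemented copy of $(c_0)_p$ by hand, as the closed linear span of a sequence $(\phi_n)_{n\in\omega}$ of continuous functions on $K\times L$ with pairwise disjoint supports, the projection being given by a uniformly bounded, weak${}^*$-null sequence $(c_n)_{n\in\omega}$ of finitely supported measures dual to it. Thus I would first reduce the theorem to the claim that there exist $\phi_n\in C(K\times L)$ with $\|\phi_n\|_\infty\le 1$ and pairwise disjoint supports, together with finitely supported signed measures $c_n$ on $K\times L$ satisfying $\sup_n\|c_n\|<\infty$, $c_n(\phi_m)=\delta_{nm}$, and $c_n(f)\to 0$ for every $f\in C(K\times L)$. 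Granting this, the linear map $T\colon (c_0)_p\to C_p(K\times L)$ given by $T((a_n)_n)=\sum_n a_n\phi_n$ is a homeomorphic embedding: the series converges uniformly since $\|a_n\phi_n\|_\infty\le|a_n|\to 0$, at each point of $K\times L$ at most one summand is nonzero (which makes $T$ continuous into $C_p(K\times L)$), and $c_n(T((a_m)_m))=a_n$, so $T$ is injective with inverse the restriction to $T[(c_0)_p]$ of the continuous map $f\mapsto(c_n(f))_n$. Furthermore $P(f):=\sum_n c_n(f)\phi_n$ is then a continuous linear projection of $C_p(K\times L)$ onto $T[(c_0)_p]$, because $(c_n(f))_n\in(c_0)_p$ by hypothesis, so $P=T\circ(c_n)_n$. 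Hence everything reduces to the construction of the pair $\big((\phi_n),(c_n)\big)$.

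Here the \emph{product} structure is essential. Fix pairwise disjoint nonempty open $U_i\subseteq K$, points $x_i\in U_i$, and $g_i\in C(K,[0,1])$ with $g_i(x_i)=1$ and $\supp g_i\subseteq U_i$ (so $g_{i'}(x_i)=\delta_{i'i}$), and symmetrically $V_j,y_j,h_j$ in $L$. The naive choice $\phi_n=g_n\otimes h_n$ with $c_n$ living on a single fibre $\{x_n\}\times L$ cannot work: $c_n(f)\to 0$ for all $f$ would force $L$ to carry a normalized, weak${}^*$-null, \emph{finitely supported} sequence of measures, which fails e.g.\ for $L=\beta\omega$. Instead I would use a \emph{Hadamard block}: partition $\omega$ into pairwise disjoint finite sets $R_1,C_1,R_2,C_2,\dots$ with $|R_n|=|C_n|=N_n:=2^n$, let $H^{(n)}=(\theta^{(n)}_{ij})_{i\in R_n,\,j\in C_n}$ be a Hadamard matrix of size $N_n$ (entries $\theta^{(n)}_{ij}\in\{-1,1\}$, $(H^{(n)})^{\top}H^{(n)}=N_n\,\mathrm{Id}$; a Walsh matrix works), and put
\[
\phi_n:=\sum_{i\in R_n}\sum_{j\in C_n}\theta^{(n)}_{ij}\,(g_i\otimes h_j),
\qquad
c_n:=\frac{1}{N_n^{2}}\sum_{i\in R_n}\sum_{j\in C_n}\theta^{(n)}_{ij}\,\delta_{(x_i,y_j)}.
\]
Since the $U_i$ are pairwise disjoint — hence so are the rectangles $\supp g_i\times\supp h_j$ — and the blocks $R_n\times C_n$ are pairwise disjoint, the $\phi_n$ have pairwise disjoint supports and $\|\phi_n\|_\infty\le 1$; moreover $\phi_n(x_i,y_j)=\theta^{(n)}_{ij}$ for $(i,j)\in R_n\times C_n$, so $c_n(\phi_n)=N_n^{-2}\sum_{i,j}(\theta^{(n)}_{ij})^{2}=1$, $c_n(\phi_m)=0$ for $m\neq n$, and $\|c_n\|=N_n^{-2}\cdot N_n^{2}=1$.

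The one nontrivial point left is $c_n(f)\to 0$ for every $f\in C(K\times L)$, and this is the heart of the proof. By Stone--Weierstrass, $C(K\times L)$ is the uniform closure of $C(K)\otimes C(L)$, so for given $f$ and $\e>0$ one may pick $\sum_l a_l\otimes b_l$ with $\|f-\sum_l a_l\otimes b_l\|_\infty<\e$; since $\|c_n\|=1$, it suffices to prove $c_n(a\otimes b)\to 0$ for fixed $a\in C(K)$, $b\in C(L)$. With $u=(a(x_i))_{i\in R_n}$ and $v=(b(y_j))_{j\in C_n}$,
\[
|c_n(a\otimes b)|=\frac{1}{N_n^{2}}\,\big|\langle u,H^{(n)}v\rangle\big|
\le\frac{1}{N_n^{2}}\,\|u\|_2\,\|H^{(n)}v\|_2
=\frac{\sqrt{N_n}}{N_n^{2}}\,\|u\|_2\,\|v\|_2
\le N_n^{-1/2}\,\|a\|_\infty\|b\|_\infty\ \longrightarrow\ 0,
\]
using $\|u\|_2\le\sqrt{N_n}\|a\|_\infty$, $\|v\|_2\le\sqrt{N_n}\|b\|_\infty$, and $\|H^{(n)}v\|_2=\sqrt{N_n}\|v\|_2$. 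This completes the construction; a random choice of the signs $\theta^{(n)}_{ij}$, controlled by a union bound over the $2^{2N_n}$ sign vectors in $\{-1,1\}^{N_n}\times\{-1,1\}^{N_n}$, would do just as well. The genuine obstacle, in my view, is the conceptual content of these last two paragraphs: recognizing that no fibrewise family of functionals can succeed, and that one must instead average over a two-dimensional block achieving square-root cancellation against every rank-one test function — something the product $K\times L$ allows but a single factor does not. Once that Hadamard (or random) estimate is isolated, the remaining verifications are routine.
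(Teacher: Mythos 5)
Your proof is correct, and its combinatorial core coincides with the paper's: the functionals $c_n$ you build from a Hadamard block are a square-matrix variant of the measures $\mu_n=\sum_{s\in\Omega_n,\,i\in\Sigma_n}\frac{s(i)}{n2^n}\delta_{(s,i)}$ of Proposition~\ref{beta^2_has_JNP}, whose $2^n\times n$ ``full Rademacher'' sign pattern achieves exactly the same square-root cancellation against rank-one tensors $a\otimes b$, extended to all of $C(K\times L)$ by Stone--Weierstrass and $\sup_n\|c_n\|<\infty$. Where you differ is in the packaging. The paper constructs its sequence on $\beta\omega\times\beta\omega$, transfers it to the product via Stone extensions (and, in the pseudocompact generalization of Theorem~\ref{pseudocompact_product_has_JNP}, via Glicksberg's theorem), and then invokes the Banakh--K\k{a}kol--\'Sliwa equivalence (Theorem~\ref{ba-ka1}) to convert the resulting \jns\ into a complemented copy of $(c_0)_p$; you instead work directly inside $K\times L$ with pairwise disjoint open rectangles and build the embedding $T$ and the projection $P=T\circ(c_n)_n$ by hand, so no appeal to Theorem~\ref{ba-ka1} or to $\beta\omega$ is needed --- in effect you merge the measure construction with the explicit complementation argument that the paper carries out separately in Proposition~\ref{lastcase}(2). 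Your route is more elementary and self-contained for the compact case; the paper's factorization through the \JNP\ buys the stronger statement for arbitrary pseudocompact products, where disjoint open rectangles with the needed normalized dual functions are not available so directly and the passage through $\beta X\times\beta Y$ does real work. Your observation that no fibrewise (rank-one) choice of the $c_n$ can succeed, e.g.\ for $L=\beta\omega$, matches the paper's motivation precisely.
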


In \cite{KSZ} many other spaces $C_p(X)$ were recognized to contain a complemented copy of $(c_0)_p$, i.a. those $C_p(X)$ where $X$ is a compact space such that the Banach space $C(X)$ is not Grothendieck. In fact, for compact spaces $K$, the existence of a complemented copy of $(c_0)_p$ in the space $C_p(K)$ appeared to be equivalent to the property that the Banach space $C(K)$ does not have the so-called $\ell_1$-Grothendieck property, a variant of the Grothendieck property defined as follows: for a given compact space $K$ we say that a Banach space $C(K)$ has the \textit{$\ell_1$-Grothendieck property} if every weakly$^*$ convergent sequence of Radon measures on $K$ with countable supports (equivalently, with finite supports) is weakly convergent; see \cite[Section 6]{KSZ} for details. Trivially, if $C(K)$ is Grothendieck, then it has the $\ell_1$-Grothendieck property; a counterexample for the reverse implication was first constructed by Plebanek (\cite{GP_unpubl}, see also \cite{Bie11}), in \cite[Section 7]{KSZ} another example and a more detailed discussion on this topic were provided.

The research in \cite{KSZ} was motivated, i.a., by the following theorem of Banakh, K\k akol and \'Sliwa \cite{BKS1}, especially by the equivalence (1)$\Leftrightarrow$(2). We refer the reader to the paper \cite{KSZ} for a detailed discussion concerning properties of sequences of measures from (1).
%
\begin{theorem}[{\cite[Theorem 1]{BKS1}}]\label{ba-ka1}
For a Tychonoff  space $X$ the following conditions are equivalent:
\begin{enumerate}
\item $C_{p}(X)$ satisfies the Josefson--Nissenzweig property (\JNP\,\, in short), i.e. there is a sequence $(\mu_n)_{n}$ of finitely supported signed measures  on $X$ such that $\|\mu_n\|=1$ for all $n\in \omega$, and $\mu_n(f)\to_n 0$ for each $f\in C_p(X)$.
\item $C_p(X)$ contains a complemented  subspace isomorphic to $(c_0)_{p}$;
\item $C_p(X)$ has a quotient isomorphic to $(c_0)_{p}$;
\item $C_p(X)$ admits a linear continuous map onto $(c_0)_{p}$.

\medskip

\noindent Moreover, for pseudocompact $X$ the above conditions are equivalent to:
\item $C_{p}(X)$ contains an infinite dimensional complemented  metrizable subspace.
\end{enumerate}
\end{theorem}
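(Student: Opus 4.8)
The plan is to establish the cycle $(1)\Rightarrow(2)\Rightarrow(3)\Rightarrow(4)\Rightarrow(1)$ and, under pseudocompactness, to add $(2)\Rightarrow(5)\Rightarrow(4)$. The implications $(2)\Rightarrow(3)\Rightarrow(4)$ are immediate: if $C_p(X)=Y\oplus Z$ with $Y$ a copy of $(c_0)_p$, then the projection onto $Y$ along $Z$ is a quotient map onto $(c_0)_p$, and any quotient map is a continuous linear surjection. Also $(2)\Rightarrow(5)$ is trivial, since $(c_0)_p$ is an infinite-dimensional metrizable space (a linear subspace of $\R^\omega$).

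For $(4)\Rightarrow(1)$, let $T\colon C_p(X)\to(c_0)_p$ be a continuous linear surjection. As $C_p(X)^*$ consists exactly of the finitely supported signed measures on $X$, each coordinate functional $p_n$ of $(c_0)_p$ pulls back to such a measure $\nu_n:=p_n\circ T$; since $T(f)\in(c_0)_p$ for every $f$, we get $\nu_n(f)\to 0$ for all $f\in C_p(X)$, and surjectivity forces $\nu_n\neq 0$. It then remains to pass to a subsequence along which $\inf_n\|\nu_n\|>0$, for then $\mu_n:=\nu_n/\|\nu_n\|$ is a \JNP-sequence. The one point needing an argument is that $\|\nu_n\|$ cannot tend to $0$ along the whole sequence: otherwise one chooses $(a_n)\in(c_0)_p$ decaying more slowly than $(\|\nu_n\|)_n$ (e.g.\ $a_n=\sqrt{\|\nu_n\|}$), writes $(a_n)=T(f)$, and combines $|a_n|\le\|\nu_n\|\cdot\max_{x\in\supp\nu_n}|f(x)|$ with the continuity of $f$ and the structure of $T$ (after a $\Delta$-system reduction on the supports) to reach a contradiction. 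I regard this normalization lemma as the only subtlety in this direction.

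The substantial step is $(1)\Rightarrow(2)$, and this is where I expect the main difficulty. Start from a \JNP-sequence $(\mu_n)$, $\|\mu_n\|=1$, $\mu_n(f)\to 0$. The key is to pass to a \emph{regular} \JNP-sequence: applying the $\Delta$-system lemma to the finite supports $S_n=\supp\mu_n$, arguing that the common root contributes negligibly and can be removed (for instance by passing to suitable differences of the root-free parts of consecutive measures) and re-normalizing, one may assume the $S_n$ are pairwise disjoint; using complete regularity of $X$ one then arranges that the $S_n$ are enclosed in a point-finite (ideally pairwise disjoint) family of open sets. This regularization is the crux: for a merely Tychonoff $X$ the supports of an arbitrary \JNP-sequence can be, say, dense in $X$, which would block the construction below, so one must genuinely improve the sequence. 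Given a regular \JNP-sequence, choose continuous functions $g_n\in C_p(X)$ with $\sup_n\|g_n\|_\infty\le 1$, with $\{x:g_n(x)\neq 0\}$ contained in the $n$-th set of the point-finite family, with $g_n\restriction S_m\equiv 0$ for $m\neq n$, and with $g_n\restriction S_n$ prescribed (e.g.\ $g_n(x)=\sgn(\mu_n(\{x\}))$) so that $\mu_n(g_n)=\|\mu_n\|=1$; thus $\mu_m(g_n)=\delta_{mn}$. Now define $P\colon C_p(X)\to C_p(X)$ by $P(f)=\sum_n\mu_n(f)g_n$. Point-finiteness makes this a finite sum at each point, so $P(f)\in C(X)$ and $P$ is a continuous linear operator; $P^2=P$ follows from biorthogonality; and, using $\mu_n(f)\to 0$, one checks that $(a_n)_n\mapsto\sum_n a_n g_n$ is a linear homeomorphism of $(c_0)_p$ onto $\ran P$, with inverse $g\mapsto(\mu_n(g))_n$. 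Hence $\ran P$ is a complemented copy of $(c_0)_p$ in $C_p(X)$.

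For the remaining pseudocompact implication $(5)\Rightarrow(4)$: let $M$ be an infinite-dimensional complemented metrizable subspace of $C_p(X)$. Metrizability forces the topology of $M$ to be induced by countably many of the seminorms $f\mapsto\max_{x\in F}|f(x)|$, hence by restriction to a countable $D\subseteq X$; so $M$ embeds topologically into $\R^\omega$, and in particular $M$ is not normable (a neighbourhood of $0$ in a $C_p$-space always contains an infinite-dimensional linear subspace). Being complemented, $M$ is a quotient of $C_p(X)$; and since $X$ is pseudocompact, $C_p(X)$ admits no quotient isomorphic to $\R^\omega$ (every $f\in C(X)$ is bounded, so the preimage in $C_p(X)$ of a suitable functionally-bounded-but-unbounded subset of $\R^\omega$ would contradict pseudocompactness). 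Invoking the dichotomy for infinite-dimensional metrizable spaces arising as such quotients/complemented subspaces of $C_p$-spaces, $M$ --- and hence $C_p(X)$ --- admits a continuous linear surjection onto $(c_0)_p$, which is condition $(4)$.
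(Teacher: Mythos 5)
First, a caveat: the paper does not prove this theorem --- it is imported from \cite{BKS1} --- so there is no in-paper proof to compare against. The closest material is Proposition~\ref{union_supp_bounded} (the union of supports of a \jns\ is functionally bounded in $X$) and Theorem~\ref{bjnp_c_0} (the bounded analogue, where (4)$\Rightarrow$(1) is done via the Closed Graph and Open Mapping Theorems for $C^*(X)\to c_0$, and (1)$\Rightarrow$(2) is again delegated to Lemma~1 of \cite{BKS1}). Judged on its own, your proposal has genuine gaps at exactly the points you flag as delicate. In (4)$\Rightarrow$(1), your normalization argument does not close: from $a_n=\sqrt{\|\nu_n\|}=\nu_n(f)$ you get only $\max_{x\in\supp\nu_n}|f(x)|\ge\|\nu_n\|^{-1/2}\to\infty$, which for a general Tychonoff $X$ is no contradiction --- continuous functions may be unbounded, and neither ``continuity of $f$'' nor a $\Delta$-system reduction changes that. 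The missing ingredient is that $\bigcup_n\supp\nu_n$ is \emph{functionally bounded} in $X$; this is Proposition~\ref{union_supp_bounded}, whose proof (via van Mill's support theorem for the map $f\mapsto(\nu_n(f))_n$ into $C_p(S)$, $S$ a convergent sequence) uses only $\nu_n(f)\to 0$ and not $\|\nu_n\|=1$. With it, $\sup_n\max_{\supp\nu_n}|f|<\infty$ for every $f$ and your computation finishes instantly; alternatively it shows $T\restriction C_p^*(X)$ is still surjective (replace $f$ by a truncation), after which the CGT/OMT argument of Theorem~\ref{bjnp_c_0} gives $\inf_n\|\nu_n\|>0$ along a subsequence.

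In (1)$\Rightarrow$(2) you correctly identify the regularization of supports as the crux but do not carry it out, and both sketched steps have holes. The $\Delta$-system root removal again needs functional boundedness of $\bigcup_n\supp\mu_n$ (to show that if the off-root mass $\|\tau_k\|$ tends to $0$ then $\tau_k(f)\to0$, so the mass cannot concentrate on the root). More seriously, even with pairwise disjoint supports $S_n$, a point-finite family of open sets separating them need not exist for the given sequence: on $X=[0,1]$ take $\mu_n=\tfrac12(\delta_{x_n}-\delta_{y_n})$ with $|x_n-y_n|\to0$ and $\{x_n,y_n:n\in\omega\}$ dense --- every nonempty open set meets infinitely many $S_n$. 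Since, as you note, continuity of the section $(a_n)\mapsto\sum_n a_ng_n$ into $C_p(X)$ \emph{forces} the cozero sets of the $g_n$ to be point-finite, one must pass to a carefully chosen further subsequence so that the supports accumulate in a controlled way; this refinement is the actual content of Lemma~1 of \cite{BKS1} and is not delivered by ``complete regularity''. Finally, (5)$\Rightarrow$(4) invokes an unnamed ``dichotomy'' for infinite-dimensional metrizable complemented subspaces of $C_p$-spaces, which is essentially the assertion to be proved, so as written that step is circular. The formal skeleton --- the cycle, (2)$\Rightarrow$(3)$\Rightarrow$(4), (2)$\Rightarrow$(5), and the biorthogonal projection $P(f)=\sum_n\mu_n(f)g_n$ once point-finite $g_n$ are available --- is correct.
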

Consequently, if  the space $C_{p}(X)$ contains a complemented copy of $(c_0)_{p}$, the same holds for $C_{p}(X\times Y)$ for all non-empty Tychonoff spaces $Y$, since such $C_p(X\times Y)$ contains a complemented copy of $C_p(X)$. The converse implication does not hold (e.g. for $X=\beta\omega$).

Condition (1) in Theorem \ref{ba-ka1} is a variant of the celebrated Josefson--Nissenzweig theorem for $C_p(X)$ spaces. Recall here that
the Josefson--Nissenzweig theorem  asserts that for each infinite-dimensional Banach space $E$ there exists a sequence $(x_n^*)_n$ in the dual space $E^*$ convergent to $0$ in the weak$^{*}$ topology of $E^*$ and such that $\|x_n^*\|=1$ for every $n\in\omega$, see e.g. \cite{Diestel} (a proof for the case $E=C(K)$ was provided in \cite[Section 3]{KSZ}, too).

The main results of the present paper generalize Theorems \ref{cembranos}.(2) and \ref{product_compact_jnp} to pseudocompact spaces in the following way.

\begin{theorem}\label{solution}
Let $X$ and $Y$ be infinite Tychonoff  spaces. Then:
\begin{enumerate}
\item If the space $X\times Y$ is pseudocompact, then   $C_{p}(X\times Y)$ contains a complemented copy of $(c_{0})_{p}$.
\item If  the space  $X\times Y$ is not pseudocompact, then  $C_{p}(X\times Y)$ contains a complemented copy of $\mathbb{R}^{\omega}$.
\end{enumerate}
\end{theorem}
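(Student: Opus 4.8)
The plan is to handle the two cases separately, using Theorem~\ref{ba-ka1} to reduce part (1) to the construction of an appropriate Josefson--Nissenzweig sequence of finitely supported measures on $X\times Y$, and to reduce part (2) to producing a ``coordinate-like'' quotient onto $\R^\omega$.

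For part (1), assume $X\times Y$ is pseudocompact. By the equivalence (1)$\Leftrightarrow$(2) of Theorem~\ref{ba-ka1} it suffices to build a sequence $(\mu_n)_{n\io}$ of finitely supported signed measures on $X\times Y$ with $\norm{\mu_n}=1$ and $\mu_n(f)\to 0$ for every $f\in C_p(X\times Y)$; in fact one should aim for something slightly stronger so that the induced operator $f\mapsto(\mu_n(f))_n$ is a \emph{quotient} onto $(c_0)_p$ with a bounded linear right inverse, which is what ultimately yields the complemented copy (this is exactly the mechanism behind Theorems~\ref{product_compact_jnp} and~\ref{cembranos}). First I would recall the Cembranos--Freniche idea in the $C(K)$ setting: pick countably many pairwise disjoint nonempty open sets $U_n\sub K$, $V_n\sub L$, points $x_n\in U_n$, $y_n\in V_n$, and bump functions supported inside $U_n\times V_n$; the measures $\mu_n=\frac12(\delta_{(x_n,y_n)}-\delta_{(x_n',y_n')})$ built from two such grids form the JN-sequence, and the averaging/interpolation over the disjoint ``squares'' produces the complementation. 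The key point is that in a pseudocompact (not necessarily compact) Tychonoff space we can still choose such a \emph{locally finite in the interior} family of open sets and control limits along it: here I would invoke the standard fact that an infinite Tychonoff space contains a countable discrete family of nonempty open sets, and — crucially — that pseudocompactness of $X\times Y$ forces every sequence $(f(x_n,y_n))_n$ with $f$ continuous to ``not escape to infinity along the grid'', so that the functionals $\mu_n$ genuinely tend to $0$ pointwise on $C_p$. One then checks that the operator $T(f)=(\mu_n(f))_n$ maps $C_p(X\times Y)$ onto $(c_0)_p$ (using Urysohn functions subordinate to the disjoint squares to hit an arbitrary element of $(c_0)_p$, the convergence to $0$ of the target sequence matching the vanishing of $f$ off large finite unions, which is where pseudocompactness is used again) and admits a continuous linear section. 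The main obstacle is precisely this last continuity/boundedness bookkeeping without compactness: one cannot use the sup-norm, so the section has to be built coordinatewise with the product topology in mind, and one must verify that the finitely-supported ``squares'' can be chosen with closures behaving well enough (e.g. the family $\{U_n\times V_n\}$ discrete, or at least the relevant sums converging pointwise) — this is the technical heart of the argument.

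For part (2), suppose $X\times Y$ is not pseudocompact. Then by definition there is a continuous function $h\colon X\times Y\to\R$ that is unbounded, equivalently there is a countable discrete (in $X\times Y$) family witnessing a locally finite open cover on which $C_p(X\times Y)$ ``sees $\R^\omega$''. Concretely I would extract points $(x_n,y_n)$ and pairwise disjoint open neighbourhoods $W_n$ with $\{W_n\}$ locally finite, then let $T(f)=(f(x_n,y_n))_n\in\R^\omega$; local finiteness guarantees that, given any $(t_n)\in\R^\omega$, a function agreeing with $t_n$ near $(x_n,y_n)$ and $0$ outside $\bigcup W_n$ is continuous, so $T$ is onto, and the section $(t_n)\mapsto$ (that function) is linear and continuous from $\R^\omega$ into $C_p(X\times Y)$; hence $\ker T$ is complemented and $C_p(X\times Y)\cong\ker T\times\R^\omega$, giving the complemented copy of $\R^\omega$. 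The only subtlety is producing the locally finite disjoint family from mere non-pseudocompactness: this is classical (non-pseudocompactness of a Tychonoff space is equivalent to the existence of a locally finite sequence of nonempty open sets), so part (2) is genuinely the easier half, and I expect essentially all the difficulty of Theorem~\ref{solution} to reside in the complementation argument of part (1) sketched above.
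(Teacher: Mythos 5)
Part (2) of your proposal is fine: the evaluation map along a locally finite disjoint family of open sets, together with the coordinatewise section, is exactly the classical argument (the paper simply cites it as well known), and the continuity of the section in the product topologies works as you describe.

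Part (1), however, has a genuine gap, and it sits exactly at what you yourself identify as "the technical heart." The measures you propose, $\mu_n=\tfrac12\big(\delta_{(x_n,y_n)}-\delta_{(x_n',y_n')}\big)$ supported on a disjoint grid of rectangles, do \emph{not} form a Josefson--Nissenzweig sequence on a product: already on $\beta\omega\times\beta\omega$, taking $(x_n,y_n)=(n,0)$ and $(x_n',y_n')=(n,1)$ and $f(x,y)=g(y)$ with $g(0)=1$, $g(1)=0$ gives $\mu_n(f)=\tfrac12$ for all $n$. A sequence of two-point measures with pairwise disjoint supports can always be "separated" by a function depending on one coordinate. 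The actual construction requires measures spread over $n\times 2^n$ grids with Rademacher-type signs, namely $\mu_n=\sum_{s\in\{-1,1\}^n,\,i<n}\frac{s(i)}{n2^n}\delta_{(s,i)}$ on $\omega\times\omega\subset\beta\omega\times\beta\omega$ (Proposition \ref{beta^2_has_JNP}, from \cite{KSZ}); the weak$^*$ convergence of this sequence is a nontrivial combinatorial fact that your sketch does not reproduce or replace. Your appeal to pseudocompactness ("the grid does not escape to infinity") only controls boundedness of $f$ along the grid, which is far from $\mu_n(f)\to 0$.

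The second missing ingredient is \emph{how} pseudocompactness of the product enters: the paper uses Glicksberg's theorem, which says that $X\times Y$ pseudocompact implies $\beta(X\times Y)=\beta X\times\beta Y$. One then takes countable discrete sets $D\sub X$, $E\sub Y$, extends the enumerations to continuous maps $\beta\omega\to\beta X$, $\beta\omega\to\beta Y$, and pushes the fixed \jns\ on $\beta\omega\times\beta\omega$ forward; every $f\in C(X\times Y)$ extends over $\beta X\times\beta Y$ by Glicksberg, pulls back to $\beta\omega\times\beta\omega$, and the convergence follows. Without this (or an equivalent device) there is no route from "pseudocompact" to "the pushed-forward measures are weak$^*$ null." Finally, note that once a \jns\ exists you do not need to rebuild the quotient-with-section machinery by hand: the equivalence (1)$\Leftrightarrow$(2) of Theorem \ref{ba-ka1} already delivers the complemented copy of $(c_0)_p$, so that part of your outline is redundant rather than wrong.
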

Consequently, if for infinite Tychonoff spaces $X$ and $Y$ the product $X\times Y$ is  pseudocompact, then $C_{p}(X\times Y)$ contains a complemented copy of $(c_{0})_{p}$, so by the Closed Graph Theorem 
the Banach space  $(C(X\times Y),\|.\|_{\infty})$ contains a
complemented copy of the Banach space $c_{0}$. This provides a
stronger version of Cembranos--Freniche theorem. However, if the
product $X\times Y$ is not pseudocompact, then consistently
$C_p(X\times Y)$ may fail to have the complemented copy of
$(c_0)_p$, because, as our next theorem shows, this  may happen even for the
squares. We refer the reader to the paragraph
before Theorem~\ref{main_ex} for the exact formulation of the
set-theoretic assumption we use in the proof of the next
theorem (and which is called $(\dagger)$ by us). Let us only mention here that this assumption $(\dagger)$ is satisfied if the Continuum Hypothesis or Martin's axiom holds.

\begin{theorem}\label{lastest}
It is consistent that there exists an infinite
pseudocompact space $X$ 
such that  the spaces $C_p(X\times X)$ and  $C_p^*(X\times X)$
do not  contain a complemented copy of $(c_0)_{p}$.
\end{theorem}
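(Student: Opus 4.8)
The plan is to construct, under the set-theoretic assumption $(\dagger)$, a pseudocompact space $X$ whose square $X\times X$ is again pseudocompact (so that Theorem~\ref{solution}.(1) does not immediately obstruct the goal), yet for which the Banach space $C(X\times X)$ has the $\ell_1$-Grothendieck property in the strong sense needed to kill all finitely supported Josefson--Nissenzweig sequences on $X\times X$. By Theorem~\ref{ba-ka1}, the failure of a complemented copy of $(c_0)_p$ in $C_p(X\times X)$ is equivalent to the failure of the \JNP\ for $X\times X$, i.e. to the nonexistence of a sequence $(\mu_n)_n$ of finitely supported signed measures on $X\times X$ with $\|\mu_n\|=1$ and $\mu_n(f)\to 0$ for every $f\in C(X\times X)$; and since $X\times X$ will be pseudocompact, this is also equivalent to $C_p^*(X\times X)$ having no such complemented copy. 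So the entire problem reduces to building $X$ so that \emph{every} weak$^*$-null sequence of finitely (equivalently, countably) supported measures on $X\times X$ is actually norm-null, or at least fails to stay on the unit sphere.

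First I would take $X$ to be a suitable maximal almost disjoint family space, i.e. a $\Psi$-space $\Psi(\aA)=\omega\cup\aA$ over a MAD family $\aA$ on $\omega$, or a closely related construction: such spaces are always pseudocompact, separable, first countable and locally compact, and the points of $\omega$ are isolated and dense. The countable supports of measures on $\Psi(\aA)\times\Psi(\aA)$ will then be governed by the combinatorics of $\aA$. The key point is to use $(\dagger)$ (which, as the excerpt notes, follows from CH or from MA) to perform a transfinite recursion of length $\cc$, enumerating all potential candidate sequences of finitely supported signed measures on $X\times X$ (there are only $\cc$ many such sequences once the underlying countable dense set is fixed, because each measure is determined by a finite rational-or-real packet of data, and each sequence by countably many such packets), and at each step extend the almost disjoint family so as to destroy the current candidate: either some $f\in C_p(X\times X)$ witnesses $\mu_n(f)\not\to 0$, or the norms $\|\mu_n\|$ drop away from $1$. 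Concretely, one arranges that the weights that the $\mu_n$ place on the isolated points of $\omega\times\omega$ get "absorbed" by a newly added pseudo-intersection-type set in $\aA$, on which one can build a continuous function separating the measures; this is the same mechanism by which one forces the $\ell_1$-Grothendieck property, pushed through the product.

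The main technical engine is a diagonalization lemma of the following shape: given a sequence $(\mu_n)_n$ of finitely supported signed measures of norm $1$ on $(\omega\cup\aA_0)\times(\omega\cup\aA_0)$ for a countable partial AD family $\aA_0$, and given that the supports escape to infinity (otherwise a single point gives a non-null test function), one can find an infinite $A\sub\omega$ almost disjoint from all of $\aA_0$ and a continuous $f$ on the resulting larger space with $\limsup_n|\mu_n(f)|>0$. Here $(\dagger)$ is exactly what lets this lemma be iterated $\cc$ times while keeping the AD family maximal at the end (one interleaves the destruction steps with the usual bookkeeping that handles infinite subsets of $\omega$ to guarantee maximality, using the combinatorial strength of $(\dagger)$ to thread both tasks simultaneously — this is where a bare ZFC construction fails and where the precise form of $(\dagger)$, to be stated before Theorem~\ref{main_ex}, matters). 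One must also separately verify that the final space $X$ has pseudocompact square; for $\Psi$-spaces this can be delicate, so the construction should be set up so that $X$ is of a type known (or easily shown) to have pseudocompact powers, e.g. by arranging a stronger "selective"-like property of $\aA$, or by using a different pseudocompact carrier for which squares are automatically pseudocompact.

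The step I expect to be the main obstacle is precisely this simultaneous control: making the almost disjoint (or otherwise pseudocompact) structure rich enough to (a) remain pseudocompact with pseudocompact square, and (b) still admit, for every candidate sequence of measures on the \emph{product}, a continuous function on the product witnessing non-convergence — continuous functions on $X\times X$ are far more constrained than arbitrary pairs of functions on $X$, so the separating function must be built with care, typically as a finite sum of products $g_i\otimes h_i$ of functions supported near the added AD-sets in each coordinate. Balancing the norm bookkeeping (ensuring $\|\mu_n\|$ cannot be rescued to $1$ after the absorption) against the maximality bookkeeping across $\cc$ steps is the heart of the argument, and is where the weak set-theoretic hypothesis $(\dagger)$ is doing its real work.
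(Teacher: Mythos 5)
Your proposal founders on two points before the diagonalization even begins. First, you propose to arrange that $X\times X$ be pseudocompact ``so that Theorem~\ref{solution}.(1) does not immediately obstruct the goal,'' but that theorem says exactly the opposite of what you need: if $X\times X$ is pseudocompact, then $C_p(X\times X)$ \emph{does} contain a complemented copy of $(c_0)_p$ (see also Example~\ref{second}, where a space in \hs\ with pseudocompact square is built precisely to get such a copy). Any example witnessing Theorem~\ref{lastest} must therefore have a non-pseudocompact square; the paper's space does (cf.\ Proposition~\ref{not-pseudocompact}), and the whole reason for introducing the bounded Josefson--Nissenzweig property and Theorem~\ref{bjnp_c_0} is to handle $C_p^*(X\times X)$ separately, because for a non-pseudocompact square $C_p^*\neq C_p$ and the two assertions of the theorem are no longer equivalent --- your reduction of the $C_p^*$ claim to the $C_p$ claim via pseudocompactness collapses. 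Second, a $\Psi$-space over a MAD family cannot serve as the carrier: each $A\in\mathcal A$ is the limit of the sequence of its own points, so $\Psi(\mathcal A)$ contains infinite compact subsets, and then Corollary~\ref{cor_subspaces} already yields a complemented copy of $(c_0)_p$ in $C_p(\Psi(\mathcal A)\times\Psi(\mathcal A))$. The example must be a pseudocompact space with \emph{no} infinite compact subsets, which is why the paper uses Haydon's construction $X=\omega\cup\{u_A:A\in[\omega]^\omega\}\subset\beta\omega$ with the $u_A$ weak $P$-points chosen via $(\dagger)$.

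Beyond these two obstructions, the mechanism in the paper is genuinely different from the ``enumerate all candidate measure sequences and kill each one while extending the AD family'' recursion you sketch. The space is fixed once and for all; one then assumes a bounded Josefson--Nissenzweig sequence on $X\times X$ exists, refines it so that the $n$-th measure is supported on a rectangle $A^1_n\times A^2_n$ of finite subsets of a discrete set of weak $P$-points (Lemma~\ref{refining_jns}), and uses a counting argument (Lemma~\ref{avoid_diagonal}) together with the Rudin--Keisler incompatibility encoded in $(\dagger)$ (Lemma~\ref{RK_compatible}) to produce a single bounded continuous test function forcing all the mass to concentrate on the diagonal $\bigtriangleup_X\cong X$; this contradicts the Grothendieck property of $C(\beta X)=\ell_\infty$. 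The transfinite recursion appears only in verifying that $(\dagger)$ is consistent (Lemma~\ref{suff_for_dag}), not in constructing the space against an enumeration of sequences of measures.
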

The proofs of Theorems \ref{solution} and \ref{lastest} are provided in Sections \ref{section_theorem_solution} and \ref{section_x_x_no_bjns}, respectively. Unfortunately, we do not know whether the negation of the conclusion of Theorem \ref{lastest} may consistently hold. We are also not aware of any model of ZFC where $(\dagger)$, our set-theoretic assumption used to prove Theorem 1.5, fails.

\begin{problem}\label{pro}
Is it consistent that for any infinite  pseudocompact space $X$ the space $C_p(X\times X)$ (respectively,  $C^{*}_p(X\times X)$)  contains a complemented copy of $(c_{0})_p$ ?
\end{problem}

Recall that every countably compact space is pseudocompact, thus the following question seems natural in the context of Theorems \ref{product_compact_jnp} and \ref{lastest}.

\begin{problem}\label{pro_cc}
Is it consistent that there exists an infinite countably compact space $X$ such that the space $C_p(X\times X)$ (respectively,  $C^{*}_p(X\times X)$) does not contain a complemented copy of $(c_0)_p$?
\end{problem}

The methods applied to prove Theorem \ref{lastest} cannot be used to answer Problem \ref{pro_cc} in the affirmative since they produce spaces which are very far from being countably compact. However, recall that there exist countably compact spaces whose squares are not pseudocompact, see e.g. \cite[Example 3.10.9]{En}.

Theorem \ref{solution} has two important applications concerning the following remarkable problem  in $C_p$-theory  posed by Arkhangel'ski, see \cite{arkh2,arkh4}, and the famous Rosenthal--Lacey theorem.

\begin{problem}[Arkhangel'ski] \label{Ar}
Is it true that for every infinite (compact) space $K$ the space  $C_{p}(K)$ is linearly homeomorphic to  $C_{p}(K)\times\mathbb{R}$?
\end{problem}

For a wide class of spaces the answer to Problem \ref{Ar}  is affirmative, e.g. if the space $X$ contains a non-trivial convergent sequence, or $X$ is not pseudocompact (see \cite[Section 4]{arkh4}), yet, in general, the answer is negative  even for compact spaces $X$, see  Marciszewski \cite{marci}. 
It appears however that for finite products of Tychonoff spaces the answer is still positive---the following corollary follows immediately from Theorem \ref{solution}.
\begin{corollary}\label{solution1}
Let $X$ and $Y$ be infinite Tychonoff spaces. Then  $C_{p}(X\times Y)$ is linearly homeomorphic to the product   $C_{p}(X\times Y)\times\mathbb{R}$.
\end{corollary}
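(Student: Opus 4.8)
The plan is to derive Corollary~\ref{solution1} directly from Theorem~\ref{solution}, exploiting the fact that both of the candidate complemented subspaces, $\mathbb{R}^\omega$ and $(c_0)_p$, absorb a factor of $\mathbb{R}$. First I would split into the two cases provided by Theorem~\ref{solution}. In either case we have a topological direct sum decomposition $C_p(X\times Y)\cong Z\times W$, where $Z$ is the complemented copy (so $Z$ is linearly homeomorphic to $\mathbb{R}^\omega$ or to $(c_0)_p$) and $W$ is a closed linear complement.

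The key observation is the Pe\l czy\'nski-type ``decomposition'' trick on the factor $Z$: both $\mathbb{R}^\omega$ and $(c_0)_p$ are linearly homeomorphic to their own product with $\mathbb{R}$. Indeed $\mathbb{R}^\omega\cong\mathbb{R}\times\mathbb{R}^\omega$ is immediate by shifting coordinates, and $(c_0)_p\cong\mathbb{R}\times(c_0)_p$ holds because a sequence converges to $0$ iff its tail (after deleting the first coordinate) does, so the map $(x_n)_n\mapsto(x_0,(x_{n+1})_n)$ is a linear homeomorphism of $(c_0)_p$ onto $\mathbb{R}\times(c_0)_p$; the product topology inherited from $\mathbb{R}^\omega$ is respected by this shift. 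Hence in both cases $Z\cong Z\times\mathbb{R}$ as topological vector spaces. Then
\[
C_p(X\times Y)\cong Z\times W\cong (Z\times\mathbb{R})\times W\cong (Z\times W)\times\mathbb{R}\cong C_p(X\times Y)\times\mathbb{R},
\]
where all the isomorphisms are linear homeomorphisms and we have only used associativity and commutativity of finite products of topological vector spaces. This completes the argument.

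I do not expect any genuine obstacle here: the content is entirely in Theorem~\ref{solution}, and what remains is the routine ``$Z\cong Z\times\mathbb{R}$'' absorption, which is elementary for these two concrete sequence spaces. The only point that requires a moment's care is checking that the coordinate-shift homeomorphism on $(c_0)_p$ is continuous in \emph{both} directions with respect to the product (pointwise) topology rather than the sup-norm topology --- but this is clear since a shift of coordinates is continuous on all of $\mathbb{R}^\omega$ and restricts to a bijection between $(c_0)_p$ and $\mathbb{R}\times(c_0)_p$. One should also note that the complement $W$ in Theorem~\ref{solution} is a genuine topological complement (the projection onto $Z$ is continuous), so that the displayed chain of linear homeomorphisms is legitimate; this is part of what ``complemented copy'' means and is guaranteed by the proof of Theorem~\ref{solution}.
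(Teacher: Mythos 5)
Your argument is correct and is precisely the routine absorption argument the paper has in mind when it says the corollary ``follows immediately'' from Theorem~\ref{solution}: the complemented copy $Z$ (either $\mathbb{R}^\omega$ or $(c_0)_p$) satisfies $Z\cong Z\times\mathbb{R}$ via the coordinate shift, and the direct-sum decomposition then yields $C_p(X\times Y)\cong C_p(X\times Y)\times\mathbb{R}$. No gaps; this matches the intended proof.
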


The famous  Rosenthal--Lacey theorem \cite{Ro}, \cite{La}, see also \cite[Corollary 1]{hagler},  asserts that  for each infinite compact  space $K$ the  Banach space $C(K)$ admits a quotient map onto  $c_0$ or $\ell_{2}$; we refer the reader to a survey paper \cite[Theorem 18]{fe-ka-sliwa} for a detailed discussion on the theorem. The case of $C_p$-spaces remains however open, namely, it is still unknown whether  for every infinite compact space $K$ the space $C_{p}(K)$ admits a quotient map onto an  infinite-dimensional metrizable space, see \cite{kakol-sliwa}. Nevertheless, Theorem \ref{solution} yields the following corollary.

\begin{corollary}\label{separable}  Let $X$ and $Y$ be infinite Tychonoff spaces. Then
\begin{enumerate}
\item   $C_{p}(X\times Y)$ admits a quotient map onto  $\mathbb{R}^\omega$ or  $(c_{0})_{p}$.
\item   $C_{k}(X\times Y)$ admits a quotient map onto a space isomorphic to one of the following spaces: $\mathbb{R}^\omega$, $(c_{0})_{p}$ or $c_{0}$.
\end{enumerate}
\end{corollary}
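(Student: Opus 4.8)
\textbf{Proof plan for Corollary~\ref{separable}.}
The plan is to derive both statements from Theorem~\ref{solution} together with the basic duality between complemented subspaces and quotients. First I would treat part (1), which is essentially immediate: by Theorem~\ref{solution}, $C_p(X\times Y)$ contains a complemented copy $E$ of either $\mathbb{R}^\omega$ or $(c_0)_p$. Write $C_p(X\times Y)=E\oplus F$ as a topological direct sum (linear homeomorphism). Then the projection onto the first coordinate is a continuous linear surjection onto $E$, i.e.\ $C_p(X\times Y)$ admits a quotient map onto $\mathbb{R}^\omega$ or onto $(c_0)_p$. (Alternatively, one may invoke Theorem~\ref{ba-ka1}, equivalence (2)$\Leftrightarrow$(3), in the $(c_0)_p$-case, but the direct argument via the projection covers both cases uniformly.)

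For part (2) the key point is that the compact-open topology on $C(X\times Y)$ is finer than the pointwise topology, so the identity map $C_k(X\times Y)\to C_p(X\times Y)$ is continuous and linear. Hence composing with a quotient map $q\colon C_p(X\times Y)\to Q$ from part~(1) would give a continuous linear surjection $C_k(X\times Y)\to Q$; but to get a \emph{quotient} map (open surjection) one must be more careful, since a composition of an open map with a continuous-but-not-open map need not be open. The natural fix is to analyze the complemented copy $E\subseteq C_p(X\times Y)$ produced in the proof of Theorem~\ref{solution} and observe that the projection $P\colon C_p(X\times Y)\to E$ is given by a concrete formula (averaging/evaluating against the finitely supported measures witnessing the \JNP, or the coordinate functionals in the $\mathbb{R}^\omega$-case) that is \emph{also} continuous as a map $C_k(X\times Y)\to E_k$, where $E_k$ carries the compact-open (equivalently, since $E$ lives on a countable discrete-like index set, the product) topology. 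Then $C_k(X\times Y)=E_k\oplus\ker P$ as locally convex spaces, and the projection onto $E_k$ is the desired quotient map. The third possibility $c_0$ (with its norm topology) enters precisely when $X\times Y$ is pseudocompact: then the complemented $(c_0)_p$ sits inside $C^*(X\times Y)$, the functions involved are uniformly bounded, and on such a bounded set the compact-open topology and the sup-norm topology interact so that $E_k$ is linearly homeomorphic to the Banach space $c_0$; this is where one uses that $C(X\times Y)=C^*(X\times Y)$ for pseudocompact $X\times Y$ and that the copy of $(c_0)_p$ from Theorem~\ref{solution}.(1) is built from a bounded sequence of functions, so that the compact-open and pointwise topologies agree on it but the ambient space $C_k$ supplies a norm-continuous projection.

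Concretely I would organize it as: (i) recall $C_k(X\times Y)\to C_p(X\times Y)$ is a continuous linear bijection; (ii) in the non-pseudocompact case, the complemented $\mathbb{R}^\omega\subseteq C_p(X\times Y)$ is spanned by functions supported on a countable discrete family, and the projection onto it is coordinatewise evaluation, hence compact-open continuous, giving a quotient $C_k(X\times Y)\twoheadrightarrow\mathbb{R}^\omega$; (iii) in the pseudocompact case, run the proof of Theorem~\ref{solution}.(1): it yields a sequence $(\mu_n)$ of finitely supported measures on $X\times Y$ and a sequence $(f_n)$ in $C^*(X\times Y)$, uniformly bounded, with $\mu_n(f_m)=\delta_{nm}$ (or a biorthogonal-type relation), such that $f\mapsto(\mu_n(f))_n$ maps $C_k(X\times Y)$ \emph{onto} $c_0$ continuously and admits a continuous linear right inverse $(a_n)_n\mapsto\sum a_nf_n$ (the series converges uniformly because the $f_n$ have essentially disjoint "large" parts); this exhibits $c_0$ as a complemented subspace of the \emph{Banach space} $C(X\times Y)$ and simultaneously as a quotient of $C_k(X\times Y)$. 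The main obstacle is exactly step (iii): one must verify that the projection constructed inside the proof of Theorem~\ref{solution} is continuous for the compact-open topology (not merely pointwise) and that its range, with the subspace topology from $C_k$, is linearly homeomorphic to $c_0$ rather than to $(c_0)_p$ — i.e.\ that passing from $C_p$ to $C_k$ genuinely upgrades the metrizable complemented subspace from $(c_0)_p$ to the Banach space $c_0$. This hinges on the uniform boundedness of the $f_n$ together with pseudocompactness (so every continuous real-valued function is bounded and the relevant subsets of $X\times Y$ behave like compacta for the purposes of the compact-open topology), and on the Rosenthal-type disjointification already present in the proof of Theorem~\ref{solution}.(1).
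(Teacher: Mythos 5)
Your argument for part (1) is correct and is exactly what the paper does: the projection onto the complemented copy of $\mathbb{R}^\omega$ or $(c_{0})_{p}$ provided by Theorem~\ref{solution} is already a quotient map.

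For part (2) your plan contains a genuine error. You claim that the Banach space $c_0$ arises ``precisely when $X\times Y$ is pseudocompact,'' because uniform boundedness of the functions $f_n$ should upgrade the complemented $(c_{0})_{p}$ to a norm-complemented $c_0$ inside $C_k(X\times Y)$. This is false, and the dichotomy is governed by the presence of infinite \emph{compact} subsets, not by pseudocompactness. A concrete counterexample: take $X=Y$ to be the space of Example~\ref{second}, so that $X\times X$ is pseudocompact but all of its compact subsets are finite; then $C_k(X\times X)=C_p(X\times X)$, and a $C_p$-space can never have a quotient isomorphic to the Banach space $c_0$ (every neighbourhood of zero in $C_p$ contains a finite-codimensional linear subspace, a property inherited by Hausdorff quotients and violently failed by the unit ball of $c_0$). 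The technical step you flag as the ``main obstacle'' indeed fails: continuity of $f\mapsto(\mu_n(f))_n$ from $C_k(X\times Y)$ into norm-$c_0$ would require a single compact set controlling $\sup_n|\mu_n(f)|$, i.e.\ the union of the supports of the $\mu_n$ to sit inside a compact set; but Theorem~\ref{pseudocompact_product_has_JNP} only places these supports in a countable discrete set $D\times E$, which in general has no infinite compact closure. When each compact set meets only finitely many of the supports, the induced topology on the span of the $f_n$ is the product topology, so one gets $(c_{0})_{p}$ again, not $c_0$ --- this is exactly the content of Proposition~\ref{lastcase}(2).

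The paper's actual route, which you would need, is a four-case analysis on whether each factor contains an infinite compact subset and whether the factors are pseudocompact. The transfer mechanism is not a projection formula but the restriction operator onto a closed C-embedded subspace, which is open by Proposition~\ref{restriction_open}: if both factors contain infinite compacta $K$ and $L$, restrict to $C_k(K\times L)$ and invoke Theorem~\ref{cembranos}(2) to obtain $c_0$; if neither does, then $C_k=C_p$ and part (1) applies; if the product is not pseudocompact, $C_k(X\times Y)$ has a complemented $\mathbb{R}^\omega$; and in the mixed case one restricts to $X\times K$ (C-embedded via Glicksberg's theorem) and applies Proposition~\ref{lastcase}(2), obtaining a quotient isomorphic to $(c_{0})_{p}$ --- not $c_0$. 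Your proposal is missing both the case split and the C-embedding/open-restriction tool, and its central claim about when $c_0$ appears is the opposite of the truth.
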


The proofs of Theorem \ref{solution} and its consequences, e.g. Corollary \ref{separable}, are provided in Section \ref{section_theorem_solution}.

%
\medskip

\noindent\textbf{Notation and terminology.}
Our notation and terminology are standard, i.e., we follow monographs of Tkachuk \cite{Tka1} (function spaces), Engelking \cite{En} (general topology), Halmos \cite{Hal74} (measure theory) and Jech \cite{Jec02} (set theory). In particular, we assume that \textbf{all topological spaces we consider are Tychonoff}, that is, completely regular and Hausdorff. 

The cardinality of a set $X$ is denoted by $|X|$. By $\omega$ we denote the first infinite cardinal number, i.e., the cardinality of the space of natural numbers $\mathbb{N}$. Usually we identify $\omega$ with $\mathbb{N}$, so $\omega$ is an infinite countable discrete topological space, thus, e.g., such notions like the \v{C}ech--Stone compactifiactions $\beta\omega$ of $\omega$ have sense. We denote the remainder $\beta\omega\setminus\omega$ of this compactification by $\omega^*$. The continuum, i.e. the cardinality of the real line $\mathbb{R}$, is denoted both by $\mathfrak{c}$ and $2^\omega$. If $X$ is a set and $\kappa$ is a (finite or infinite) cardinal number, then by $[X]^\kappa$ we denote the family of all subsets of $X$ of cardinality $\kappa$; in particular, $[X]^{\omega}$ denotes the family of all infinite countable subsets of $X$. We put $[X]^{<\omega}=\bigcup_{n\in\omega}[X]^n$, so $[X]^{<\omega}$ is the family of all finite subsets of $X$. Finally, $\omega^\omega$ denotes the family of all functions from $\omega$ into $\omega$.

All other necessary and possibly non-standard notions will be defined in relevant places of the text.

\section{Proof of Theorem \ref{solution} and its consequences\label{section_theorem_solution}}

For a space $X$ and a point $x\in X$ let $\delta_x: C_p(X)\to\mathbb{R},\,\,\, \delta_x:f\mapsto f(x),$ be the Dirac measure concentrated at $x$. The linear hull $L_p(X)$ of the set $\{\delta_x:x\in X\}$ in $\mathbb{R}^{C_p(X)}$ can be identified with the dual space of $C_p(X)$ (\cite[Exercises 196--197]{Tka1}). Elements of the space $L_p(X)$ will be called {\em finitely supported signed measures} (or simply {\em signed measures}) on $X$.

Each $\mu\in L_p(X)$ can be uniquely written as a linear combination of Dirac measures $$\mu=\sum_{x\in F}\alpha_x\delta_x$$ for some finite set $F\subset X$ and some non-zero real numbers $\alpha_x$. The set $F$ is called the {\em support} of the signed measure $\mu$ and is denoted by $\supp(\mu)$. The measure $\sum_{x\in F}|\alpha_x|\delta_x$ will be denoted by $|\mu|$ and the real number $$\|\mu\|=\sum_{x\in F}|\alpha_x|$$ coincides with the {\em norm} of $\mu$ in the dual Banach space $C(\beta X)^*$.

A sequence $(\mu_n)_n$ of finitely supported signed measures  on $X$ such that $\|\mu_n\|=1$ for all $n\in \omega$, and $\lim\limits_{n}\mu_n(f) = 0$ for each $f\in C_p(X)$ is called {\em a Josefson--Nissenzweig sequence} or, in short, {\em a \jns} on $X$. A \jns\ $(\mu_n)_n$ is supported on a subset $A$ of a space $X$ if the supports of all measures $\mu_n$ are contained in $A$.

We say that $C_{p}(X)$ {\em has the Josefson--Nissenzweig property} or, in short, \JNP\ if $X$ admits a \jns.

The following proposition was proved in \cite{KSZ}. We provide a brief sketch of the proof for the sake of completeness. 
\begin{proposition}[{\cite[Proposition 11.2]{KSZ}}]\label{beta^2_has_JNP}
The product $\beta\omega\times\beta\omega$ has a \jns\ $(\mu_{n})_{n\in\omega}$ supported on $\omega\times\omega$. Moreover, the supports of $\mu_n$ have pairwise disjoint projections onto each axis.
\end{proposition}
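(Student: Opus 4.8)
The plan is to construct an explicit Josefson--Nissenzweig sequence on $\omega\times\omega$ using a block structure that guarantees both the weak$^*$-convergence against continuous functions on $\beta\omega\times\beta\omega$ and the disjointness of projections. First I would recall that a bounded continuous function on $\omega\times\omega$ need not extend to $\beta\omega\times\beta\omega$, so the key fact to exploit is that $C_p(\beta\omega\times\beta\omega)$ can be identified with $C(\beta\omega\times\beta\omega)$, i.e.\ with the bounded continuous functions on $\omega\times\omega$ that do extend; equivalently, by Stone--\v{C}ech duality, with functions $f\colon\omega\times\omega\to\R$ that are ``jointly $\beta$-continuous.'' The standard trick (already used in \cite{KSZ}) is that such an $f$, when restricted to a suitable infinite ``staircase'' set with disjoint projections, is close to being a function of a single coordinate, so averaging-with-signs over such a set kills it in the limit.

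Concretely, I would set, for each $n\in\omega$, $\mu_n=\frac{1}{2n+2}\sum_{k=0}^{2n+1}(-1)^k\delta_{(a^n_k,\,b^n_k)}$, where the points $(a^n_k,b^n_k)$ are chosen so that all coordinates $a^n_k,b^n_k$ (across all $n$ and $k$) are pairwise distinct natural numbers. Then $\|\mu_n\|=1$ automatically, the supports of the $\mu_n$ are pairwise disjoint with pairwise disjoint projections onto each axis, and it remains only to verify $\mu_n(f)\to 0$ for every $f\in C(\beta\omega\times\beta\omega)$. The verification step is where the real content lies: given such an $f$, one uses that $f$ is uniformly continuous with respect to the (totally bounded) uniformity of $\beta\omega\times\beta\omega$ on $\omega\times\omega$, together with a Ramsey/pigeonhole argument on the chosen grid, to show that for large $n$ the values $f(a^n_k,b^n_k)$ vary little as $k$ runs through a block, so the alternating sum (which would be exactly $0$ if $f$ were constant on the block) is small; dividing by the block length $2n+2$ then drives $\mu_n(f)$ to $0$. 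An alternative and perhaps cleaner route is to invoke the known Josefson--Nissenzweig sequence on $\beta\omega$ itself — e.g.\ take $\nu_n=\frac{1}{2n+2}\sum_{k}(-1)^k\delta_{m^n_k}$ on $\omega$, which is a \jns\ on $\beta\omega$ because $\ell_1$-type averages with alternating signs over growing antichains converge to $0$ on $C(\beta\omega)=\ell_\infty$ — and then ``push'' it into the product by placing the $k$-th atom of the $n$-th measure at $(m^n_k,m'^n_k)$ with the $m'^n_k$ chosen from a disjoint copy of $\omega$; the fact that $C(\beta\omega\times\beta\omega)$ separates points of the two factors but is still controlled on disjoint-support staircases is exactly what makes this work.

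**The hard part will be** the convergence $\mu_n(f)\to 0$ for arbitrary $f\in C(\beta\omega\times\beta\omega)$, because $C(\beta\omega\times\beta\omega)$ is strictly larger than the algebra generated by $C(\beta\omega)\otimes C(\beta\omega)$, so one cannot simply reduce to the one-variable case by a tensor/Stone--Weierstrass argument on the factors. The way around this is to use that $\omega\times\omega$ is $C^*$-embedded nowhere obviously, but a bounded continuous $f$ on $\omega\times\omega$ extends to $\beta\omega\times\beta\omega$ iff it extends separately and ``uniformly,'' and on a set $D=\{(a_k,b_k):k\in\omega\}$ with strictly increasing $a_k$ and $b_k$ the closure $\overline{D}^{\beta\omega\times\beta\omega}$ meets $\omega^*\times\omega^*$ in a way that forces $f\restriction D$ to be asymptotically ``flat in blocks.'' Making this precise — i.e.\ extracting, for each $\varepsilon>0$, an $N$ such that for all $n\ge N$ the oscillation of $f$ over the $n$-th block is below $\varepsilon$ — is the technical heart; I expect the authors handle it either by a direct compactness argument in $\beta\omega\times\beta\omega$ (every subsequence of block-points has a convergent sub-subsequence, and along it $f$ is eventually within $\varepsilon$ of its limit) or by citing the relevant lemma from \cite{KSZ}. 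Once that oscillation estimate is in hand, the bound $|\mu_n(f)|\le \osc_n(f) \to 0$ is immediate, and the ``Moreover'' clause about disjoint projections is built into the construction from the start.
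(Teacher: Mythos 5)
There is a fatal gap: your explicit measures $\mu_n=\frac{1}{2n+2}\sum_{k}(-1)^k\delta_{(a^n_k,\,b^n_k)}$, with all coordinates pairwise distinct, are \emph{not} a \jns\ on $\beta\omega\times\beta\omega$. Since the first coordinates $a^n_k$ are pairwise distinct, define $g\colon\omega\to\{-1,1\}$ by $g(a^n_k)=(-1)^k$ (arbitrary elsewhere); as $\omega$ is $C^*$-embedded in $\beta\omega$, $g$ extends continuously to $\beta\omega$, and $f(x,y)=g(x)$ is continuous on $\beta\omega\times\beta\omega$ with $\mu_n(f)=1$ for every $n$. This also refutes your central claim that continuity forces the values $f(a^n_k,b^n_k)$ to oscillate little over a block: the support points are isolated in $\beta\omega\times\beta\omega$, so $f$ restricted to them is an arbitrary bounded function, and no uniform-continuity or Ramsey argument can control it. Your ``alternative route'' fails for the same reason one step earlier: $\beta\omega$ admits no \jns\ at all ($C(\beta\omega)=\ell_\infty$ is Grothendieck, and the function $g$ above already shows that alternating averages over any infinite subset of $\omega$ do not tend to $0$ on $\ell_\infty$); the product structure is not a decoration but the entire point. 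Finally, the obstruction you identify --- that $C(\beta\omega\times\beta\omega)$ is strictly larger than the algebra generated by products $g(x)h(y)$ --- is not the real difficulty: by Stone--Weierstrass that algebra is uniformly dense, and since $\|\mu_n\|=1$ for all $n$ it suffices to test weak$^*$ convergence on such products; your measures already fail there.

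What the paper does instead, and what any correct proof must do, is take supports that are genuine rectangles rather than graphs: $\mu_n$ lives on $\Omega_n\times\Sigma_n$ with $\Omega_n=\{-1,1\}^n$ and $\Sigma_n=n\times\{n\}$, and $\mu_n=\sum_{s,i}\frac{s(i)}{n2^n}\delta_{(s,i)}$. Then every column $\Omega_n\times\{i\}$ carries signed mass exactly $0$ and every row $\{s\}\times\Sigma_n$ carries total variation only $2^{-n}$, so functions of either single variable are annihilated, and for products one gets the Khintchine-type bound $|\mu_n(g\otimes h)|\le\|g\|_\infty\|h\|_\infty/\sqrt{n}$; density of the products then finishes the verification (the paper defers these details to \cite{KSZ}). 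The two-dimensional cancellation built into the rectangle is exactly what a one-to-one ``staircase'' support cannot provide.
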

\begin{proof}
For every $n\in\mathbb{N}$ put $\Omega_n=\{-1,1\}^n$ and $\Sigma_n=n\times\{n\}$. To simplify the notation, we will usually write $i\in\Sigma_n$ instead of $(i,n)\in\Sigma_n$.

Define  $\Omega=\bigcup_{n\in\omega}\Omega_n$ and $\Sigma=\bigcup_{n\in\omega}\Sigma_n$, and endow these two sets with the discrete topology. This enables us to look at the the product space $\Omega\times\Sigma$ as a countable union of pairwise disjoint discrete rectangles $\Omega_k\times\Sigma_m$ of size $m2^k$---the rectangles $\Omega_n\times\Sigma_n$, lying along the diagonal, will bear a special meaning, namely, they will be the supports of measures from a \jns\,\,$(\mu_{n})_{n\in\omega}$ on the space $\beta\Omega\times\beta\Sigma$ defined as follows:
$$\mu_n=\sum_{\substack{s\in\Omega_n,i\in\Sigma_n}}\frac{s(i)}{n2^n}\delta_{(s,i)},\,\,\,n\in\omega.$$ It turns out that the  sequence $(\mu_{n})_{n\in\omega}$  defined above is a \jns,  see  \cite[Section 11]{KSZ} for details. Note  that $\omega$ is homeomorphic to both $\Omega$ and $\Sigma$, so $\beta\omega$, $\beta\Omega$ and $\beta\Sigma$ are mutually homeomorphic.
Consequently, $\beta\omega\times\beta\omega$ has the \jns\ with the required properties.
\end{proof}

In \cite[Theorem 11.3]{KSZ} this result was used to prove that, for any infinite compact spaces $K,L$, the space  $C_p(K\times L)$ has the \JNP. By a  modification of this argument we can obtain the following strengthening:

\begin{theorem}\label{pseudocompact_product_has_JNP}
For any infinite spaces $X,Y$, if the product $X\times Y$ is pseudocompact, then it admits a \jns\ $(\mu_{n})_{n\in\omega}$. Moreover, we can require that $(\mu_{n})_{n\in\omega}$ is supported on the product $D\times E$, where $D\subset X$ and $E\subset Y$ are countable discrete, and the supports of $\mu_n$ have pairwise disjoint projections onto each axis.
\end{theorem}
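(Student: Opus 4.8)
The plan is to reduce the general pseudocompact case to Proposition 2.3 (the case $\beta\omega\times\beta\omega$) by finding, inside $X$ and $Y$ respectively, countable discrete subsets $D=\{d_k:k\io\}$ and $E=\{e_j:j\io\}$ whose closures in $X$ (resp.\ $Y$) are copies of $\beta\omega$, and then transporting the measures $\mu_n$ of Proposition 2.3 along these embeddings. The first step is therefore to locate such sets. Here is where pseudocompactness of the \emph{product} enters: if $X$ were not pseudocompact we could not expect $X$ alone to contain a copy of $\beta\omega$ (e.g.\ $X=\omega$), but a classical fact of Glicksberg-type reasoning is that if $X\times Y$ is pseudocompact then both factors are pseudocompact and, more importantly, one can find a countable discrete $D\sub X$ that is $C^*$-embedded (equivalently, whose closure in $X$, or in $\beta X$, is $\beta\omega$) and similarly $E\sub Y$; in fact one wants $D\times E$ to be $C^*$-embedded in $X\times Y$, so that $\overline{D\times E}$ (taken in $\beta(X\times Y)$) is a copy of $\beta\omega\times\beta\omega$. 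This is the main obstacle and the technical heart of the argument: producing a single countable discrete rectangle $D\times E$ that is $C^*$-embedded. I expect to do this by an inductive construction choosing $d_k,e_k$ alternately, at each stage using pseudocompactness of $X\times Y$ to ensure the partial rectangle stays discrete and that the relevant functions separating points extend, much as in the proof that pseudocompact spaces contain copies of $\beta\omega$ or in the Cembranos--Freniche-style arguments; alternatively one cites the corresponding lemma from \cite{KSZ} and adapts its proof from compact $X,Y$ to pseudocompact $X,Y$, which is the ``modification of the argument'' alluded to just before the statement.

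Granting such $D$ and $E$, the second step is routine. Write $D=\{d_i:i\io\}\cong\omega$ and $E=\{e_i:i\io\}\cong\omega$, with the closures of $D$ in $X$ and of $E$ in $Y$ being $C^*$-embedded, so that every bounded continuous function on $D$ (resp.\ $E$) extends to $X$ (resp.\ $Y$), and moreover every bounded continuous function on $D\times E$ extends continuously to all of $X\times Y$ (this is exactly the $C^*$-embeddedness of the rectangle). Define, in the notation of Proposition 2.3 after identifying $\Omega_n\times\Sigma_n$ with a finite subset of $D\times E$,
$$\mu_n=\sum_{s\in\Omega_n,\ i\in\Sigma_n}\frac{s(i)}{n2^n}\,\delta_{(d_{\langle s\rangle},e_{\langle i\rangle})},\qquad n\io,$$
where $\langle\cdot\rangle$ is a fixed bijective coding of $\bigsqcup_n\Omega_n$ into $\omega$ and of $\bigsqcup_n\Sigma_n$ into $\omega$, so that the supports of the $\mu_n$ have pairwise disjoint projections onto each axis (inherited from the disjointness of the $\Omega_n$ and of the $\Sigma_n$). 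Clearly $\|\mu_n\|=\sum_{s,i}|s(i)|/(n2^n)=n2^n/(n2^n)=1$.

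The third step is to verify $\mu_n(f)\to 0$ for every $f\in C_p(X\times Y)$. Fix such an $f$; its restriction $f\rstr(D\times E)$ need not be bounded on the infinite set $D\times E$, but on each \emph{fixed} finite rectangle it is, and the key point is that $f\rstr(\overline{D}\times\overline{E})$ is a continuous function on a copy of $\beta\omega\times\beta\omega$, hence bounded there; restricting $f$ to $\overline{D\times E}\cong\beta\omega\times\beta\omega$ we get a function $g\in C(\beta\omega\times\beta\omega)$ with $\mu_n(f)=\tilde\mu_n(g)$, where $\tilde\mu_n$ are precisely the measures of Proposition 2.3 under the identification. Since that proposition asserts $(\tilde\mu_n)_n$ is a \jns\ on $\beta\omega\times\beta\omega$, we get $\mu_n(f)=\tilde\mu_n(g)\to 0$. (One subtlety: I should make sure the identification $\overline{D\times E}\cong\beta\omega\times\beta\omega$ is legitimate — this is exactly what $C^*$-embeddedness of $D\times E$ in $X\times Y$ buys us, since then $\overline{D\times E}$ in $\beta(X\times Y)$ equals $\beta(D\times E)=\beta(\omega\times\omega)=\beta\omega\times\beta\omega$; the last equality uses that $\omega\times\omega$ is again a countable discrete space, so $\beta(\omega\times\omega)\cong\beta\omega$, but under the product coordinates it carries the required rectangle structure as in Proposition 2.3.) This completes the proof modulo Step 1, which, to reiterate, is the only nontrivial part and is the place where the hypothesis that $X\times Y$ (not merely $X$ and $Y$ separately) is pseudocompact is essential.
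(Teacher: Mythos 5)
Your overall strategy---transporting the measures of Proposition \ref{beta^2_has_JNP} from $\beta\omega\times\beta\omega$ into $X\times Y$ via countable discrete sets $D\subset X$, $E\subset Y$---is the right one, but the way you set up the transport contains a genuine error that makes your Step 1 unachievable and your Step 3 incoherent. You ask for $D$ and $E$ whose closures are copies of $\beta\omega$ (equivalently, for $C^*$-embedded countable discrete subsets). Already for $X=Y=[0,1]$ no such $D$ exists: the closure of any countable subset of $[0,1]$ is a compact metric space of cardinality at most $\mathfrak{c}$, whereas $|\beta\omega|=2^{\mathfrak{c}}$; yet the theorem certainly covers this case. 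Worse, your stated goal that the \emph{rectangle} $D\times E$ be $C^*$-embedded in $X\times Y$ is self-defeating: if it were, then $\clo_{\beta(X\times Y)}(D\times E)$ would equal $\beta(D\times E)\cong\beta(\omega\times\omega)\cong\beta\omega$, which is \emph{not} homeomorphic to $\beta\omega\times\beta\omega$ (your parenthetical remark conflates these two spaces), and moreover every bounded function on the discrete set $D\times E$ would extend continuously to $X\times Y$; testing against the function equal to $\sgn\nu_n(\{z\})$ on $\supp(\nu_n)$ and $0$ elsewhere would give $\nu_n(f)=1$ for all $n$. So $C^*$-embeddedness of the rectangle is both impossible in a pseudocompact product and fatal to the desired conclusion.

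The fix, which is what the paper does, is to abandon embeddings entirely. Take \emph{arbitrary} countable discrete $D\subset X$ and $E\subset Y$ with bijections $\varphi:\omega\to D$ and $\psi:\omega\to E$, and let $\Phi:\beta\omega\to\beta X$ and $\Psi:\beta\omega\to\beta Y$ be their Stone extensions---merely continuous maps, injective only on $\omega$, with no claim that their images are copies of $\beta\omega$. Push the measures $\mu_n$ of Proposition \ref{beta^2_has_JNP} forward along $\Theta=\Phi\times\Psi$ to obtain measures $\nu_n$ supported on $D\times E$. The point where pseudocompactness of the product enters is Glicksberg's theorem: $\beta(X\times Y)=\beta X\times\beta Y$, so every $f\in C(X\times Y)$ extends to a continuous $F$ on $\beta X\times\beta Y$, whence $\nu_n(f)=\mu_n(F\circ\Theta)\to 0$ because $F\circ\Theta\in C(\beta\omega\times\beta\omega)$. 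Your proposal never invokes Glicksberg's theorem, and without it (or the unobtainable embeddings) there is no way to guarantee that the $\nu_n$ are tested only against functions that pull back to members of $C(\beta\omega\times\beta\omega)$, which is exactly the class Proposition \ref{beta^2_has_JNP} handles.
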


\begin{proof}
Let $D$ and $E$ be  countable discrete subsets of $X$ and $Y$, respectively. Let $\varphi: \omega\to D$ and $\psi: \omega\to E$ be bijections. By the Stone Extension Property of $\beta\omega$, there are continuous maps $\Phi: \beta\omega\to \beta X$ and $\Psi: \beta\omega\to \beta Y$ such that  $\Phi\rstr  \omega=\varphi$ and $\Psi\rstr  \omega=\psi$.

We denote by $\Theta$ the product map $$\Phi\times\Psi: \beta\omega\times\beta\omega \to \beta X\times \beta Y.$$ Clearly $\Theta$ maps $\omega\times \omega$ injectively into $X\times Y$.

Let $(\mu_n)_{n\in\omega}$ be a \jns\ of measures on $(\beta\omega)^2$ supported on $\omega^2$, given by Proposition \ref{beta^2_has_JNP}. For each $n\in\omega$, we consider the image of $\mu_n$ under $\Theta$, i.e., the measure $\nu_n$ on $X\times Y$ defined as follows:
\[\nu_n=\sum_{z\in\supp(\mu_n)}\mu_n\big(\{z\}\big)\cdot\delta_{\Theta(z)},\]
it follows that $\|\nu_n\|=1$ and $\supp(\nu_n)$ is finite. We will show that the sequence $(\nu_n)$ converges to $0$ on every $f\in C(X\times Y)$ which will demonstrate that $X\times Y$ admits a \jns.

Recall that by Glicksberg's theorem \cite[Exercise 3.12.21(c)]{En}, pseudocompactness of $X\times Y$ implies that $\beta X\times \beta Y$ is the \v Cech--Stone  compactification of $X\times Y$, i.e., every continuous function on $X\times Y$ is continuously extendable over  $\beta X\times \beta Y$.

Fix $f\in C(X\times Y)$ and let $F$ be its continuous extension over  $\beta X\times \beta Y$. Then the composition $F\circ\Theta$ is a continuous function on $\beta\omega\times\beta\omega$ and $\nu_n(f) = \mu_n(F\circ \Theta)$ for each $n\in\omega$. Therefore $\lim_{n} \nu_n(f) = 0$.

The additional properties of the supports of the measures $\nu_n$ follow easily from the definition of $\nu_n$ and the corresponding properties of supports of the measures $\mu_n$.
\end{proof}

We are in the position to prove the main result of this paper, Theorem \ref{solution}.

\begin{proof}[{\normalfont\textbf{Proof of Theorem \ref{solution}}}]
If the space $X\times Y$ is pseudocompact, then by the above theorem and Theorem \ref{ba-ka1} the space $C_p(X\times Y)$ has a complemented copy of $(c_0)_{p}$.

On the other hand, it is well known that if a space $X$ is not pseudocompact, then $C_p(X)$ has a complemented copy of $\mathbb{R}^\omega$, cf. \cite[Section 4]{arkh4} or \cite[Theorem 14]{ka-sa}. This completes the proof of part (2) of Theorem \ref{solution}.
\end{proof}
\medskip

Observe that if a subspace $Y$ of a space $X$ has a \jns, then this sequence is also a \jns\ on $X$. By Theorem \ref{ba-ka1}, it follows that if $C_{p}(Y)$ contains a complemented copy of $(c_{0})_{p}$, then $C_p(X)$ also contains such a copy. Therefore Theorem \ref{solution}.(1) immediately gives the following

\begin{corollary}\label{cor_subspaces}
If a space $Z$ contains a topological copy of a pseudocompact product $S\times T$ of infinite spaces $S$ and $T$, then  $C_{p}(Z)$ contains a complemented copy of $(c_{0})_{p}$. In particular, if spaces $X$ and $Y$ contain infinite compact subsets, then $C_{p}(X\times Y)$ contains a complemented copy of $(c_{0})_{p}$.
\end{corollary}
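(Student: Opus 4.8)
The plan is to reduce the corollary to Theorem~\ref{pseudocompact_product_has_JNP} together with Theorem~\ref{ba-ka1}, using the observation recorded just above the statement, that a \jns\ on a subspace is still a \jns\ on the whole space. First I would fix a topological embedding of $S\times T$ into $Z$ and identify $S\times T$ with its image $W\subseteq Z$, taken with the subspace topology. Since $S$ and $T$ are infinite and $S\times T$ is pseudocompact, Theorem~\ref{pseudocompact_product_has_JNP} provides a \jns\ $(\mu_n)_n$ on $S\times T$, and hence on $W$: each $\mu_n$ is a finitely supported signed measure on $W$ with $\|\mu_n\|=1$ and $\mu_n(g)\to 0$ for every $g\in C_p(W)$.

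Next I would promote $(\mu_n)_n$ to a \jns\ on $Z$. Each $\mu_n$, being a finitely supported signed measure on $W\subseteq Z$, is also a finitely supported signed measure on $Z$ with the same norm $1$. For every $f\in C_p(Z)$ the restriction $f\rstr W$ belongs to $C_p(W)$, and since $\supp(\mu_n)\subseteq W$ we have $\mu_n(f)=\mu_n(f\rstr W)\to 0$. Thus $(\mu_n)_n$ is a \jns\ on $Z$, so $C_p(Z)$ has the \JNP, and by the equivalence (1)$\Leftrightarrow$(2) of Theorem~\ref{ba-ka1} the space $C_p(Z)$ contains a complemented copy of $(c_0)_p$.

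For the ``in particular'' clause, suppose $S\subseteq X$ and $T\subseteq Y$ are infinite compact subspaces. Then $S$ and $T$ are infinite spaces, the product $S\times T$ is compact and therefore pseudocompact, and $S\times T$ is (canonically homeomorphic to) a topological subspace of $X\times Y$. Applying the first part with $Z=X\times Y$ gives a complemented copy of $(c_0)_p$ in $C_p(X\times Y)$.

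Since every ingredient is already in place, I do not expect a genuine obstacle here; the only point requiring a moment's care is that ``topological copy'' must be read as a subspace in the subspace topology, so that the restriction map $C_p(Z)\to C_p(W)$ is well defined and the identity $\mu_n(f)=\mu_n(f\rstr W)$ is legitimate. With that understood, the argument is a routine transfer of the \jns\ along the embedding.
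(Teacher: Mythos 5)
Your proposal is correct and follows essentially the same route as the paper: the paper's proof is exactly the observation that a \jns\ on a subspace is a \jns\ on the ambient space (your ``promotion'' step), combined with Theorem~\ref{ba-ka1} and the pseudocompact-product case of Theorem~\ref{solution}, which in turn rests on Theorem~\ref{pseudocompact_product_has_JNP}. The only cosmetic difference is that you invoke Theorem~\ref{pseudocompact_product_has_JNP} directly rather than citing Theorem~\ref{solution}.(1), which amounts to the same chain of reasoning.
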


Note that there exist infinite spaces $X$ without infinite compact subsets such that the square $X\times X$ is pseudocompact, see Example \ref{second}.
\medskip

Recall that a subset $B$ of a locally convex space $E$ is \emph{bounded}  if for every neighbourhood of zero $U$ in $E$ there exists a scalar $\lambda >0$ such that $\lambda B\subset U.$

Note the following fact connected with the next Corollary \ref{do-ka}.
\begin{lemma}\label{pseudo}
For a space $X$ the following assertions are equivalent:
\begin{enumerate}
\item The space  $C_{k}(X)$ is covered by a sequence of bounded sets.
\item The space $C_{p}(X)$ is covered by a sequence of bounded sets.
\item   $X$  is pseudocompact.
\end{enumerate}
\end{lemma}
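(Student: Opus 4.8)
The plan is to establish the equivalences by a cycle, say $(3)\Rightarrow(1)\Rightarrow(2)\Rightarrow(3)$, exploiting that the identity map $C_p(X)\to C_k(X)$ is continuous (so the compact-open topology is finer), which makes $(1)\Rightarrow(2)$ completely formal: a set bounded in the finer topology $C_k(X)$ is \emph{a fortiori} bounded in the coarser topology $C_p(X)$, since every $C_p$-neighbourhood of zero is a $C_k$-neighbourhood of zero. Thus the only real content lies in $(3)\Rightarrow(1)$ and $(2)\Rightarrow(3)$.

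For $(3)\Rightarrow(1)$, recall that $X$ is pseudocompact precisely when every $f\in C(X)$ is bounded, i.e. $C(X)=C^*(X)$. I would then write $C_k(X)=\bigcup_{n\in\omega} B_n$, where $B_n=\{f\in C(X):\|f\|_\infty\le n\}$ is the closed ball of radius $n$ in the sup-norm. Each $B_n$ is bounded in $C_k(X)$: given a basic neighbourhood of zero $W(K,\varepsilon)=\{g:\sup_{x\in K}|g(x)|<\varepsilon\}$ determined by a compact $K\subseteq X$ and $\varepsilon>0$, one has $\lambda B_n\subseteq W(K,\varepsilon)$ for any $\lambda<\varepsilon/n$, and this does not even use compactness of $K$. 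So pseudocompactness gives the covering in (1).

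For $(2)\Rightarrow(3)$, I would argue contrapositively: if $X$ is not pseudocompact, pick $f\in C(X)$ unbounded, and use it to build a continuous linear surjection (or at least a continuous linear map with unbounded-on-every-$C_p$-bounded-set image) onto $\mathbb{R}^\omega$, or more directly show $C_p(X)$ cannot be a countable union of bounded sets. The cleanest route: an unbounded $f$ yields a sequence of points $x_n$ with $|f(x_{n+1})|>|f(x_n)|+1$ whose closure is (by Tychonoff-ness and unboundedness of $f$) a copy of a discrete $C$-embedded countable set, hence $C_p(X)$ admits $\mathbb{R}^\omega$ as a continuous linear image (restriction to $\{x_n:n\in\omega\}$ is onto $\mathbb{R}^\omega$, since $\{x_n\}$ is $C$-embedded as a discrete set carrying an unbounded function); and $\mathbb{R}^\omega$ is not $\sigma$-bounded (a Baire-category / Kuratowski argument: a countable union of bounded, hence nowhere dense, sets cannot cover the Baire space $\mathbb{R}^\omega$, because in $\mathbb{R}^\omega$ a bounded set has empty interior). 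Pulling this back, if $C_p(X)$ were covered by countably many bounded sets, so would be its continuous linear image $\mathbb{R}^\omega$, a contradiction.

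The main obstacle is the careful handling in $(2)\Rightarrow(3)$: one must verify that the chosen countable set of points is $C$-embedded (so that the restriction map onto $\mathbb{R}^\omega$ is genuinely surjective) and that continuous linear images of $\sigma$-bounded spaces are $\sigma$-bounded (immediate, since linear continuous maps send bounded sets to bounded sets) together with the fact that $\mathbb{R}^\omega$ is not $\sigma$-bounded. The $C$-embedding step is the delicate one: it is exactly here that non-pseudocompactness is used, since a discrete sequence on which some continuous function is unbounded is locally finite and $C$-embedded by standard arguments (e.g. one can separate it from a suitable zero-set). An alternative that sidesteps $C$-embedding is to quote directly that non-pseudocompact $X$ has $C_p(X)$ containing a complemented copy of $\mathbb{R}^\omega$ (as already invoked in the proof of Theorem~\ref{solution}.(2)), and then use that $\mathbb{R}^\omega$ is not $\sigma$-bounded; I would likely take this shortcut to keep the proof brief.
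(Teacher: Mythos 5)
Your proposal is correct and, in the shortcut form you say you would actually adopt, coincides with the paper's own proof: $(1)\Rightarrow(2)$ is formal since the compact-open topology refines the pointwise one, $(3)\Rightarrow(1)$ uses the sup-norm balls $nS$, and $(2)\Rightarrow(3)$ invokes the complemented copy of $\mathbb{R}^\omega$ in $C_p(X)$ for non-pseudocompact $X$ together with the fact that $\mathbb{R}^\omega$ is not a countable union of bounded sets. The only (harmless) divergence is in the justification of that last fact: the paper combines the Baire category theorem with Day's theorem that a locally convex space with a bounded neighbourhood of zero is normable, whereas you observe directly that bounded subsets of $\mathbb{R}^\omega$ are nowhere dense (closures of bounded sets are bounded, and every nonempty open set, restricting only finitely many coordinates, is unbounded), which is if anything more elementary.
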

\begin{proof}
(1) $\Rightarrow$ (2) is clear, since the compact-open topology is stronger than the pointwise one of $C(X)$.

(2) $\Rightarrow$ (3): Assume $C_{p}(X)$ is  covered by a sequence of bounded sets  but  $X$ is not psudocompact. Then  $C_{p}(X)$ contains a complemented copy of $\mathbb{R}^{\omega}$. On the other hand,  $\mathbb{R}^{\omega}$ cannot be covered by a sequence $(S_{n})_{n\in\omega}$ of bounded sets. Indeed, we may assume that each $S_{n}$ is absolutely convex and closed and $S_{n}\subset S_{n+1}$ for all $n\in\omega$. By the Baire theorem some $S_{n}$ is a neighbourhood of zero in  $\mathbb{R}^{\omega}$. Consequently $\mathbb{R}^{\omega}$ must be a normed space by a theorem of Day, see \cite[Proposition 6.9.4]{Jarchow}, a contradiction (since  $\mathbb{R}^{\omega}$ is not normable).

(3) $\Rightarrow$ (1): If $X$ is pseudocompact and $$S=\{f\in C(X): |f(x)|\leq 1, x\in X\},$$ then the sequence of bounded sets $(nS)_{n}$ in the compact-open topology covers $C_{k}(X)$.
\end{proof}

\begin{remark}\label{remark_pseudo}
Since the image of a bounded subset of a topological vector space under a continuous linear operator is bounded, from the above lemma we can easily deduce that if $T: C_k(X)\to C_k(Y)$ ($T: C_p(X)\to C_p(Y)$) is a linear continuous surjection and the space $X$ is pseudocompact, then $Y$ is also pseudocompact. This fact for the pointwise topology is well known, cf.\ \cite[Proposition 6.8.6]{vM2}.
\end{remark}

Since the pseudocompactness is not  transferred   even to  finite products (see \cite{En}),  there exist spaces $X$ and $Y$ such that both  $C_{k}(X)$ and $C_{k}(Y)$ are covered by a sequence of bounded sets but $C_{k}(X\times Y)$ lacks this property, by Lemma \ref{pseudo}.
\medskip

A sequence $(S_{n})_{n\in\omega}$ of bounded sets in a locally convex space $E$ is \emph{fundamental} if every bounded set in $E$ is contained in some set $S_{n}$. 
By Warner \cite{warner}, the space $C_{k}(X)$ admits a fundamental sequence of bounded sets if and only if the following condition  holds:

\begin{itemize}
\item[($*$)] \emph{Given any sequence $(G_{n})_{n\in\omega}$ of pairwise disjoint  non-empty open subsets of $X$ there is a compact set $K\subset X$ such that $\{n\in\omega:K\cap G_{n}\neq\emptyset\}$ is infinite.}
\end{itemize}

This characterization easily implies that if the space $C_{k}(X)$ has a fundamental sequence of bounded sets, and the space $X$ is infinite, then $X$ contains an infinite compact subspace. Therefore, by Corollary \ref{cor_subspaces} we obtain

\begin{corollary}\label{do-ka}
Let $X$ and $Y$ be two  infinite spaces. If  $C_{k}(X)$ and  $C_{k}(Y)$ admit fundamental sequences of bounded sets, then $C_{p}(X\times Y)$ contains  a complemented copy of   $(c_{0})_{p}$.
\end{corollary}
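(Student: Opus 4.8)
The plan is to combine three ingredients already available in the excerpt: Warner's characterization ($*$) of when $C_k(X)$ has a fundamental sequence of bounded sets, the elementary observation that ($*$) forces the presence of an infinite compact subset, and Corollary~\ref{cor_subspaces}, which turns the existence of infinite compact subsets in both factors into a complemented copy of $(c_0)_p$ inside $C_p(X\times Y)$.

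First I would make the reduction to infinite compact subsets explicit. Suppose $C_k(X)$ admits a fundamental sequence of bounded sets; by Warner's theorem this is equivalent to ($*$). I claim $X$ contains an infinite compact subspace. Since $X$ is infinite (and Tychonoff, hence Hausdorff and with no isolated-point restriction), either $X$ has a non-isolated point, in which case one can extract a sequence of pairwise disjoint non-empty open sets $(G_n)_n$ "converging" toward such a point, or $X$ is infinite discrete; in the discrete case take $G_n=\{x_n\}$ for distinct points $x_n$. In either case one produces an infinite family $(G_n)_n$ of pairwise disjoint non-empty open sets, and ($*$) yields a compact $K\subseteq X$ meeting infinitely many of them. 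Picking one point from $K\cap G_n$ for infinitely many $n$ gives an infinite subset of $K$; its closure in $K$ is an infinite closed subset of the compact set $K$, hence itself an infinite compact subspace of $X$. (With a little more care one can even arrange that $K$ together with the chosen points is an infinite compact space; but for the conclusion we only need that $X$ possesses \emph{some} infinite compact subset.)

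Next, apply the same argument to $Y$: since $C_k(Y)$ also admits a fundamental sequence of bounded sets, $Y$ contains an infinite compact subspace as well. Now invoke the second assertion of Corollary~\ref{cor_subspaces}: since both $X$ and $Y$ contain infinite compact subsets, $C_p(X\times Y)$ contains a complemented copy of $(c_0)_p$. This finishes the proof.

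The main (and essentially only) obstacle is the purely topological step ``($*$) $\Rightarrow$ $X$ has an infinite compact subset,'' since everything else is a direct citation. The excerpt already asserts that ``this characterization easily implies'' this fact, so I would spell out the two-case argument above: the delicate point is producing the infinitely many pairwise disjoint open sets $(G_n)_n$ in the non-discrete case — one uses regularity of $X$ to shrink neighbourhoods of a non-isolated accumulation point and pass to a disjoint family — after which ($*$) and compactness of the resulting $K$ do the rest. No genuinely hard analysis is involved; the work is bookkeeping with the definitions of pairwise disjoint open families and of compactness.
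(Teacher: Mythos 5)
Your proposal is correct and follows exactly the paper's route: Warner's condition $(*)$ forces each of $X$ and $Y$ to contain an infinite compact subspace, and then the ``in particular'' clause of Corollary~\ref{cor_subspaces} gives the complemented copy of $(c_0)_p$. The only difference is that you spell out the step the paper dismisses as ``easily implies'' (producing infinitely many pairwise disjoint non-empty open sets via regularity and feeding them into $(*)$), and your argument for that step is sound.
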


\begin{remark}
Note that if  $X$ is a pseudocompact space without infinite compact subsets (cf.\ Section 3), then by Warner's characterization, $C_{k}(X)$ does not have a fundamental sequence of bounded sets, although it is covered by a sequence of bounded sets, by Lemma \ref{pseudo}.

\end{remark}
Theorem  \ref{solution} applies also to get the following
\begin{corollary}\label{maps}
Let $X$ be an infinite pseudocompact  space such that $C(\beta X)$ is a Grothendieck space. Then for no infinite spaces $Y$ and $Z$ does  exist a continuous linear surjection from $C_{p}(X)$ onto the space  $C_{p}(Y\times Z)$.
\end{corollary}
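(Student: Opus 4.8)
The plan is to argue by contradiction: if $C_p(X)$ admitted a continuous linear surjection onto some $C_p(Y\times Z)$ with $Y$ and $Z$ infinite, then Theorem~\ref{solution}.(1) would force a complemented copy of $(c_0)_p$ into $C_p(X)$, and I will show that this is incompatible with $C(\beta X)$ being a Grothendieck space.

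So assume that for some infinite spaces $Y$ and $Z$ there is a continuous linear surjection $T\colon C_p(X)\to C_p(Y\times Z)$. Since $X$ is pseudocompact and $T$ is a continuous linear surjection onto $C_p(Y\times Z)$, Remark~\ref{remark_pseudo} shows that $Y\times Z$ is pseudocompact. As $Y$ and $Z$ are infinite, Theorem~\ref{solution}.(1) then gives a complemented copy of $(c_0)_p$ in $C_p(Y\times Z)$, and the corresponding projection is a continuous linear surjection $C_p(Y\times Z)\to(c_0)_p$. Composing it with $T$ produces a continuous linear surjection $C_p(X)\to(c_0)_p$, i.e.\ condition~(4) of Theorem~\ref{ba-ka1} holds for $C_p(X)$; by that theorem $C_p(X)$ then also satisfies condition~(1), that is, $X$ admits a \jns.

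It remains to contradict the Grothendieck property of $C(\beta X)$. Since $X$ is a (dense) subspace of the compact space $\beta X$ and, by the observation preceding Corollary~\ref{cor_subspaces}, a \jns\ on a subspace remains a \jns\ on the whole space, $\beta X$ admits a \jns\ as well; hence $C_p(\beta X)$ has the \JNP, and by Theorem~\ref{ba-ka1} it contains a complemented copy of $(c_0)_p$. Because $\beta X$ is compact, the characterisation from \cite{KSZ} recalled in the introduction then says that the Banach space $C(\beta X)$ does not have the $\ell_1$-Grothendieck property, and a fortiori it is not Grothendieck --- contradicting the hypothesis. This completes the proof.

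I do not expect a serious obstacle here: the argument is essentially a chain of implications through Theorem~\ref{solution}, Remark~\ref{remark_pseudo}, Theorem~\ref{ba-ka1}, the observation on passing a \jns\ to a larger space, and the $\ell_1$-Grothendieck characterisation of \cite{KSZ}. The only point asking for a moment's attention is the passage from $C_p(X)$ to $C_p(\beta X)$, which is immediate once one recalls that a finitely supported measure on $X$ is the same object on $\beta X$ with the same norm, and that for pseudocompact $X$ restriction $g\mapsto g\rstr X$ identifies $C(\beta X)$ with $C(X)$. One can also finish without mentioning the $\ell_1$-Grothendieck property: since $X$ is pseudocompact every function in the copy of $(c_0)_p$ inside $C_p(X)$ is uniformly bounded, so a Closed Graph Theorem argument of the type indicated before Theorem~\ref{product_compact_jnp} upgrades this to a complemented copy of the Banach space $c_0$ in $(C(X),\|\cdot\|_\infty)=C(\beta X)$, which by Cembranos' theorem \cite{Ce} cannot then be Grothendieck.
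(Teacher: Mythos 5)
Your proposal is correct and follows essentially the same route as the paper: pass pseudocompactness of $Y\times Z$ through $T$ via Remark~\ref{remark_pseudo}, apply Theorem~\ref{solution}.(1) and Theorem~\ref{ba-ka1} to get a continuous linear surjection of $C_p(X)$ onto $(c_0)_p$ and hence a complemented copy of $(c_0)_p$ in $C_p(X)$, and then contradict the Grothendieck property of $C(\beta X)$. The paper finishes exactly as in your final remark (Closed Graph Theorem upgrading the copy to a complemented $c_0$ in $C(\beta X)$); your detour through a \jns\ on $\beta X$ and the $\ell_1$-Grothendieck characterisation is an equally valid, only cosmetically different, ending.
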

\begin{proof}
Assume that for some infinite spaces $Y$ and $Z$ there exists a continuous linear  surjection $T:C_{p}(X)\rightarrow C_{p}(Y\times Z)$. Then the space $Y\times Z$ is pseudocompact, see Remark \ref{remark_pseudo}.
By Theorem \ref{solution}  $C_{p}(Y\times Z)$ contains a complemented copy of $(c_{0})_{p}$. Hence $C_{p}(X)$  maps onto $(c_{0})_{p}$ by a continuous linear map, and then  by  Theorem \ref{ba-ka1} the space $C_{p}(X)$ has a complemented copy of $(c_{0})_{p}$. This  implies that  $C(\beta X)$ contains a complemented copy of $c_{0}$, a contradiction with the Grothendieck property of $C(\beta X)$.
\end{proof}

For the proof of Corollary \ref{separable} we will need some auxiliary
facts.

The next proposition is known; the case of the pointwise topology
can be found in \cite[Exercise 152]{Tka1} and the general result covering the cases
of both topologies is stated in \cite[Exercise 2, p.\ 36]{mccoy},
but without a proof, hence, for the sake of completeness, we include
the proof for the compact-open topology. Recall that a subset $Y$
of a topological space $X$ is \emph{C-embedded} (\emph{C$^*$-embedded}) in $X$ if every (bounded)
continuous real-valued function on $Y$ can be continuously extended
over $X$. Given a compact subspace $K$ of a space $X$ and
$\varepsilon>0$, we denote the set $\{f\in C_k(X): f(K)\subset
(-\varepsilon,\varepsilon)\}$, a basic neighborhood of zero in
$C_k(X)$, by $U_X(K,\varepsilon)$.

\begin{proposition}\label{restriction_open}
Let $Y$ be a subspace of a topological space $X$, and $R: C_k(X)\to C_k(Y)$ ($R: C_p(X)\to C_p(Y)$) be the restriction operator defined by $R(f) = f\rstr Y$. $R$ is open  if and only if $Y$ is closed and C-embedded in $X$.
\end{proposition}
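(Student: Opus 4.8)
Here is the plan.

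The plan is to test openness of the linear map $R$ directly against the standard bases of neighbourhoods of zero: in $C_{k}(X)$ these are the sets $U_X(K,\varepsilon)$ with $K\subseteq X$ compact and $\varepsilon>0$, and in $C_{p}(X)$ the same sets with $K$ finite. Since $R$ is linear, it is open precisely when $R\big(U_X(K,\varepsilon)\big)$ contains a neighbourhood of zero in $C_{k}(Y)$ (respectively $C_{p}(Y)$) for every such basic set. I would carry out the compact-open case; the pointwise case is literally the same argument with ``finite'' in place of ``compact'' throughout (the notation $U_X(K,\varepsilon)$ already makes sense for finite $K$, and then describes a basic pointwise neighbourhood).

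For the implication ``$\Leftarrow$'', assume $Y$ is closed and C-embedded in $X$. Given compact $K\subseteq X$ and $\varepsilon>0$, I claim $R\big(U_X(K,\varepsilon)\big)\supseteq U_Y(K\cap Y,\varepsilon)$; this suffices, because $K\cap Y$ is a compact subset of $Y$. Fix $g\in C(Y)$ with $g(K\cap Y)\subseteq(-\varepsilon,\varepsilon)$. Using C-embeddedness, choose $\widetilde g\in C(X)$ with $\widetilde g\rstr Y=g$ and set $A=\{x\in K:|\widetilde g(x)|\geq\varepsilon\}$, a compact set. Since $|\widetilde g(y)|=|g(y)|<\varepsilon$ for every $y\in K\cap Y$, we have $A\cap Y=\emptyset$; as $Y$ is closed and $X$ is Tychonoff, there is a continuous $h\colon X\to[0,1]$ with $h\rstr A\equiv 0$ and $h\rstr Y\equiv 1$. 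Then $f:=h\cdot\widetilde g\in C(X)$ satisfies $f\rstr Y=g$, vanishes on $A$, and $|f(x)|\leq|\widetilde g(x)|<\varepsilon$ for $x\in K\setminus A$; hence $f\in U_X(K,\varepsilon)$ and $R(f)=g$, which proves the claim.

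For ``$\Rightarrow$'', assume $R$ is open. Then $R\big(C_{k}(X)\big)$ is an open linear subspace of $C_{k}(Y)$, hence equals $C_{k}(Y)$ (an open linear subspace of a topological vector space is the whole space, since by continuity of scalar multiplication it contains a small multiple $tx$, and therefore $x$, for each vector $x$); thus $R$ is onto, which is exactly the statement that $Y$ is C-embedded in $X$. It remains to show $Y$ is closed. Suppose not and pick $x_0\in\overline{Y}\setminus Y$. By openness, $R\big(U_X(\{x_0\},1)\big)$ is a neighbourhood of zero, so it contains some $U_Y(K,\varepsilon)$. As $x_0\notin K\subseteq Y$ and $X$ is Tychonoff, choose a continuous $h\colon X\to[0,1]$ with $h(x_0)=1$ and $h\rstr K\equiv 0$, and put $g:=h\rstr Y$. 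Then $g(K)=\{0\}$, so $g\in U_Y(K,\varepsilon)\subseteq R\big(U_X(\{x_0\},1)\big)$; hence there is $f\in C(X)$ with $f\rstr Y=g$ and $|f(x_0)|<1$. But $f$ and $h$ agree on $Y$, hence on $\overline{Y}$, so $f(x_0)=h(x_0)=1$ --- a contradiction.

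The part I expect to be most delicate is the construction of the cut-off function $h$ in the ``$\Leftarrow$'' direction, because this is exactly where both hypotheses enter: closedness of $Y$ makes the ``bad set'' $A$ disjoint from the closed set $Y$, and the Tychonoff property of $X$ is what allows one to separate the compact (in the pointwise case, finite) set $A$ from the closed set $Y$ by a $[0,1]$-valued continuous function. The remaining ingredients are routine; one only needs to be slightly careful with the preliminary reductions (that openness of a linear map can be checked against a neighbourhood base of zero, and that the sets $U_X(K,\varepsilon)$ genuinely form such a base in each topology) and with the elementary fact, used for ``$\Rightarrow$'', that an open linear map onto a topological vector space is surjective.
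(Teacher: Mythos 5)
Your proof is correct and follows essentially the same route as the paper: sufficiency via extending $g$ to $\widetilde g$ and cutting off with a function separating $\{x\in K:|\widetilde g(x)|\ge\varepsilon\}$ from the closed set $Y$, and necessity of closedness via the observation that any extension of $h\rstr Y$ must agree with $h$ on $\overline Y$. The only (harmless) differences are that you treat the pointwise and compact-open cases uniformly and phrase the closedness step as extracting a basic neighbourhood inside the open image rather than exhibiting a function outside it.
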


\begin{proof}
As we mentioned, the proof for the case of the pointwise topology can be found in \cite[Exercise 152]{Tka1}, therefore we will only give the proof for the case of the compact-open topology.

Assume first that $R$ is open. Then obviously it is a surjection, which means that $Y$ is  C-embedded in $X$. Suppose, towards a contradiction, that $Y$ is not closed in $X$ and pick a point $x\in \overline{Y}\setminus Y$. We will show that the image $R(U_X(\{x\},1))$ is not open in $C_k(Y)$. Consider an arbitrary basic neighborhood of zero $U_Y(K,\varepsilon)$ in $C_k(Y)$, given by a compact set $K\subset Y$ and $\varepsilon>0$. The set $L=\{x\}\cup K$ is compact in $X$, hence it is C-embedded in $X$, see \cite[3.11]{gillman}.

Let $f:L\to \mathbb{R}$ be the (continuous) function which takes value $1$ at $x$ and value $0$ on $K$, and let $F$ be its continuous extension over $X$. Put $g = F\rstr Y$. Obviously, for any $h\in C_k(X)$ such that $h\rstr Y = g\rstr Y$, the functions $h$ and $F$ must agree on the closure of $Y$, so $h(x)=F(x)=1$. This shows that $$g\in U_Y(K,\varepsilon)\setminus R(U_X(\{x\},1)).$$

Now, assume that $Y$ is closed and C-embedded in $X$. To prove that $R$ is open its enough to verify that, for any compact $K\subset X$ and $\varepsilon>0$, we have $$R(U_X(K,\varepsilon)) = U_Y(K\cap Y,\varepsilon).$$ In the last equality, one inclusion is obvious. Take any $$g\in U_Y(K\cap Y,\varepsilon).$$ We will show that $g$ is an image under $R$ of some $f\in U_X(K,\varepsilon)$. Let $G$ be a continuous extension of $g$ over $X$. Put $$L = \{x\in K: |G(x)|\ge \varepsilon\}.$$ Clearly, $L$ is a compact subset of $X$ disjoint with $Y$. Hence we can use the well known fact (see \cite[3.11]{gillman}), that a compact set can be separated from a closed set by a continuous function, i.e., we can find a continuous $h:X\to [0,1]$ which sends $L$ to $0$ and $Y$ to $1$. One can easily verify that $f = hG$ has the required properties.
\end{proof}

Recall that if $X$ is a pseudocompact space  and $K$ is  a  compact space, then $X\times K$ is pseudocompact (see \cite[Corolllary 3.10.27]{En}), so $C_{p}(X\times K)$ contains a complemented copy of $(c_{0})_{p}$, provided that both $X$ and $K$ are infinite. For the compact-open topology in this function space we have the following special case of Corollary \ref{separable}.
\begin{proposition}\label{lastcase}  Assume that $X$ is an   infinite pseudocompact space  and $K$ is  an infinite compact space.
\begin{enumerate}
\item If $X$ contains an infinite compact subset, then  $C_{k}(X\times K)$ contains a complemented copy of the Banach space $c_{0}$.
\item If $X$ has no infinite compact subsets, then  $C_{k}(X\times K)$ contains a complemented copy of  $(c_{0})_{p}$.
\end{enumerate}
\end{proposition}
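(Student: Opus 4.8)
The plan is to exhibit the complemented copy directly, through a biorthogonal system consisting of a bounded sequence of functions and a sequence of compactly supported Radon measures on $X\times K$; the two assertions of the proposition will come out of the same construction with the roles of the two factors interchanged. Recall (as noted before the proposition) that $X\times K$ is pseudocompact, and that $X$ is pseudocompact by hypothesis. In each case I will produce functions $\psi_n\in C(X\times K)$ of the product form $\psi_n(x,y)=\alpha_n(x)h_n(y)$, with $\alpha_n\in C(X)$, $h_n\in C(K)$, $0\le\alpha_n,h_n\le 1$ (so $\|\psi_n\|_\infty\le 1$), together with Radon measures $\theta_n$ on $X\times K$ with compact support and $\sup_n\|\theta_n\|\le 2$, subject to: (a) $\theta_n(\psi_n)=1$ and $\theta_n(\psi_m)=0$ for $m\neq n$; (b) $\theta_n(f)\to 0$ for every $f\in C(X\times K)$; and (c) the series $\sum_n t_n\psi_n$ converges in $C_k(X\times K)$ for every $(t_n)_n$ in the relevant sequence space, and $(t_n)_n\mapsto\sum_n t_n\psi_n$ is a continuous linear operator into $C_k(X\times K)$. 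Granting this, the operator $T$ sending $f$ to $(\theta_n(f))_n$ is well defined into the sequence space by (b) and continuous; $\sigma((t_n)_n)=\sum_n t_n\psi_n$ is continuous by (c); $T\circ\sigma=\mathrm{id}$ by (a), applying each (continuous) $\theta_m$ termwise to the convergent series; and therefore $P:=\sigma\circ T$ is a continuous projection of $C_k(X\times K)$ onto the range of $\sigma$, while $\sigma$ is a linear homeomorphism of the sequence space onto that range, with inverse the restriction of $T$.

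\emph{Case (1): $X$ has an infinite compact subset $L$.} Then $M:=L\times K$ is an infinite compact subset of $X\times K$. Since $C(L)$ is an infinite-dimensional Banach space, the Josefson--Nissenzweig theorem (see \cite{Diestel}, \cite{KSZ}) provides Radon measures $\gamma_n$ on $L$ with $\|\gamma_n\|=1$ and $\gamma_n(g)\to 0$ for every $g\in C(L)$. As $K$ is an infinite compact Hausdorff space it contains a sequence $(y_n)_n$ of distinct points admitting a pairwise disjoint family $(V_n)_n$ of open sets with $y_n\in V_n$; fix $h_n\in C(K)$ with $0\le h_n\le 1$, $h_n(y_n)=1$, $\supp h_n\subseteq V_n$. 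Choose $\alpha_n^0\in C(L)$ with $\|\alpha_n^0\|_\infty\le 1$ and $\gamma_n(\alpha_n^0)\ge 1/2$, and extend it (using that $L$ is $C$-embedded in $X$, then truncating) to $\alpha_n\in C(X)$ with $\|\alpha_n\|_\infty\le 1$ and $\alpha_n\rstr L=\alpha_n^0$. Put $\psi_n(x,y):=\alpha_n(x)h_n(y)$ and let $\theta_n$ be the Radon measure on $X\times K$ given by $\theta_n(f):=\gamma_n(\alpha_n^0)^{-1}\int_L f(x,y_n)\,d\gamma_n(x)$; its support lies in $L\times\{y_n\}\subseteq M$ and $\|\theta_n\|\le 2$. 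Then (a) is clear: $\theta_n(\psi_m)=\gamma_n(\alpha_n^0)^{-1}h_m(y_n)\gamma_n(\alpha_m\rstr L)$, which vanishes for $m\neq n$ and equals $\gamma_n(\alpha_n^0)^{-1}\gamma_n(\alpha_n^0)=1$ for $m=n$. For (c) the relevant sequence space is $c_0$: for each fixed $y$ at most one $h_n(y)$ is nonzero, so $\|\sum_{n\ge N}t_n\psi_n\|_\infty\le\sup_{k\ge N}|t_k|$, hence $\sum_n t_n\psi_n$ converges uniformly on $X\times K$ with $\|\sum_n t_n\psi_n\|_\infty\le\|(t_n)_n\|_\infty$, so $\sigma$ maps $c_0$ continuously into $C_k(X\times K)$ (the sup-norm topology being finer than the compact-open one). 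Since every $\theta_n$ is supported in the fixed compact set $M$ and $\|\theta_n\|\le 2$ we have $|\theta_n(f)|\le 2\|f\rstr M\|_\infty$, so $T\colon C_k(X\times K)\to c_0$ will be continuous once (b) holds. For (b), note $\theta_n(f)=\gamma_n(\alpha_n^0)^{-1}\gamma_n\big(f(\cdot,y_n)\rstr L\big)$; the set $\{f(\cdot,y)\rstr L:y\in K\}$ is the continuous image of the compact space $K$ under $y\mapsto f(\cdot,y)\rstr L$ (continuous into $C(L)$ since $L$ is compact), hence norm-compact, and a norm-bounded weak$^*$-null sequence of functionals converges to $0$ uniformly on norm-compact sets, so $|\gamma_n(f(\cdot,y_n)\rstr L)|\to 0$ and $\theta_n(f)\to 0$. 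This yields a complemented copy of the Banach space $c_0$. (Alternatively, Case~(1) can be derived from the Cembranos--Freniche theorem applied to $M=L\times K$, using that the restriction map $C_k(X\times K)\to C(M)$ is open by Proposition~\ref{restriction_open}.)

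\emph{Case (2): $X$ has no infinite compact subset.} Run the mirror construction. Since $X$ is infinite (and Tychonoff) it contains a sequence $(x_n)_n$ of distinct points with a pairwise disjoint family $(O_n)_n$ of open sets, $x_n\in O_n$, and one fixes $\alpha_n\in C(X)$ with $0\le\alpha_n\le 1$, $\alpha_n(x_n)=1$, $\supp\alpha_n\subseteq O_n$. Let $(\gamma_n)_n$ be a Josefson--Nissenzweig sequence for $C(K)$, choose $h_n\in C(K)$ with $\|h_n\|_\infty\le 1$ and $\gamma_n(h_n)\ge 1/2$, put $\psi_n(x,y):=\alpha_n(x)h_n(y)$ and $\theta_n(f):=\gamma_n(h_n)^{-1}\int_K f(x_n,y)\,d\gamma_n(y)$; now $\supp\theta_n\subseteq\{x_n\}\times K$, which is compact because $K$ is, and $\|\theta_n\|\le 2$. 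Property (a) follows since $\theta_n(\psi_m)=\gamma_n(h_n)^{-1}\alpha_m(x_n)\gamma_n(h_m)$, which is $0$ for $m\neq n$ (disjoint bumps) and $1$ for $m=n$. Property (b) follows exactly as in Case~(1), now from the norm-compactness in $C(K)$ of $\{f(x,\cdot):x\in X\}$ --- the continuous image of $X$ under $x\mapsto f(x,\cdot)$ (continuous since $K$ is compact), hence a pseudocompact metrizable, and therefore compact, space. Thus $T(f)=(\theta_n(f))_n$ lands in $c_0$; but the supports of the $\theta_n$ are no longer contained in one compact subset of $X\times K$, so we obtain only that $T\colon C_k(X\times K)\to(c_0)_p$ is continuous (coordinatewise --- each $\theta_n$, being compactly supported, lies in $C_k(X\times K)^*$). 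For (c), which here uses the case hypothesis: as $X$ has no infinite compact subsets, every compact $Q\subseteq X\times K$ has finite projection onto $X$, so by disjointness of the $O_n$ only finitely many $\psi_n$ are not identically zero on $Q$; hence $\sum_n t_n\psi_n$ restricts to a finite sum on each compact set, defining a member of $C(X\times K)$, and $\sigma\colon(c_0)_p\to C_k(X\times K)$ is continuous, since a basic neighbourhood $U_{X\times K}(Q,\varepsilon)$ of zero is reached from the neighbourhood of zero in $(c_0)_p$ obtained by controlling the finitely many coordinates relevant to $Q$, using $\|\psi_n\|_\infty\le 1$. This gives a complemented copy of $(c_0)_p$.

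The one step that is not a routine verification is (b), i.e.\ that $\theta_n(f)\to 0$ for every $f\in C(X\times K)$; this is where the hypotheses are used --- compactness of $K$ in Case~(1), and pseudocompactness of $X$ together with the fact that a pseudocompact metrizable space is compact in Case~(2). The other essential use of a hypothesis occurs in Case~(2) property (c), where the absence of infinite compact subsets of $X$ is precisely what makes $\sigma$ continuous for the compact-open topology --- and this is also why Case~(2) delivers only $(c_0)_p$ and not the Banach space $c_0$.
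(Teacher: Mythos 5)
Your argument is correct, and in part (1) it takes a genuinely different route from the paper's. There, the paper simply quotes the Doma\'nski--Drewnowski theorem that $C_k(X,C_k(K))$ contains a complemented copy of $c_0$, together with the linear homeomorphism $C_k(X\times K)\cong C_k(X,C_k(K))$ from \cite{mccoy}; you instead build the biorthogonal system by hand, tensoring a Josefson--Nissenzweig sequence of the Banach space $C(L)$ (for an infinite compact $L\subseteq X$) with disjointly supported bumps on $K$ --- longer, but self-contained, and it isolates the single compact set $M=L\times K$ that makes all the coordinate functionals compact-open continuous at once. In part (2) your operator skeleton ($\sigma$, $T$, $P=\sigma\circ T$, continuity of $\sigma$ from the finiteness of projections of compact sets onto $X$) coincides with the paper's; what differs is the source of the weak$^*$-null measures: the paper pushes forward the combinatorial JN-sequence on $\beta\omega\times\beta\omega$ and invokes Glicksberg's theorem (Theorem \ref{pseudocompact_product_has_JNP}), whereas you take a classical JN-sequence on the single Banach space $C(K)$, place it on the fibres $\{x_n\}\times K$, and obtain the null condition from norm-compactness of $\{f(x,\cdot):x\in X\}$ in $C(K)$ (a continuous, hence pseudocompact, metrizable image of $X$, hence compact) --- a clean substitute for Glicksberg in this special case. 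Two small caveats. First, your parenthetical ``alternative'' for part (1) does not prove the stated claim: composing the open restriction $C_k(X\times K)\to C(L\times K)$ with a projection of $C(L\times K)$ onto a copy of $c_0$ only yields a quotient map onto $c_0$, and lifting a complemented copy back through a quotient map is not automatic; this is precisely why the paper claims only a quotient in the analogous Case 1 of the proof of Corollary \ref{separable}. Second, in part (2) the phrase ``restricts to a finite sum on each compact set, defining a member of $C(X\times K)$'' does not by itself give continuity of the sum, since $X\times K$ need not be a $k$-space; but because $(t_n)\in c_0$ and the supports of the $\psi_n$ are pairwise disjoint, the series converges uniformly, which is exactly the argument you already used in part (1).
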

\begin{proof}
Part (1): By Doma\'nski's and Drewnowski's theorem \cite{drew}, the space $C_{k}(X,C_{k}(K))$ contains a complemented copy of $c_{0}$. On the other hand, by \cite[Corollary 2.5.7]{mccoy} the spaces $C_{k}(X\times K)$ and  $C_{k}(X,C_{k}(K))$ are linearly homeomorphic.

Part (2): Since the product $X\times K$ is pseudocompact, we can apply Theorem \ref{pseudocompact_product_has_JNP}
to obtain a \jns\ $(\mu_{n})_{n\in\omega}$ on $X\times K$ such that $(\mu_{n})_{n\in\omega}$ is supported on the product $D\times E$, where $D\subset X$ and $E\subset Y$ are countable discrete, and the supports of $\mu_n$ have pairwise disjoint projections onto each axis.
Since the sets $D$ and $E$ are discrete and countable, we can find families $\{U_d: d\in D\}$ and $\{V_e: e\in E\}$ of  pairwise  disjoint sets, such that, for each $d\in D$, $U_d$ is an open neighborhood of $d$ in $X$ and for each $e\in E$, $V_e$ is an open neighborhood of $e$ in $Y$.
Let $A_n = \supp(\mu_n)$ and $A = \bigcup_{n\in\omega}A_n$. Given $a = (d_a,e_a)\in A$,  put $W_a = U_{d_a}\times V_{e_a}$. Clearly, the family $\{W_a: a\in A\}$ of neighborhoods of points of $A$ consists of pairwise  disjoint sets. Moreover, if we define, for each $n\in \omega$, $$W_n = \bigcup\{W_a: a\in A_n\},$$ then the properties of supports $A_n$ imply that the sets $W_n$ have pairwise disjoint projections onto each axis.  For each $a\in A$, take a continuous function $$g_a: X\times K\to [0,1]$$ such that $g_a(a)=1$ and $g$ takes value $0$ on $(X\times K)\setminus W_a$. For every $n\in \omega$ define $$f_n: X\times K\to [-1,1]$$ by
$$f_n = \sum_{a\in A_n}\frac{\mu_n(a)}{|\mu_n(a)|}g_a\,.$$

The functions $f_n$ and the sets $W_n$ have the following properties for all $n\in\omega$:
\begin{enumerate}[(a)]
\item $\mu_n(f_n) = 1$;
\item the support of $f_n$ is contained in $W_n$;
\item the support of $\mu_n$ is contained in $W_n$;
\item the projections of the sets $W_k,\ k\in\omega$, onto $X$ are pairwise disjoint.
\end{enumerate}

Consider the linear operator $$S:(c_{0})_{p}\to C_{k}(X\times K)$$
defined by $$S((t_n)) = \sum_{n\in\omega} t_nf_n$$ for $(t_n)\in
(c_{0})_{p}$. By properties (b) and (d),  the supports of $f_n$ are
pairwise disjoint, therefore $S$ is well defined. Given a compact
subset $L$ of $X\times K$, the projection of $L$ onto $X$ is finite,
hence, by properties (b) and (d), $L$ intersects only finitely many
supports of $f_n$, say only for $n$ in some finite set $F$.
Therefore, for any $\varepsilon>0$, if $|t_n|<\varepsilon$, for
$n\in F$, then  $S((t_n))\in U_{X\times K}(L,\varepsilon)$, which
means that the operator $S$ is continuous. Put $Z = S((c_{0})_{p})$.
We will show that $Z$ is an isomorphic copy of $(c_{0})_{p}$ which
is complemented in $C_{k}(X\times K)$.

Let $T: C_{k}(X\times K)\to (c_{0})_{p}$ be defined by $T(f)(n) = \mu_n(f)$ for $f\in C_{k}(X\times K)$ and $n\in\omega$. Obviously, the operator $T$ is continuous. Using properties (a)--(d) one can easily verify that $$(T\rstr Z)\circ S = \mathrm{Id}_{(c_{0})_{p}}$$ and $S\circ(T\rstr Z) = \mathrm{Id}_{Z}$, hence the spaces $Z$ and $(c_{0})_{p}$ are isomorphic. Let $$P = S\circ T: C_{k}(X\times K)\to C_{k}(X\times K).$$ The identities $Z = S((c_{0})_{p})$ and $S\circ(T\rstr Z) = \mathrm{Id}_{Z}$ imply that $P$ is a continuous projection of $C_{k}(X\times K)$ onto $Z$.
\end{proof}

We are ready to provide a proof of Corollary \ref{separable}.

\begin{proof}[{\normalfont\textbf{Proof of Corollary \ref{separable}}}]
Part (1) is a direct consequence of Theorem \ref{solution}.

For the proof of part (2) we shall consider several cases:
\smallskip\\
{\bf Case 1.}
If both spaces $X$ and $Y$ contain infinite compact subsets, say $K$ and $L$, respectively, then $K\times L$ is C-embedded in $X\times Y$ (see \cite[Section 3.11]{gillman}), and the restriction map $C_k(X\times Y)\rightarrow C_{k}(K\times L)$ is a continuous and open surjection by Proposition \ref{restriction_open}. Now we can use Proposition \ref{lastcase}.(1). 
\smallskip\\
{\bf Case 2.}
If neither $X$ nor $Y$ admits an infinite compact subset, then  part (1) applies for $C_{p}(X\times Y)=C_{k}(X\times Y)$.
\smallskip\\
{\bf Case 3.}
If one of the spaces  $X$ or $Y$ is not pseudocompact, the product $X\times Y$ is also not pseudocompact, and the space $C_{k}(X\times Y)$ contains a complemented copy of  $\mathbb{R}^\omega$, see \cite[Theorem 14]{ka-sa}.
\smallskip\\
{\bf Case 4.}
 The only remaining case to be checked is that  when both $X$ and $Y$ are pseudocompact, and one of these spaces contains an infinite compact subset but the other one lacks infinite compact subsets. Without loss of generality we can assume that $Y$ contains an infinite compact subset $K$. We will verify that $X\times K$ is C-embedded in $X\times Y$. Let $f$ be a continuous function on $X\times K$. Since $X\times K$ is pseudocompact, by Glicksberg's theorem \cite[Exercise 3.12.21(c)]{En}, $f$ can be extended to a continuous function $g$ on $\beta X\times K$. By compactness of $\beta X\times K$, we can extend $g$ continuously over  $\beta X\times Y$ obtaining a function $h$. Now, the restriction of $h$ to $X\times K$ is the desired extension of $f$. Proposition \ref{restriction_open} implies that the restriction operator $R: C_{k}(X\times Y)\to  C_{k}(X\times K)$ is open, hence also surjective. From Proposition \ref{lastcase} we obtain a projection $P$ of $C_{k}(X\times K)$ onto a subspace $Z$ isomorphic to $(c_{0})_{p}$. Since $P$ is open (as a map onto $Z$), the composition $P\circ R$ is an open continuous surjection of $C_{k}(X\times Y)$ onto a copy of $(c_{0})_{p}$.
\end{proof}

 \section{Haydon's construction of a pseudocompact space with no infinite compact subsets}

Theorem \ref{solution} may suggest  a question whether  $C_{p}(X\times X)$ contains a complemented copy of $(c_{0})_{p}$ for any infinite pseudocompact space $X$. In Section \ref{section_x_x_no_bjns}, assuming the Continuum Hypothesis (or even a weaker set-theoretic assumption), we will answer this question negatively. Our examples use the following general scheme of constructing pseudocompact spaces with no infinite compact subsets given by Haydon in \cite{Ha}.

For each  $A\in [\omega]^\omega$,  choose an ultrafilter $u_A\in\omega^*$ in the closure of $A$ in $\beta\omega$. Let $$X = \omega\cup\{u_A: A\in [\omega]^\omega\}$$ be topologized as a subspace of $\beta\omega$. To simplify the notation we will denote the family of all such spaces $X$ by \hs.
\medskip

We start with the following fact collecting some properties of every space $X\in\hs$.
\begin{proposition}\label{HS_properties}
Every space $X\in\hs$ has the following properties:
\begin{enumerate}
\item $X$ is pseudocompact of cardinality continuum;
\item all compact subspaces of $X$ are finite;
\item $C_p(X)$ does not have the \JNP;
\item  $C_{p}(X)$ admits an infinite dimensional metrizable quotient isomorphic to the subspace $(\ell_\infty)_{p}=\{(x_n)\in \IR^\omega:\sup_n |x_n|<\infty\}$ of $\IR^\omega$ endowed with the product topology and the Banach space $(C(X),\|.\|_{\infty})$ (i.e. endowed with the sup-norm topology) is isometric  to the Banach space $\ell_{\infty}$;
\item  for every infinite compact space $K$ both spaces $C_{p}(K\times X)$ and $C_{k}(K\times X)$ contain a  complemented copy of $(c_{0})_{p}$.
\end{enumerate}
\end{proposition}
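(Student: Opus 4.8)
The plan is to establish the five clauses one after another, drawing on the results recorded above; the one genuinely delicate item is the metrizable quotient in (4).

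For (1): the inequality $|X|\le|\omega|+|[\omega]^\omega|=\mathfrak c$ is clear, and choosing an almost disjoint family $\{A_\alpha:\alpha<\mathfrak c\}\subseteq[\omega]^\omega$ one gets $|X|\ge\mathfrak c$, because $\overline{A_\alpha}^{\beta\omega}\cap\overline{A_\beta}^{\beta\omega}\subseteq\omega$ for $\alpha\ne\beta$ forces the points $u_{A_\alpha}\in\omega^*$ to be pairwise distinct. Pseudocompactness I would derive from the facts that $\omega$ is dense in $X$ (each $u_A$ lies in $\overline A^{\beta\omega}$) and that every infinite $A\subseteq\omega$ clusters in $X$ at $u_A$: a Tychonoff space with a dense subset all of whose infinite subsets cluster somewhere in the space is pseudocompact, since otherwise an unbounded $f\in C(X)$ would be unbounded along a sequence in $\omega$ clustering at a point of discontinuity of $f$.

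For (2): a compact $S\subseteq X$ is closed in $\beta\omega$, hence, if it were infinite, would satisfy $|S|=2^{\mathfrak c}$ by the classical fact that every infinite closed subspace of $\beta\omega$ contains a copy of $\beta\omega$ (when $S\cap\omega$ is infinite its $\beta\omega$-closure already lies in $S$; otherwise $S\setminus\omega$ is an infinite compact subset of $\omega^*$, and from a countable relatively discrete subset of it one extracts pairwise disjoint infinite sets $B_n\subseteq\omega$ whose ``$\beta$-span'' furnishes $2^{\mathfrak c}$ points of $S$). Since $|S|\le|X|=\mathfrak c$, $S$ is finite. For (3): by Theorem \ref{ba-ka1} it suffices to show that $X$ admits no \jns, and a \jns\ on $X$, being finitely supported inside $X\subseteq\beta\omega$, would also be a \jns\ on $\beta\omega$ (for $g\in C(\beta\omega)$ one has $\mu_n(g)=\mu_n(g\restriction X)\to 0$ since $g\restriction X\in C(X)$); but $\beta\omega$ has no \jns --- equivalently, $C_p(\beta\omega)$ has no complemented copy of $(c_0)_p$ --- since otherwise the Closed Graph Theorem would put a complemented copy of $c_0$ in the Grothendieck space $C(\beta\omega)=\ell_\infty$.

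For (4): the Banach-space part is quick, because $\omega$ dense in $X\subseteq\beta\omega$ gives $\beta X=\beta\omega$, and pseudocompactness makes every $f\in C(X)$ bounded, so $(C(X),\|\cdot\|_\infty)$ is isometric to $C(\beta\omega)=\ell_\infty$. For the metrizable quotient, the obvious candidate is the restriction map $C_p(X)\to C_p(\omega)=\IR^\omega$, whose range is exactly $(\ell_\infty)_p$; however this map is not open (the ultrafilter-limit functionals it would have to render continuous are discontinuous on $(\ell_\infty)_p$), so one must instead exhibit, using the identification $C(X)\cong\ell_\infty$ together with the machinery for metrizable quotients of $C_p$-spaces, a genuine quotient of $C_p(X)$ that carries the product topology of $(\ell_\infty)_p$. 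This is the step I expect to be the main obstacle. Lastly, (5) is immediate from (1), (2) and earlier results: for infinite compact $K$ the product $K\times X$ is pseudocompact (a compact space times a pseudocompact one), so $C_p(K\times X)$ contains a complemented copy of $(c_0)_p$ by Theorem \ref{solution}(1); and since $X$ has no infinite compact subsets by (2), $C_k(X\times K)$ contains a complemented copy of $(c_0)_p$ by Proposition \ref{lastcase}(2).
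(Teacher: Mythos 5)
Your treatments of (1), (2), (3) and (5) are essentially the paper's: the cardinality count via an almost disjoint family, pseudocompactness via an unbounded $f$ being unbounded along a sequence in the dense set $\omega$ that clusters at some $u_A$ (your phrase ``a point of discontinuity of $f$'' is a slip --- $f$ is continuous everywhere, and the point is precisely that continuity at the cluster point $u_A$ forces $|f(u_A)|\ge m$ for all $m$, which is absurd --- but the argument you intend is Haydon's and is correct), finiteness of compacta from the classical fact that infinite closed subsets of $\beta\omega$ have cardinality $2^{\mathfrak c}$, and (5) exactly as in the paper from Theorem \ref{solution}(1) and Proposition \ref{lastcase}(2). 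Your route to (3) --- pushing a putative \jns\ on $X$ forward to a \jns\ on $\beta\omega$ and then contradicting the Grothendieck property of $\ell_\infty$ via Theorem \ref{ba-ka1} and the Closed Graph Theorem --- is a harmless reshuffling of the paper's argument (which instead transfers the complemented copy of $(c_0)_p$); both hinge on the same two ingredients.

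The genuine gap is in (4): you explicitly leave the existence of a quotient of $C_p(X)$ isomorphic to $(\ell_\infty)_p$ unproved, flagging it as ``the main obstacle.'' Your diagnosis is right --- the restriction operator $C_p(X)\to C_p(\omega)$ has image $(\ell_\infty)_p$ but is not open, consistently with Proposition \ref{restriction_open} since $\omega$ is not closed in $X$ --- but identifying the obstacle is not the same as overcoming it. The paper closes this by invoking Theorem 1 of \cite{BKS}: because $X$ is pseudocompact (item (1)) and contains the countable discrete set $\omega$, which is C$^*$-embedded in $X$ (being C$^*$-embedded already in $\beta\omega$), the space $C_p(X)$ has a quotient $C_p(X)/W$ isomorphic to $(\ell_\infty)_p$, where
$$W=\bigcap_{n} \Big\{f\in C_p(X): \sum_{x\in F_n} f(x)=0 \Big\}$$
for any sequence $(F_n)_n$ of non-empty, finite, pairwise disjoint subsets of $\omega$ with $|F_n|\to\infty$. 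Without this (or an equivalent construction of such a quotient map), item (4) is not established. The Banach-space half of (4), via $\beta X=\beta\omega$ and pseudocompactness, is fine as you state it.
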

\begin{proof}
 Obviously $|X|\le 2^\omega$. To see that $X$ must contain continuum many ultrafilters, it is enough to take an almost disjoint family $\mathcal{A}\subset  [\omega]^\omega$ (i.e., distinct members of $\mathcal{A}$ have finite intersection) of cardinality continuum. Then the closures of elements of $\mathcal{A}$ in $\beta\omega$ have pairwise disjoint intersections with $\omega^*$. This proves the second item of (1).

The first item of  (1) has been shown by Haydon in the third example of \cite{Ha}. 
Indeed, if $X$ is not pseudocompact, then there exists $f\in C(X)$ unbounded.  As $\omega$ is dense in $X$ one gets a sequence $(n_{k})$ in $\omega$ such that $|f(n_{k})|>k$. For the set $A = \{{n_{k}:k\in\omega}\}$ the corresponding $u_{A}$ in $\beta\omega$ belongs to the closure in $\beta\omega$ of $\{n_{k}:k\geqslant m\}$ for each $m\in\omega$. Hence $|f(u_{A})|>m$ for each $m\in\omega$, a contradiction.

Proof of (2): Since infinite compact subsets of $\beta\omega$  have cardinality $2^{\mathfrak{c}}$, every compact subset of $X$ is finite by (1).

Proof of (3): Note that $\beta X=\beta\omega$ and the Banach space $(C(X),\|.\|_{\infty})$ is isometric to $\ell_{\infty}= C(\beta\omega)$.  On the other hand, as $C(\beta\omega)$ is a Grothendieck space,  $C(X)$ does not have a complemented copy of the Banach space $c_{0}$. Consequently, by applying the closed graph theorem \cite[Theorem 4.1.10]{bonet},  the space $C_{p}(X)$ does not contain a complemented copy of $(c_{0})_{p}$. Now by   Theorem \ref{ba-ka1} we know that $C_p(X)$ does not have \JNP.

Proof of (4):  Since the space $X$ is pseudocompact and contains discrete $\omega$ which is C$^{*}$-embedded into $X$, we apply Theorem 1 from  \cite{BKS} to deduce that $C_{p}(X)$ has a quotient $C_{p}(X)/W$ isomorphic to the subspace $(\ell_\infty)_{p}$ of $\IR^\omega$ endowed with the product topology, where $$W=\bigcap_{n} \{f\in C_p(X): \sum_{x\in F_n} f(x)=0 \}$$ and $(F_{n})_{n\in\omega}$ is any sequence  of non-empty, finite and pairwise disjoint subsets of $\omega$ with $\lim_n |F_n|=\infty$.
The  second part of the item (4) is clear since $C(\beta\omega)$ and  $(C(X),\|.\|_{\infty})$ are isometric.

Proof of (5): Since $K\times X$ is pseudocompact, the first claim of (5) follows from Theorem \ref{solution}. The other claim follows from Proposition \ref{lastcase}.
\end{proof}

The following proposition characterizes those subspaces $X$ of $\beta\omega$ containing $\omega$ which are in the class \hs. 

\begin{proposition}\label{charact_HS}
For a subspace $X$ of $\beta\omega$ containing $\omega$, the following conditions are equivalent:
\begin{enumerate}
\item $X\in\hs$;
\item $X$ is pseudocompact and of cardinality continuum;
\item $X$ is of cardinality continuum, and every infinite subset of $\omega$ has an accumulation point in $X$.
\end{enumerate}
\end{proposition}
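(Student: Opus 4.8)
The plan is to run the cycle $(1)\Rightarrow(2)\Rightarrow(3)\Rightarrow(1)$. Throughout I identify the points of $\omega^*=\beta\omega\setminus\omega$ with the free ultrafilters on $\omega$, and I use the elementary facts that for $A\in[\omega]^\omega$ the closure of $A$ in $\beta\omega$ equals $A\cup A^*$, where $A^*=\{v\in\omega^*:A\in v\}$ is a nonempty clopen subset of $\omega^*$, and that, since every point of $\omega$ is isolated in $\beta\omega$, the accumulation points of $A$ in $\beta\omega$ are exactly the points of $A^*$. Hence, for a subspace $X\subseteq\beta\omega$ with $\omega\subseteq X$, the set $A$ has an accumulation point in $X$ if and only if $X\cap A^*\neq\emptyset$, i.e.\ if and only if some ultrafilter in $X\setminus\omega$ contains $A$. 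With these conventions, $(1)\Rightarrow(2)$ is nothing but Proposition~\ref{HS_properties}(1).

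For $(2)\Rightarrow(3)$: the cardinality clause of (3) is already contained in (2), so it suffices to show that every $A\in[\omega]^\omega$ has an accumulation point in $X$. Suppose not; then $X\cap A^*=\emptyset$, and I will contradict pseudocompactness by exhibiting an unbounded $f\in C(X)$. Enumerate $A=\{a_k:k\in\omega\}$, define $g\colon\omega\to\mathbb{R}$ by $g(a_k)=k$ and $g(n)=0$ for $n\in\omega\setminus A$, and let $\widehat g\colon\beta\omega\to[-\infty,+\infty]$ be the continuous extension of $g$ to the two-point compactification of $\mathbb{R}$. If $v\in\omega^*$ satisfies $v\notin A^*$, then $\omega\setminus A\in v$, so $v$ lies in the closure of $\omega\setminus A$ and therefore $\widehat g(v)\in\overline{g(\omega\setminus A)}=\{0\}$; since $X\cap A^*=\emptyset$, this applies to every $v\in X\cap\omega^*=X\setminus\omega$, while $\widehat g(n)=g(n)\in\mathbb{R}$ for $n\in\omega$. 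Consequently $\widehat g$ restricts to a continuous real-valued function on $X$, and it is unbounded because it agrees with $g$ on $\omega\subseteq X$. This contradicts pseudocompactness, so (3) holds.

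For $(3)\Rightarrow(1)$: I must produce, for every $B\in[\omega]^\omega$, an ultrafilter $u_B\in\omega^*$ lying in the closure of $B$ such that $\omega\cup\{u_B:B\in[\omega]^\omega\}$ is exactly $X$; by the remarks above this amounts to choosing $u_B\in X\cap B^*$ for each $B$ in such a way that $B\mapsto u_B$ is onto $X\setminus\omega$. By (3), $|X\setminus\omega|=\mathfrak{c}$; fix an enumeration $X\setminus\omega=\{x_\beta:\beta<\mathfrak{c}\}$. Each $x_\beta$ is a free ultrafilter, hence $|x_\beta|=\mathfrak{c}$ as a subfamily of $\mathcal{P}(\omega)$ (e.g.\ it contains all supersets of a fixed co-infinite member of it). A transfinite recursion of length $\mathfrak{c}$ then yields pairwise distinct sets $A_\beta\in x_\beta$: at step $\beta$, fewer than $\mathfrak{c}$ sets have been used, while $|x_\beta|=\mathfrak{c}$, so an unused member of $x_\beta$ is available. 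Now set $u_{A_\beta}:=x_\beta$ for each $\beta<\mathfrak{c}$, and for every $B\in[\omega]^\omega$ not of the form $A_\beta$ let $u_B$ be an arbitrary element of the nonempty set $X\cap B^*$. In all cases $u_B\in X\cap B^*$, so $u_B\in\omega^*$ lies in the closure of $B$ and belongs to $X$; hence $\omega\cup\{u_B:B\in[\omega]^\omega\}\subseteq X$. Conversely, $\omega\subseteq X$ and every $x\in X\setminus\omega$ equals $x_\beta=u_{A_\beta}$ for some $\beta$, so $X\subseteq\omega\cup\{u_B:B\in[\omega]^\omega\}$. Therefore $X\in\hs$.

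The only genuinely non-routine point is the surjectivity requirement in $(3)\Rightarrow(1)$: one must match the $\mathfrak{c}$-sized index family $[\omega]^\omega$ onto the $\mathfrak{c}$-sized set of targets $X\setminus\omega$ under the constraint that $B$ may be sent only to an ultrafilter containing $B$. This is precisely what the transfinite selection of distinct representatives $A_\beta\in x_\beta$ handles, and it hinges on each free ultrafilter on $\omega$ having cardinality $\mathfrak{c}$. Everything else reduces to the elementary description of closures and accumulation points in $\beta\omega$ together with the standard ``extend the unbounded function as far as it stays finite'' trick used for $(2)\Rightarrow(3)$.
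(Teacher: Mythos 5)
Your proof is correct and follows essentially the same route as the paper: (1)$\Rightarrow$(2) is quoted from Proposition~\ref{HS_properties}, (2)$\Rightarrow$(3) is the same clopen-discrete/unbounded-function observation (which you spell out in more detail), and (3)$\Rightarrow$(1) is the same transfinite matching of sets in $[\omega]^\omega$ to ultrafilters in $X\cap\omega^*$. The only difference is that the paper runs a full back-and-forth to get a bijection $A\mapsto u_A$, whereas you only carry out the ``forth'' half to get a surjection onto $X\setminus\omega$ (choosing distinct $A_\beta\in x_\beta$ using $|x_\beta|=\mathfrak c$), which suffices because the definition of \hs\ does not require the assignment to be injective.
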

\begin{proof}
The implication (1)$\Rightarrow$(2) was explained in the previous proposition.

The implication (2)$\Rightarrow$(3) is obvious, a pseudocompact $X$ cannot contain an infinite clopen discrete subset.

To verify (3)$\Rightarrow$(1), first observe that each ultrafilter in $X\cap \omega^*$ is in the closure of continuum many sets $A\in [\omega]^\omega$. Also, given $A\in [\omega]^\omega$, the closure of $A$ in $X$ has cardinality continuum, which is witnessed by an almost disjoint family $\mathcal{A}$ of subsets of $A$ of cardinality $|\mathcal{A}| = 2^\omega$. Therefore, by a standard back-and-forth inductive argument, we can construct a bijection $f$ between $[\omega]^\omega$ and $X\cap \omega^*$ such that $u_A = f(A)$ is in the closure of $A$. To this end, enumerate $[\omega]^\omega$ as $\{A_\alpha: \alpha < 2^\omega\}$, and $X\cap \omega^*$ as $\{u_\alpha: \alpha < 2^\omega\}$. Inductively, for each $\alpha < 2^\omega$, define $f$ on  $\{A_\beta: \beta < \alpha\}$ and $f^{-1}$ on  $\{u_\beta: \beta < \alpha\}$.
\end{proof}
In \cite{marci} Marciszewski  constructed an example of a space $X\in\hs$ such that there is no
continuous linear surjection from the space $C_p(X)$ onto $C_p(X)\times\mathbb{R}$. In particular,
$C_p(X)$ is not linearly
homeomorphic to $C_p(X)\times\mathbb{R}$, hence $C_p(X)$ does not contain any complemented copy of $c_0$ or $\mathbb{R}^\omega$. To achieve this it is enough to require that all ultrafilters $u_A$ in $X$ are weak $P$-points and they are pairwise non-isomorphic.

Recall that a point $p\in\omega^*$ is a {\it weak $P$-point} if
$p$ is not in the closure of any countable set
$D\subset \omega^*\setminus\{p\}$, see
\cite{Mill}. Two ultrafilters $u,v\in\beta\omega$ are called {\em isomorphic} if $h(u)=v$ for some homeomorphism $h$ of $\beta\omega$; equivalently, there is a bijection $f: \omega\to \omega$ such that $u = \{f^{-1}(A): A\in v\}$. We will appeal to weak $P$-points in Section \ref{section_x_x_no_bjns}.

Since there are $\mathfrak{c}$ many bijections $f:\omega\to\omega$ and $2^\mathfrak{c}$ many ultrafilters on $\omega$, for each two sets $A,B\in[\omega]^\omega$ one can easily find two non-isomorphic ultrafilters $u_A\in\clo_{\beta\omega}(A)$ and $u_B\in\clo_{\beta\omega}(B)$. This observation will be useful in the next proposition.

\begin{proposition}\label{not-pseudocompact}
Let $X = \omega\cup\{u_A: A\in [\omega]^\omega\}$ be a space in \hs\ such that for distinct $A,B\in [\omega]^\omega$ the ultrafilters $u_A,u_B$ are not isomorphic. Then the square $X\times X$ is not pseudocompact.
\end{proposition}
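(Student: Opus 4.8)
The plan is to exhibit an infinite set $S\subseteq\omega\times\omega$ that has no accumulation point in $X\times X$. Since $\omega$ is open, dense and discrete in $X$, the square $\omega\times\omega$ is open, dense and discrete in $X\times X$; in particular every point of $S$ is isolated in $X\times X$. Hence, once we know $S$ has no accumulation point, the family $\big\{\{s\}\colon s\in S\big\}$ is an infinite, locally finite family of non-empty open subsets of $X\times X$ (equivalently, for any enumeration $\{s_n\colon n\io\}$ of $S$ one has $\bigcap_{k\io}\overline{\{s_n\colon n\ge k\}}=\emptyset$), which is exactly a witness that $X\times X$ fails to be pseudocompact.

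To construct $S$, fix a fixed-point-free involution $\sigma\colon\omega\to\omega$ — say $\sigma(2k)=2k+1$ and $\sigma(2k+1)=2k$ for $k\io$ — and set $S=\{(n,\sigma(n))\colon n\io\}$. For an ultrafilter $u$ on $\omega$ write $\sigma(u)=\{B\subseteq\omega\colon\sigma^{-1}(B)\in u\}$ for its push-forward along $\sigma$; note that $\sigma(u_A)$ is again a free ultrafilter, isomorphic to $u_A$. The decisive feature of this $\sigma$ is that $\sigma(u)\ne u$ for \emph{every} ultrafilter $u$: indeed, if $E$ denotes the set of even numbers then $\sigma^{-1}(E)=\omega\setminus E$, so $E\in\sigma(u)\iff\omega\setminus E\in u\iff E\notin u$. (Any transversal of the orbits of $\sigma$ works in place of $E$.)

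It remains to check that no point of $X\times X$ is an accumulation point of $S$. Points of $\omega\times\omega$ are isolated, so they are out. For a point $(u_A,m)$ or $(n,u_B)$ with $n,m\io$, every basic neighbourhood meets $S$ in at most one point, since $\sigma$ is injective. The essential case is a point $(u_A,u_B)$ with $A,B\in[\omega]^\omega$: basic neighbourhoods of it meet $\omega\times\omega$ precisely in the rectangles $B_1\times B_2$ with $B_1\in u_A$, $B_2\in u_B$, so such a neighbourhood meets $S$ in $\{(n,\sigma(n))\colon n\in B_1\cap\sigma^{-1}(B_2)\}$. Therefore $(u_A,u_B)$ is an accumulation point of $S$ iff $B_1\cap\sigma^{-1}(B_2)$ is infinite for all $B_1\in u_A$, $B_2\in u_B$, and — since $u_A$, $u_B$ are free ultrafilters — this is equivalent to $\sigma(u_A)=u_B$ (if $\sigma(u_A)\ne u_B$, pick $B_2\in u_B$ with $\sigma^{-1}(B_2)\notin u_A$ and take $B_1=\omega\setminus\sigma^{-1}(B_2)$). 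But $\sigma(u_A)=u_B$ would force $u_B$ to be isomorphic to $u_A$, hence $A=B$ by the hypothesis of the proposition, hence $\sigma(u_A)=u_A$ — contradicting the choice of $\sigma$. So $S$ has no accumulation point in $X\times X$, as required. I do not anticipate a genuine obstacle: the only two points that need care are the routine push-forward-ultrafilter computation that turns ``accumulation point of $S$'' into ``$\sigma(u_A)=u_B$'', and the verification that $\sigma$ fixes no ultrafilter.
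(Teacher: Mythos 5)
Your proof is correct and takes essentially the same route as the paper's: both exhibit the graph of a bijection of $\omega$ as an infinite clopen discrete subspace of $X\times X$, and both show that an accumulation point $(u,v)$ in $\omega^*\times\omega^*$ would force $v$ to be the push-forward of $u$ along (an extension of) that bijection, hence isomorphic to $u$, contradicting the hypothesis. The only minor difference is that the paper uses a bijection between two \emph{disjoint} infinite subsets of $\omega$, which makes $u\neq v$ automatic, whereas your fixed-point-free involution handles the diagonal case via the extra (correct) observation that $\sigma$ fixes no ultrafilter.
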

\begin{proof}
Take two disjoint infinite sets $A,B\subset\omega$ and any bijection $f:A\to B$. Observe that the graph $\Gamma=\{(x,f(x)):x\in A\}$ of $f$ is an open discrete subspace of $X\times X$. We will show that $\Gamma$ is closed in $X\times X$.

Suppose,  towards a contradiction that, there exists $(u_1,u_2)\in
\clo_{X\times X}(\Gamma) \setminus \Gamma$. For any $n\in\omega$,
the intersections $\Gamma\cap (\{n\}\times X)$ and $\Gamma\cap(X
\times \{n\})$ contain at most one point. Therefore, they are closed
in $\{n\}\times X$ and $X \times \{n\}$, respectively. Since the
latter sets are open in $X\times X$, it follows that
$u_1,u_2\in\omega^*$. Since $\Gamma\subset A\times B$, we have
$$(u_1,u_2)\in \clo_{X\times X}(A\times B) = \clo_{X}(A)\times
\clo_{X}(B).$$ The disjoint sets $A$ and $B$ have disjoint closures
in $X$, so $u_1\neq u_2$.

Let $g:\omega\to \omega$ be  any bijection extending $f$.  We claim that
the ultrafilter $\{g^{-1}(C): C\in u_2\}$ is equal to $u_1$. Suppose
that these two ultrafilters are not equal. Then we can find
$D\subset \omega$ such that $D\in u_1$ and $$(\omega\setminus\ D)\in
\{g^{-1}(C): C\in u_2\},$$ i.e., $E = g(\omega\setminus\ D)\in u_2$.
Observe that the product $D\times E$ is disjoint with the graph of
$g$, hence is also disjoint with $\Gamma$. Therefore the product
$\clo_{X}(D)\times \clo_{X}(E)$ is a neighborhood of $(u_1,u_2)$
disjoint with $\Gamma$ (because $\Gamma\subset\omega\times\omega$
and $$(\clo_{X}(D)\times \clo_{X}(E))\cap\omega\times\omega=D\times
E,$$ and the latter is disjoint with $\Gamma$), a contradiction. We
obtain that the ultrafilters $u_1$ and $u_2$ are isomorphic,
contradicting our assumption on $X$, and thus showing that $\Gamma$ is closed.

Since the product $X\times X$ contains an infinite clopen discrete subspace $\Gamma$, it cannot be pseudocompact.

\end{proof}
On the other hand, we have the following example.
\begin{example}\label{second}
There exists a space $Y\in\hs$ such that the square $Y\times Y$ is pseudocompact. Consequently,
 $C_{p}(Y\times Y)$ contains a complemented copy of $(c_{0})_{p}$.
\end{example}
\begin{proof}
For any space $X\in\hs$, the square $\omega\times\omega$ is dense in $X\times X$, therefore it is enough to construct a space $Y\in\hs$ such that any infinite subset of $\omega\times\omega$ has an accumulation point in $Y\times Y$, see the proof of Proposition \ref{HS_properties}.(1).

Let $\{A_\alpha: \alpha < 2^\omega\}$ be an enumeration of $[\omega]^\omega$, and $\{C_\alpha: \alpha < 2^\omega\}$  be an enumeration of $[\omega\times\omega]^\omega$. For each $\alpha < 2^\omega$, choose an ultrafilter $u_{\alpha}$  from $\omega^*$ in the closure of $A_\alpha$,
and an accumulation point $(p_\alpha,q_\alpha)$ of $C_\alpha$ in $\beta\omega\times\beta\omega$. Let $$Y= \omega\cup \{u_\alpha,p_\alpha,q_\alpha: \alpha < 2^\omega\}.$$ One can easily verify that $Y$ satisfies condition (3) from Proposition \ref{charact_HS} (note that the points $u_\alpha$'s are added to $Y$ in order to ensure that every infinite subset of $\omega$ has an accumulation point in $Y$), so it is in \hs, and  any infinite subset of $\omega\times\omega$ has an accumulation point in $Y\times Y$. The last claim follows  from Theorem \ref{solution}.
\end{proof}

If $X$ is the space from Proposition \ref{not-pseudocompact} and $Y$ as in Example \ref{second}, then by taking the disjoint union $Z=X\sqcup Y$ we get the following corollary.

\begin{corollary}
There exists a pseudocompact space $Z$ such that its square $Z\times Z$ is not pseudocompact, but $C_p(Z\times Z)$ contains a complemented copy of $(c_0)_p$.
\end{corollary}

\section{The Bounded Josefson–Nissenzweig Property}

In this section we define a ``bounded'' version of the Josefson–Nissenzweig property. A sequence $(\mu_n)$ of finitely supported signed measures  on $X$ such that $\|\mu_n\|=1$ for all $n\in \mathbb{N}$, and $\lim\limits_{n}\mu_n(f) = 0$ for each bounded $f\in C_p(X)$ is called {\em a bounded Josefson--Nissenzweig sequences} or, in short, {\em a \bjns}. We say that $C_{p}(X)$  has {\em the Bounded Josefson--Nissenzweig Property} or, in short, {\em the \BJNP}, if $X$ admits a \bjns.

Obviously, for pseudocompact spaces $X$, these bounded versions coincide with the standard ones. It is also trivial that the  \JNP\ implies the \BJNP\ and each \jns\ is a \bjns.

One can easily construct examples of \bjns s, which are not \jns s. Actually, if  $X$ is not pseudocompact and  $C_{p}(X)$ has the \JNP, then $X$ has a  \bjns\, which is not a \jns. Indeed, if $X$ is such a space, take a \jns\ $(\mu_n)_n$ on $X$ and a continuous unbounded function $f$ on X. For each $n$, pick $x_n\in X$ such that
$|f(x_n)| > n$. Define $\nu_n = \mu_n + (1/n)\delta_{x_n}$. One can easily verify that the sequence $(\nu_n)_n$ (after normalizing) is a \bjns\ but not a \jns.


In Example \ref{bjnp_not_jnp} we construct a space with the BJNP but without the JNP. For a verification of the properties of the example we will need an auxiliary fact concerning \jns s. Recall that a subset $A$ of a topological space $X$ is \textit{bounded} if for every $f\in C_p(X)$ the set $f(A)$ is bounded in $\mathbb{R}$.

\begin{proposition}\label{union_supp_bounded}
Let $(\mu_n)_{n\in\omega}$ be a \jns\ of measures on a space $X$. Then the union $\bigcup_{n\in\omega}\supp(\mu_n)$ of supports of $\mu_n$ is bounded in $X$.
\end{proposition}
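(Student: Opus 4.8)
The plan is to argue by contradiction: suppose $B=\bigcup_{n\in\omega}\supp(\mu_n)$ is not bounded in $X$, so there is $f\in C_p(X)$ with $f(B)$ unbounded in $\mathbb{R}$. The idea is to ``amplify'' the failure of boundedness to violate the defining convergence property of the \jns. Since each $\supp(\mu_n)$ is finite, put $M_n=\max\{|f(x)|\colon x\in\supp(\mu_n)\}$; unboundedness of $f(B)$ means $\sup_n M_n=\infty$, so after passing to a subsequence we may assume $M_n\to\infty$ and in fact $M_n\ge n$ (or grows as fast as we like). The natural candidate for a ``bad'' test function is something built from $f$ that is still continuous on $X$ but has been rescaled so that $\mu_n$ evaluated on it stays bounded away from $0$.

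The key step is to produce a \emph{single} continuous function $g\in C_p(X)$ witnessing $\lim_n\mu_n(g)\neq 0$. A clean way: choose a subsequence $(\mu_{n_k})_k$ along which $M_{n_k}\to\infty$ fast, pick for each $k$ a point $x_k\in\supp(\mu_{n_k})$ with $|f(x_k)|=M_{n_k}$, and consider a function of the form $g=h\circ f$ where $h\colon\mathbb{R}\to\mathbb{R}$ is a continuous function chosen so that $h$ is $0$ on a neighborhood of each value $f(x)$ for $x\in\supp(\mu_{n_j})$, $j<k$, but $h(f(x_k))$ is large enough to dominate. Because $\|\mu_{n_k}\|=1$, $|\mu_{n_k}(g)|\le \max_{x\in\supp(\mu_{n_k})}|g(x)|$ — but this only gives an upper bound, whereas we need a lower bound. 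So instead one should fix a point $x_k$ realizing the maximum weight $|\mu_{n_k}(\{x_k\})|=:w_k$ (note $w_k\ge 1/|\supp(\mu_{n_k})|$, which may be small). The real leverage comes from combining the function $f$ with the values: define $g$ piecewise in the range of $f$ so that $g$ vanishes at all support points of $\mu_{n_k}$ except those where $f$ is very large, where $g$ is boosted by a factor $\sim 1/w_k$. Then $\mu_{n_k}(g)$ picks up a term of absolute value $\ge 1$ from $x_k$, while the contributions from the other support points either vanish (if $g=0$ there) or can be controlled. Continuity of $g$ on $X$ is automatic since $g=h\circ f$ with $h$ continuous on $\mathbb{R}$, provided the ``boost intervals'' around the relevant $f$-values are pairwise disjoint, which we can arrange by thinning the subsequence so that the maxima $M_{n_k}$ are spread far apart in $\mathbb{R}$.

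The main obstacle is the bookkeeping needed to simultaneously (i) keep $h$ continuous with disjoint ``active'' intervals, (ii) ensure the boosted term at $x_k$ is not cancelled by the other terms of $\mu_{n_k}$ sharing nearby $f$-values, and (iii) control the tail contributions from support points of $\mu_{n_k}$ that lie in intervals active for other indices $j\neq k$ — these would have to be shown small, which requires knowing something about $|\mu_{n_k}|$ on those points. A cleaner route that sidesteps (iii): after passing to a subsequence, one can demand that the \emph{ranges} $f(\supp(\mu_{n_k}))$ together with the single ``spike'' point $x_k$ are arranged so that the interval where $h$ is nonzero for index $k$ meets $f(\supp(\mu_{n_j}))$ only for $j=k$; this is possible because each $\supp(\mu_{n_j})$ is finite, so $\bigcup_{j}f(\supp(\mu_{n_j}))$ is countable, and we can choose the spike values $f(x_k)$ to avoid a neighborhood of all the others. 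With that arrangement $\mu_{n_k}(g)=\frac{1}{w_k}\,\mu_{n_k}(\{x_k\})\cdot h(f(x_k))=\pm h(f(x_k))\ge 1$ for all $k$, so $\mu_{n_k}(g)\not\to 0$, contradicting that $(\mu_n)_n$ is a \jns. Hence $B$ is bounded in $X$. I expect the delicate point to be the simultaneous choice of the subsequence and the spike points; everything else is routine.
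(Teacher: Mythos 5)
There is a genuine gap, and it is precisely the obstacle you flag as (ii): all of your test functions have the form $g=h\circ f$, and such functions cannot separate two points of $X$ that $f$ identifies. Concretely, $\mu_n(h\circ f)=(f_*\mu_n)(h)$ depends only on the pushforward $f_*\mu_n$ of $\mu_n$ along $f$, and this pushforward can be identically zero even though $\|\mu_n\|=1$ and $f$ is unbounded on the union of supports: nothing in your setup rules out, say, $\mu_n=\tfrac12\delta_{x_n}-\tfrac12\delta_{y_n}$ with $x_n\ne y_n$ but $f(x_n)=f(y_n)\to\infty$. In that situation $\mu_n(h\circ f)=0$ for \emph{every} continuous $h:\mathbb{R}\to\mathbb{R}$, so no choice of spikes, boosts, or subsequences within your framework can yield a contradiction. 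Your proposed remedy---``choose the spike values $f(x_k)$ to avoid a neighborhood of all the others''---does not address this, because the colliding point sits in the same support as $x_k$ and has exactly the same $f$-value; there is nothing to move. (Your obstacle (iii) is also not really sidestepped: the countable set $\bigcup_j f(\supp(\mu_{n_j}))$ may be dense in itself, e.g.\ contain all rationals, so no spike value need be isolated in it; one genuinely needs quantitative control of $|\mu_{n_j}|$ on the active intervals for $j\neq k$, obtained by a further diagonal extraction.)

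The repair is to separate the distinguished points from the rest of $\supp(\mu_{n_k})$ inside $X$ rather than inside $\mathbb{R}$: since $X$ is Tychonoff and each support is finite, there is a continuous $g_k$ with $g_k(x)=\mu_{n_k}(\{x\})/|\mu_{n_k}(\{x\})|$ for $x\in\supp(\mu_{n_k})$, so that $\mu_{n_k}(g_k)=\|\mu_{n_k}\|=1$; the unbounded $f$ is then used only for bookkeeping, namely to localize $g_k$ inside sets of the form $\{r_k<|f|<r_{k+2}\}$ for a rapidly increasing sequence $(r_k)$ chosen so that these regions meet only finitely many of the earlier supports, which makes $\sum_k g_k$ locally finite and hence continuous while keeping $\mu_{n_k}(\sum_j g_j)$ bounded away from $0$. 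This is essentially the elementary proof the authors refer to in \cite[Section 4]{KSZ}. The paper itself takes a shorter route: it packages $(\mu_n)_n$ as a continuous linear operator $T:C_p(X)\to C_p(S)$ with $S=\{0\}\cup\{1/n:n\ge 1\}$ and quotes van Mill's theorem that $T$-supports of bounded sets are bounded. Either of these works; the pure $h\circ f$ ansatz does not.
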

\begin{proof}
We sketch the proof of the proposition using operators between linear spaces. In \cite[Lemma 4.11]{KSZ} we provide a more direct and elementary proof.

Let $S = \{0\}\cup\{1/n: n=1,2,\dots\}$ be equipped with the Euclidean topology. We can define a continuous linear operator $T: C_p(X)\to C_p(S)$ in the following way: for $f\in C_p(X)$, put $T(f)(0) = 0$ and $$T(f)(1/(n+1)) = \mu_n(f)$$ for $n \in\omega$.
From the definition of a \jns\ immediately follows that the operator $T$ is well-defined. Observe that, for any $n\ge 1$, the support $\supp_T(1/n)$ of $1/n$  in $X$ with respect to $T$ (see \cite[Chapter 6.8]{vM2}) is equal to $\supp(\mu_n)$. Obviously the set $A= \{1/n: n\ge1\}$ is bounded in $S$, $S$ being compact. By Theorem 6.8.3 in \cite{vM2}, the support  $$\supp_T(A) = \bigcup_{n\ge 1}\supp_T(1/n) = \bigcup_{n\in\omega}\supp(\mu_n)$$ is bounded in $X$.
\end{proof}

\begin{example}\label{bjnp_not_jnp}
There exists a countable space $X$ such that $C_{p}(X)$  has the \BJNP, but does not have the \JNP.
\end{example}

\begin{proof}
Recall the well known method of obtaining  a countable space $N_F$ associated with a filter $F$ on a countable set $T$ (we consider only free filters
on $T$, i.e., filters containing all cofinite subsets of $T$).

$N_F$ is the space $T\cup\{\infty\}$, where $\infty\not\in T$, equipped with the following
topology:
All points of $T$ are isolated and the family
$\{A\cup\{\infty\}: A\in F\}$ is
a neighborhood base at $\infty$. One can easily observe that basic open sets are also closed, therefore the space $N_F$ is Tychonoff.

Let $F$ be the filter on $\omega$ consisting of sets of density $1$, i.e.,
$$F= \Big\{A\subset\omega: \lim_{n}\frac{|A\cap\{1,2,\dots,n\}|}{n} = 1\Big\}\,.$$
We will show that the space $X= N_F$ has the required properties (this space in the context of function spaces was investigated in \cite{DMM}).

Let $S_n = \{2^{n}+1,2^{n}+2,\dots,2^{n+1}\},\ n\in\omega$, and

\smallskip

$$\lambda_n = \sum\left\{\frac{1}{2^{n+1}}\delta_k: k\in S_n\right\} - \frac{1}{2}\delta_\infty\,.$$

\smallskip

Let us check that $(\lambda_n)_{n\in\omega}$ is a \bjns. Take a bounded continuous function $f$ on $N_F$; for simplicity, we can assume that $|f|\le 1$. Fix an $\varepsilon > 0$, and find a set $A\in F$ such that the image under $f$ of the neighborhood $A\cup\{\infty\}$ of $\infty$ has the diameter less than $\varepsilon$. Choose $k\in\omega$ such that, $$\frac{|A\cap\{1,2,\dots,m\}|}{m}> 1 - \varepsilon,\quad  \mbox{for } m > k\,.$$
Observe that, for $n > k$, we have $2^{n+1} > k$, $|S_n|=2^n$ and $S_n\subset  \{1,2,\dots,2^{n+1}\}$, hence the above inequality applied for $m = 2^{n+1}$, gives us the estimate $|S_n\setminus A| < 2^{n+1}\varepsilon$. Hence, for $n > k$, we have
\begin{eqnarray*}
|\lambda_n(f)| &=& \big|\sum\left\{\frac{1}{2^{n+1}}f(k): k\in S_n\right\} - \frac{1}{2}f(\infty)\big| \le \\ &\le& \sum\left\{\frac{1}{2^{n+1}}|f(k) - f(\infty)|: k\in S_n\right\} =\\ &=& \sum\left\{\frac{1}{2^{n+1}}|f(k) - f(\infty)|: k\in S_n\cap A\right\} +\\ &+& \sum\left\{\frac{1}{2^{n+1}}|f(k) - f(\infty)|: k\in S_n\setminus A\right\}<\\ &<& |S_n\cap A|\cdot\frac{\varepsilon}{2^{n+1}} + |S_n\setminus A|\cdot\frac{2}{2^{n+1}} < 2^n\cdot\frac{\varepsilon}{2^{n+1}} + 2^{n+1}\varepsilon\cdot\frac{2}{2^{n+1}} = \frac52\varepsilon\,,
\end{eqnarray*}
which shows that the sequence $(\lambda_n(f))$  converges to $0$.

It is clear that for any \jns\ $(\mu_n)$ on a space $X$, the union $S$ of supports of $\mu_n$ is infinite, and by Proposition \ref{union_supp_bounded} $S$ is also bounded. Since all bounded subsets of $N_F$ are finite (cf.\ \cite[Example 7.1]{DMM}), $C_{p}(N_F)$ does not have the \JNP.\end{proof}

Recall that $C_p^\ast(X)$ is the subspace of $C_p(X)$ consisting of bounded functions.

\begin{proposition}\label{bjnp_beta_c_0}
For every space $X$, if $C_{p}(X)$ has the \BJNP, then $C_p(\beta X)$ has the JNP and $C(\beta X)$ contains a complemented copy of the Banach space $c_{0}$.
\end{proposition}
\begin{proof}
Assume that a space $X$ is such that $C_p(X)$ has the \BJNP. Every \bjns\ on a space $X$ is trivially a \jns\ on $\beta X$, so $C_p(\beta X)$ has \JNP\ and thus contains a complemented copy of $(c_0)_p$ by Theorem \ref{ba-ka1}. By \cite[Proposition 6.1]{KSZ} the space $C(\beta X)$ contains a complemented copy of $c_0$.
\end{proof}

Note that the above proposition remains true if we replace $\beta X$ by any compact space $K$ containing $X$.

\begin{theorem}\label{bjnp_c_0}
For a space $X$ the following conditions are equivalent:
\begin{enumerate}
\item $C_{p}(X)$ has the \BJNP;
\item $C_p^\ast(X)$ has a complemented copy of $(c_0)_{p}$;
\item $C_p^\ast(X)$ has a quotient isomorphic to $(c_0)_{p}$;
\item $C_p^\ast(X)$ admits a continuous linear surjection onto $(c_0)_{p}$.
\end{enumerate}
\end{theorem}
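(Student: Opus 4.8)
The plan is to mimic the proof of Theorem~\ref{ba-ka1} (the equivalence (1)$\Leftrightarrow$(2)$\Leftrightarrow$(3)$\Leftrightarrow$(4) there), replacing $C_p(X)$ throughout by $C_p^\ast(X)$ and ``\jns'' by ``\bjns''. The crucial observation is that a \bjns\ $(\mu_n)$ on $X$ is, by definition, precisely a sequence of norm-one finitely supported measures that converges to $0$ pointwise on $C_p^\ast(X)$; in other words, it is a ``Josefson--Nissenzweig sequence for the space $C_p^\ast(X)$''. Since $C_p^\ast(X)$ is a dense linear subspace of $C_p(\beta X)$ (every bounded continuous function on $X$ extends over $\beta X$), and the finitely supported measures on $X$ sit inside $C(\beta X)^\ast$, the dual-space machinery behind Theorem~\ref{ba-ka1} applies verbatim. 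So the implications (2)$\Rightarrow$(3)$\Rightarrow$(4) are trivial (a complemented copy yields a quotient, a quotient yields a surjection), and the whole content is in (1)$\Rightarrow$(2) and (4)$\Rightarrow$(1).

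For (1)$\Rightarrow$(2): given a \bjns\ $(\mu_n)$, I would first normalize and, exactly as in the proof of Proposition~\ref{lastcase}, thin out and modify the sequence so that the supports $A_n=\supp(\mu_n)$ become pairwise disjoint (one does this by a standard diagonal/pigeonhole argument on the finitely supported measures, using that $\mu_n(f)\to 0$ for each fixed $f\in C_p^\ast(X)$, in particular for characteristic-like bounded functions — passing to a subsequence costs nothing since a subsequence of a \bjns\ is again a \bjns). Having disjoint supports, for each $a\in A=\bigcup_n A_n$ pick a bounded continuous $g_a\colon X\to[0,1]$ with $g_a(a)=1$ and $g_a\equiv 0$ off a small neighbourhood $W_a$ of $a$, with the $W_a$ chosen pairwise disjoint and with the blocks $W_n=\bigcup_{a\in A_n}W_a$ pairwise disjoint (possible because the $A_n$ are finite and pairwise disjoint and $X$ is Tychonoff). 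Set $f_n=\sum_{a\in A_n}\sgn(\mu_n(a))\,g_a\in C_p^\ast(X)$, so that $\mu_n(f_n)=\|\mu_n\|=1$, $\|f_n\|_\infty\le 1$, and the $f_n$ have pairwise disjoint supports. Then $S\colon (c_0)_p\to C_p^\ast(X)$, $S((t_n))=\sum_n t_n f_n$, is a well-defined continuous linear injection onto a subspace $Z$, and $T\colon C_p^\ast(X)\to\mathbb R^\omega$, $T(f)(n)=\mu_n(f)$, is continuous with $T[C_p^\ast(X)]\subseteq (c_0)_p$ because $(\mu_n)$ is a \bjns; the identities $(T\rstr Z)\circ S=\Id_{(c_0)_p}$ and $S\circ(T\rstr Z)=\Id_Z$ follow from $\mu_n(f_m)=\delta_{nm}\|\mu_n\|$ (using disjoint supports and $\|f_m\|_\infty\le1$), so $P=S\circ T$ is a continuous projection of $C_p^\ast(X)$ onto $Z\cong (c_0)_p$.

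For (4)$\Rightarrow$(1): let $Q\colon C_p^\ast(X)\to (c_0)_p$ be a continuous linear surjection. Dualizing, $Q^\ast$ sends the standard biorthogonal functionals $e_n^\ast\in ((c_0)_p)^\ast=\ell_1$ to functionals on $C_p^\ast(X)$; since $(c_0)_p$ carries the product topology, $e_n^\ast\to 0$ pointwise on $(c_0)_p$, hence $Q^\ast(e_n^\ast)\to 0$ pointwise on $C_p^\ast(X)$. Each $Q^\ast(e_n^\ast)$ is a continuous linear functional on $C_p^\ast(X)$, hence (by the description of the dual, cf.\ the remarks at the start of Section~\ref{section_theorem_solution}) a finitely supported signed measure $\mu_n$ on $X$; surjectivity of $Q$ forces the $\mu_n$ to be eventually nonzero, and after discarding finitely many terms and normalizing we obtain norm-one finitely supported measures with $\mu_n(f)\to 0$ for every $f\in C_p^\ast(X)$, i.e.\ a \bjns. (One must check that normalization does not destroy $\mu_n(f)\to0$: this needs a uniform lower bound $\|Q^\ast(e_n^\ast)\|\ge c>0$, which follows from $Q$ being an open surjection onto $(c_0)_p$ — equivalently from a quantitative form of surjectivity; this is the one point requiring care and is the analogue of the corresponding step in Theorem~\ref{ba-ka1}.)

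The main obstacle is exactly that last normalization issue in (4)$\Rightarrow$(1): a priori the functionals $Q^\ast(e_n^\ast)$ could have norms tending to $0$, and then dividing by the norm could spoil the pointwise convergence to $0$. The fix is to use that a continuous linear surjection between these (ultrabornological-type) spaces is open, so there is a bounded set $B\subseteq C_p^\ast(X)$ with $Q[B]\supseteq$ a neighbourhood of $0$ in $(c_0)_p$ containing all $e_n/\|\cdot\|$-scaled basis vectors up to a fixed size, yielding $\sup_{f\in B}|\langle Q^\ast(e_n^\ast),f\rangle|\ge c$ for some $c>0$ independent of $n$; since $B$ is pointwise bounded this does not immediately give $\|\mu_n\|\ge c$, so one instead works with the restricted functionals on the normed space $C^\ast(X)=(C(X),\|\cdot\|_\infty)$, on which openness of the induced surjection onto $(c_0)_p$ does give the needed uniform lower bound on the norms. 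This is precisely the argument already used for Theorem~\ref{ba-ka1}, and I would cite it rather than reprove it in full.
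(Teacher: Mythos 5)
Your directions (2)$\Rightarrow$(3)$\Rightarrow$(4) and (4)$\Rightarrow$(1) are fine and the latter is essentially the paper's argument: identify $p_n\circ T$ with a finitely supported measure $\lambda_n$, get $\|\lambda_n\|\le\|T\|$ by viewing $T$ as a closed-graph (hence continuous) surjection between the Banach spaces $C^\ast(X)$ and $c_0$, and get the uniform lower bound $\|\lambda_n\|\ge c>0$ from the Open Mapping Theorem applied to that Banach-space surjection; normalizing then yields a \bjns.

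There is, however, a genuine gap in your (1)$\Rightarrow$(2). The first step — ``thin out and modify the sequence so that the supports $A_n=\supp(\mu_n)$ become pairwise disjoint'' — is false in general, and the rest of your construction (pairwise disjoint neighbourhoods $W_a$, the functions $f_n$ with disjoint supports, the identity $\mu_m(f_n)=\delta_{nm}$) depends on it. The paper's own Example \ref{bjnp_not_jnp} is a counterexample: on $N_F$ the measures $\lambda_n=\sum_{k\in S_n}2^{-(n+1)}\delta_k-\tfrac12\delta_\infty$ form a \bjns\ in which \emph{every} $\lambda_n$ charges the point $\infty$ with mass $\tfrac12$, so no subsequence has pairwise disjoint supports; and you cannot simply delete the common atom, since the truncated, renormalized measures are positive of total mass $1$ and fail to converge to $0$ on the constant function $1$. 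The underlying error is that $\mu_n(f)\to0$ for all bounded continuous $f$ does not control $|\mu_n|(\{x\})$ at a non-isolated point $x$, because any bump function at $x$ also sees the nearby part of $\supp(\mu_n)$; cancellation between the positive and negative parts of $\mu_n$ on overlapping supports is exactly the difficulty here. What is true, and what one actually needs, is weaker: one can choose functions $f_n$ with pairwise disjoint supports such that $\mu_n(f_n)$ is bounded away from $0$ and $\mu_m(f_n)=0$ for $m\ne n$, without the $f_n$ capturing all of $\|\mu_n\|$ (in the $N_F$ example, $f_n=$ the indicator of $S_n$ works). This selection requires a more careful argument than pigeonholing on supports; the paper avoids reproving it by invoking Lemma~1 of \cite{BKS1} with $K=\beta X$ and checking that the relevant subspace contains $C_p^\ast(X)$ and that the projection's range lies in $C_p^\ast(X)$. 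You should either cite that lemma as well or supply the sliding-hump-type selection of the $f_n$; as written, your construction does not go through.
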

\begin{proof}
The implication (1)$\Rightarrow$(2) follows from Lemma 1 in \cite{BKS1} and its proof.  It is enough to apply this lemma for $K= \beta X$, and observe that the subspace $L$ of $C_p(X)$ given by this lemma contains $C_p^\ast(X)$ and the range of the projection $P$ from $L$ onto the subspace isomorphic to $(c_0)_p$ is contained in $C_p^\ast(X)$.

Implications (2)$\Rightarrow$(3)$\Rightarrow$(4) are obvious.

(4)$\Rightarrow$(1). Let $T: C_p^\ast(X)\to (c_0)_{p}$ be a continuous linear surjection. For each $n\in\omega$, let $p_n: (c_0)_{p}\to \mathbb{R}$ be the projection onto the $n$th axis, and $\lambda_n$ be the finitely supported signed measure on $X$ corresponding to the functional $p_n\circ T$. Let $C^\ast(X)$ be the Banach space of all bounded continuous functions on $X$, equipped with the standard supremum norm. By the Closed Graph Theorem, $T$ can be treated as a continuous linear surjection between the Banach spaces $C^\ast(X)$ and $c_0$, which gives the estimate $\|\lambda_n\|\le \|T\|$ for all $n$. By the Open Mapping Theorem we can also get a constant $c>0$ such that $\|\lambda_n\|\ge c$ for all $n$. Now it is clear that $(\lambda_n/\|\lambda_n\|)_{n\in\omega}$ is a \bjns\ on $X$.
\end{proof}

\begin{corollary}\label{corollary_cpstar_no_c0_cp_no_c0}
For every space $X$, if $C_p^*(X)$ does not have any complemented copy of $(c_0)_p$, then $C_p(X)$ does not have it either.
\end{corollary}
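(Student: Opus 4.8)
The plan is to argue by contraposition: assuming $C_p(X)$ contains a complemented copy of $(c_0)_p$, I will show that $C_p^*(X)$ contains one as well. The two tools needed are Theorem~\ref{ba-ka1} (characterizing the \JNP\ of $C_p(X)$) and Theorem~\ref{bjnp_c_0} (characterizing the \BJNP\ in terms of complemented copies of $(c_0)_p$ in $C_p^*(X)$), both already available in the text.

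First I would invoke the equivalence (1)$\Leftrightarrow$(2) of Theorem~\ref{ba-ka1}: if $C_p(X)$ has a complemented copy of $(c_0)_p$, then $C_p(X)$ has the \JNP, i.e.\ $X$ admits a \jns\ $(\mu_n)_n$. Next I would use the elementary observation, recorded at the start of the section on the Bounded Josefson--Nissenzweig Property, that every \jns\ is in particular a \bjns\ (since convergence $\mu_n(f)\to 0$ for all $f\in C_p(X)$ trivially implies it for all bounded $f\in C_p(X)$). Hence $X$ admits a \bjns, which means $C_p(X)$ has the \BJNP.

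Finally I would apply the implication (1)$\Rightarrow$(2) of Theorem~\ref{bjnp_c_0}: since $C_p(X)$ has the \BJNP, the space $C_p^*(X)$ contains a complemented copy of $(c_0)_p$. This is exactly the contrapositive of the desired statement, so the proof is complete.

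There is essentially no obstacle here: the corollary is a purely formal consequence of the two displayed theorems together with the trivial remark that $\JNP\Rightarrow\BJNP$; the only care needed is to line up the directions of the equivalences correctly (one wants $(1)\Rightarrow(2)$ of Theorem~\ref{ba-ka1} and $(1)\Rightarrow(2)$ of Theorem~\ref{bjnp_c_0}). No separate construction or estimate is required.
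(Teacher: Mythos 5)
Your proposal is correct and is essentially identical to the paper's own proof: contraposition via Theorem~\ref{ba-ka1} ($(c_0)_p$ complemented in $C_p(X)$ implies the \JNP), the trivial implication \JNP$\Rightarrow$\BJNP, and Theorem~\ref{bjnp_c_0} to land a complemented copy of $(c_0)_p$ in $C_p^*(X)$. (Only a cosmetic slip: in your closing remark the direction of Theorem~\ref{ba-ka1} you actually need is (2)$\Rightarrow$(1), not (1)$\Rightarrow$(2); since it is an equivalence this is immaterial.)
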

\begin{proof}
If $C_p(X)$ does have a complemented copy of $(c_0)_p$, then by Theorem \ref{ba-ka1} the space $C_p(X)$ has the \JNP\  and so it has the \BJNP, which, by Theorem \ref{bjnp_c_0}, implies that $C_p^*(X)$ has a complemented copy of $(c_0)_p$.
\end{proof}


\begin{corollary}
There exists a countable space $X$ such that:
\begin{enumerate}
	\item $C_p^\ast(X)$ has a complemented copy of $(c_0)_{p}$,
	\item $C_p(X)$ does not have such a copy,
	\item the Banach space $C(\beta X)$ contains a complemented copy  of the Banach space $c_{0}$.
\end{enumerate}
\end{corollary}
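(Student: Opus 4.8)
The plan is to combine Example~\ref{bjnp_not_jnp} with Theorems~\ref{ba-ka1} and~\ref{bjnp_c_0}, so that the proof is essentially a matter of citation. I would take $X$ to be the countable space $N_F$ of Example~\ref{bjnp_not_jnp}, where $F$ is the filter of density-one subsets of $\omega$; recall that for this $X$ the space $C_p(X)$ has the \BJNP\ but does not have the \JNP.

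First I would apply the implication (1)$\Rightarrow$(2) of Theorem~\ref{bjnp_c_0}: since $C_p(X)$ has the \BJNP, the space $C_p^\ast(X)$ contains a complemented copy of $(c_0)_p$. Next I would apply the equivalence (1)$\Leftrightarrow$(2) of Theorem~\ref{ba-ka1}: since $C_p(X)$ does not have the \JNP, it contains no complemented copy of $(c_0)_p$. Together these two observations show that $X$ has the two required properties.

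For the final ``Consequently'' clause, I would invoke the corollary stated immediately after Theorem~\ref{bjnp_c_0}: since $C_p(X)$ has the \BJNP, the Banach space $C(\beta X)$ contains a complemented copy of $c_0$. Equivalently, one notes that a complemented copy of $(c_0)_p$ in $C_p^\ast(X)$ is, by the Closed Graph Theorem, a complemented copy of $c_0$ in the Banach space $C^\ast(X)$, which is isometric to $C(\beta X)$ (here $X$ is countable, in particular Tychonoff, and $\beta X$ is its \v Cech--Stone compactification). As every step is a direct reference to an already established result, there is no genuine obstacle in this corollary; the substantive content lies in the construction of Example~\ref{bjnp_not_jnp} (the density filter together with the explicit measures $\lambda_n$) and in the proof of Theorem~\ref{bjnp_c_0}, the only minor point to record being the standard identification $C^\ast(X)\cong C(\beta X)$.
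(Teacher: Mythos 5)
Your proposal is correct and is essentially identical to the paper's proof, which likewise just combines Example~\ref{bjnp_not_jnp} with Theorems~\ref{ba-ka1} and~\ref{bjnp_c_0} (and the corollary following the latter for the $C(\beta X)$ statement). Nothing further is needed.
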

\begin{proof}
Let $X$ be the countable space described in Example \ref{bjnp_not_jnp}. The space $C_p(X)$ has the \BJNP, so by Theorem \ref{bjnp_c_0} the space $C_p^*(X)$ has a complemented copy of $(c_0)_p$ and by Proposition \ref{bjnp_beta_c_0} the space $C(\beta X)$ has a complemented copy of $c_0$. On the other hand, $C_p(X)$ does not have the \JNP, which by Theorem \ref{ba-ka1} implies that it does not have any complemented copies of $(c_0)_p$.
\end{proof}

\section{A pseudocompact space $X$ such that its square $X\times X$ does not have a \bjns}\label{section_x_x_no_bjns}

First we will prove several auxiliary results. The proof of the
following standard fact is similar to that of \cite[Lemma
3.1]{marci}.

\begin{lemma}\label{lemma_Ma} Let
$\{S_n: n\in \omega\}$ be a family of pairwise disjoint  subsets of
$\omega$. Then, for every subset $C$ of $\beta\omega$ of the
cardinality less than $2^\omega$, there exists an infinite subset
$A\subset \omega$ such that $$C\cap\big(\clo_{\beta\omega}\bigcup_{n\in A}S_n\big)\subset\bigcup_{n\in A}\clo_{\beta\omega}S_n.$$
\end{lemma}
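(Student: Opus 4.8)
\textbf{Proof plan for Lemma \ref{lemma_Ma}.} The plan is to exploit the fact that $|C| < 2^\omega$ together with the structure of $\beta\omega$ to build the set $A$ by a diagonalization argument, ruling out one "bad" ultrafilter at a time. First I would recall the key topological input: for a subset $B \subseteq \omega$, a point $p \in \beta\omega$ lies in $\clo_{\beta\omega} B$ iff $B \in p$ (thinking of points of $\beta\omega$ as ultrafilters on $\omega$). Hence the conclusion we want is equivalent to saying: for every $p \in C$ such that $\bigcup_{n \in A} S_n \in p$, there is some single $n \in A$ with $S_n \in p$. Negating this: the "bad" points for a given $A$ are exactly those $p \in C$ for which $\bigcup_{n\in A} S_n \in p$ but $S_n \notin p$ for all $n \in A$.

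Next I would isolate the structure of a potential bad point. Note that points of $\omega$ itself (principal ultrafilters) are never bad, since if $p = k \in \bigcup_{n \in A} S_n$ then $k \in S_{n_0}$ for the unique $n_0$ with $k \in S_{n_0}$, and if moreover $n_0 \in A$ then $S_{n_0} \in p$. Actually the disjointness of the $S_n$ means each $k$ lies in at most one $S_n$; one must be slightly careful about $k \notin \bigcup_n S_n$, but such $k$ are not in $\clo_{\beta\omega}\bigcup_{n\in A}S_n$ anyway. So every bad point is a free ultrafilter $p \in \omega^*$. Now I would enumerate $C \cap \omega^* = \{p_\xi : \xi < \kappa\}$ for some $\kappa < 2^\omega$, and construct $A$ together with an auxiliary decreasing family of infinite subsets of $\omega$ — or, more cleanly, construct $A$ as the range of a strictly increasing function and ensure at stage $\xi$ that $p_\xi$ is not bad for the final $A$. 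The mechanism: if $p_\xi$ is such that for cofinitely many (indeed all) candidate indices $n$ we could still include, $S_n \notin p_\xi$, then including such an $n$ in $A$ is "safe with respect to making $S_n \in p_\xi$", but we must instead worry that $\bigcup_{n \in A}S_n \in p_\xi$. The cleaner route is: at stage $\xi$ we will guarantee $\bigcup_{n \in A} S_n \notin p_\xi$ unless some $S_n \in p_\xi$ with $n \in A$ — and since the $S_n$ are disjoint, at most one $n$ can have $S_n \in p_\xi$. So for each $\xi$ we just need to decide: is there an $n$ with $S_n \in p_\xi$? If yes, put that $n$ into $A$ (this single choice handles $p_\xi$, since then $p_\xi$ is not bad regardless of what else is in $A$). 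If no $n$ has $S_n \in p_\xi$, then we must ensure that the finally chosen $A$ satisfies $\bigcup_{n\in A}S_n \notin p_\xi$; equivalently, $\omega \setminus \bigcup_{n\in A} S_n \in p_\xi$.

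The heart of the construction is therefore the following book-keeping. Split the enumeration into $C_0 = \{p_\xi : \exists n\ S_n \in p_\xi\}$ and $C_1 = C \cap \omega^* \setminus C_0$. For $p \in C_0$ let $n(p)$ be the unique index with $S_{n(p)} \in p$; put $A' = \{n(p) : p \in C_0\}$. For the points in $C_1$, I need to thin out: I want an infinite $A'' \subseteq \omega$ with $\bigcup_{n \in A''} S_n \notin p$ for all $p \in C_1$, and then take $A = (A' \cup A'')$ — but I must double-check that adding $A'$ back doesn't reintroduce badness for $C_1$ points or lose infinitude. Actually the simplest is to do it all at once by a transfinite recursion of length $\kappa$ building an increasing sequence of finite approximations $a_\xi$ to $A$: at step $\xi$, if $\exists n\ S_n \in p_\xi$ add the least such $n$ not yet decided; otherwise (we are in case $C_1$) we use the following \emph{dichotomy/pigeonhole}: among the infinitely many indices $n$ not yet placed into our set-so-far, the set $\{n : S_n \in p_\xi\}$ is empty, so for \emph{every} infinite subset $J$ of the remaining indices, $\bigcup_{n \in J} S_n$ may or may not be in $p_\xi$; we pick an infinite $J_\xi$ of remaining indices with $\bigcup_{n \in J_\xi} S_n \notin p_\xi$ — this is possible because if \emph{every} infinite subset $J$ of the remaining index set $R$ had $\bigcup_{n\in J}S_n \in p_\xi$, then in particular $\bigcup_{n \in R_0}S_n \in p_\xi$ and $\bigcup_{n \in R_1}S_n \in p_\xi$ for a partition $R = R_0 \sqcup R_1$ into two infinite pieces, whence by the ultrafilter property their intersection, which is $\bigcup_{n\in R_0\cap R_1}S_n = \emptyset$... wait, $R_0 \cap R_1 = \emptyset$ so $\bigcup_{n\in R_0}S_n \cap \bigcup_{n\in R_1}S_n = \emptyset$ by disjointness of the $S_n$ — contradiction with $p_\xi$ being a filter. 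So indeed we can always shrink to avoid $p_\xi$. One must commit to keeping finitely many "already chosen" indices and passing to an infinite subset of the rest at each step, and at limit stages take unions/diagonal intersections; the bound $\kappa < 2^\omega$ is never actually used for a cardinality computation here, so I should re-examine whether the lemma is even true as stated or whether the $|C| < 2^\omega$ hypothesis enters only to guarantee that $C \cap \omega^*$ admits the recursion without exhausting something — and this is \textbf{the main obstacle}: controlling the limit stages of a recursion of uncountable length while keeping a set infinite. The standard fix (as in \cite[Lemma 3.1]{marci}) is a different packaging: one proves directly that the family $\{A \subseteq \omega \text{ infinite} : A \text{ works for } p\}$ is, for each single $p$, "large" (contains a "club-like" or "dense" subfamily of $[\omega]^\omega$ in a suitable sense — e.g. every infinite set has an infinite subset in it), and then one intersects $< 2^\omega$ many such families using a matrix/almost-disjoint coding of $[\omega]^\omega$. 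Concretely, fix an almost disjoint family $\{B_\alpha : \alpha < 2^\omega\} \subseteq [\omega]^\omega$; for each $p \in C$ there are $< 2^\omega$ values of $\alpha$ for which $B_\alpha$ is "bad for $p$" in the relevant sense (this is where an argument like the disjointness-of-unions pigeonhole above shows that the set of good $\alpha$ is co-small), and since $|C| \cdot (<2^\omega) < 2^\omega$ there is $\alpha$ good for all $p \in C$ simultaneously; take $A$ related to $B_\alpha$. I would follow whichever of these two packagings makes the limit-stage bookkeeping cleanest, most likely the almost-disjoint-family one, since it converts the transfinite recursion into a single cardinal-arithmetic inequality and sidesteps the infinitude-at-limits issue entirely; the verification that "for fixed $p$, only $<2^\omega$ many $B_\alpha$ are bad" is then the one genuinely substantive step, and it rests on the observation that distinct $B_\alpha$ capture "almost disjoint" amounts of the $S_n$'s, so an ultrafilter can contain $\bigcup_{n \in B_\alpha \text{-indexed}} S_n$ without witnessing an individual $S_n$ for at most... — here one needs that the map $\alpha \mapsto$ (the trace pattern of $p$ on the $S_n$'s selected by $B_\alpha$) cannot be bad too often, which again reduces to the pigeonhole that an ultrafilter cannot contain two disjoint sets.
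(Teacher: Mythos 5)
Your final packaging is the paper's proof: fix an almost disjoint family $\mathcal A\subset[\omega]^\omega$ of size $\mathfrak c$ and argue that some member of it works. But the one genuinely substantive step --- the one you explicitly flag as needing verification and then leave trailing off --- is exactly where the whole argument lives, and your surrounding cardinal arithmetic does not cover for its absence. The claim ``for each $p\in C$ there are $<2^\omega$ bad $\alpha$, hence $|C|\cdot(<2^\omega)<2^\omega$'' is not valid in ZFC: if the per-point bounds are unbounded below $2^\omega$ and $\cf(2^\omega)\le|C|$ (e.g.\ $2^\omega=\aleph_{\omega_1}$ and $|C|=\omega_1$), the union of the bad index sets can have size $2^\omega$. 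What actually makes the count work is the much sharper fact that each $p\in C$ is bad for \emph{at most one} $\alpha$, equivalently that the sets
$$\Big(\clo_{\beta\omega}\bigcup_{n\in A}S_n\Big)\setminus\bigcup_{n\in A}\clo_{\beta\omega}S_n,\qquad A\in\mathcal A,$$
are pairwise disjoint; then at most $|C|<\mathfrak c=|\mathcal A|$ of them meet $C$, and any other $A\in\mathcal A$ satisfies the conclusion. The verification is the computation you gesture at but do not complete: for distinct $A,A'\in\mathcal A$, pairwise disjointness of the $S_n$ gives $\bigcup_{n\in A}S_n\cap\bigcup_{n\in A'}S_n=\bigcup_{n\in A\cap A'}S_n$, a \emph{finite} union of the $S_n$ (since $A\cap A'$ is finite), so an ultrafilter containing both big unions must contain some single $S_n$ with $n\in A\cap A'\subset A$ and hence is not bad for $A$; the relevant pigeonhole is ``an ultrafilter containing a finite union contains one of the pieces,'' not quite ``an ultrafilter cannot contain two disjoint sets.'' Your first, recursive approach is rightly abandoned: a decreasing chain of infinite sets of length $\kappa<2^\omega$ need not admit an infinite pseudo-intersection once $\kappa\ge\mathfrak t$, so the limit stages cannot be passed in ZFC.
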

\begin{proof}
Let $\mathcal A\subset [\omega]^\omega$ be an almost disjoint family
of size $|\mathcal A|=\mathfrak c$. It is clear that
$$ \big(\clo_{\beta\omega}\bigcup_{n\in A}S_n\big) \:\cap\: \big(\clo_{\beta\omega}\bigcup_{n\in A'}S_n\big)=
\bigcup_{n\in A\cap A'}\clo_{\beta\omega}S_n$$ for any distinct
$A,A'\in\mathcal A$. Thus the family
$$ \big\{\big(\clo_{\beta\omega}\bigcup_{n\in A}S_n\big)\setminus \bigcup_{n\in A}\clo_{\beta\omega}S_n\: :\: A\in\mathcal A\big\} $$
is disjoint, and hence one of its elements is disjoint from $C$ as
the latter has size $<\mathfrak c=|\mathcal A|$. This completes our
proof.
\end{proof}

Recall that {\em the Rudin--Keisler preorder} on  $\beta\omega$ is the binary
relation $\le_{\rk}$ on $\beta\omega$ given by $v\le_{\rk}u$ if
there is $f: \omega\to \omega$ such that $u \supset \{f^{-1}(A):
A\in v\}$, i.e., $v=f(u)$, where $f(u)=\{A\subset\omega: f^{-1}(A)\in
u\}$. This preorder becomes an order if we identify isomorphic
ultrafilters, see \cite[Corollary 9.3]{CN}. We say that ultrafilters
$u,v\in\omega^*$ are {\em incompatible} with respect to the
Rudin--Keisler preorder if there is no $w\in\omega^*$ such that
$w\le_{\rk}u$ and $w\le_{\rk}v$. An {\em \rk-antichain} in
$\omega^*$ is a subset of $\omega^*$ consisting of pairwise
incompatible ultrafilters  with respect to the Rudin--Keisler
preorder. The Continuum Hypothesis or even a weaker set-thoretic
assumptions, like Martin's Axiom, imply that there exists an
\rk-antichain of size continuum, consisting of weak $P$-points in
$\omega^*$, cf. \cite[Theorem 9.13, Lemma 9.14]{CN} or
\cite[Theorem 2]{Bl}. Blass and Shelah \cite{BS} proved the
consistency, relative to \textsf{ZFC}, of the statement that any two
ultrafilters in $\omega^*$ are compatible with respect to the
Rudin--Keisler preorder.

\begin{lemma}\label{RK_compatible}
Let $X,Y$ be spaces in \hs, and $(A^1_n)_{n\in\omega},
(A^2_n)_{n\in\omega}$ be two sequences of non-empty subsets of
$\omega$ such that, for each $k,n\in\omega, k\ne n$, i=1,2,
$A^i_k\cap A^i_n = \emptyset$. Put $$U = \bigcup_{n\in\omega}
\clo_{X\times Y}(A^1_n\times A^2_n)$$ and let
$f_1,f_2:\omega\to\omega$ be functions such  that, for $i=1,2$, $f_i(A^i_n) =
\{2n\}$, $f_i$ takes odd values on $\omega\setminus
\bigcup_{n\in\omega} A^i_n$, and is injective  on this complement. 

If $(u_1,u_2)\in
\clo_{X\times Y}(U) \setminus U$, then $u_1,u_2\in\omega^*$ and
$f_1(u_1)=f_2(u_2)\in\omega^*$, hence the ultrafilters $u_1$ and $u_2$ are compatible with respect to the Rudin--Keisler preorder. Moreover, $$u_1\in
\clo_{X}\big(\bigcup_{n\in\omega}A^1_n\big)\setminus
\bigcup_{n\in\omega}\clo_{X}(A^1_n) $$ and $$u_2\in
\clo_{Y}\big(\bigcup_{n\in\omega}A^2_n\big)\setminus
\bigcup_{n\in\omega}\clo_{Y}(A^2_n).$$
\end{lemma}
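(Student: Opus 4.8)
The plan is to unpack the hypothesis $(u_1,u_2)\in\clo_{X\times Y}(U)\setminus U$ step by step. First I would show $u_1,u_2\in\omega^*$. Indeed, for each fixed $n\in\omega$ the slices $U\cap(\{n\}\times Y)$ and $U\cap(X\times\{n\})$ meet only the finitely many (in fact at most one, since the $A^i_k$ are pairwise disjoint, but here we only need finiteness) pieces $\clo_{X\times Y}(A^1_k\times A^2_k)$ whose $A^1_k$ (resp.\ $A^2_k$) contains $n$; since $A^i_k\subseteq\omega$ is discrete in $X$ (resp.\ $Y$), each such slice is a finite, hence closed, subset of $\{n\}\times Y$ (resp.\ $X\times\{n\}$). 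As $\{n\}\times Y$ and $X\times\{n\}$ are open in $X\times Y$, a point of $\clo_{X\times Y}(U)$ lying in one of these slices would already lie in $U$; so neither coordinate of $(u_1,u_2)$ can be a natural number, giving $u_1,u_2\in\omega^*$ (here one uses that $X,Y\in\hs$ means $X\cap\beta\omega=X$, $X\setminus\omega\subseteq\omega^*$, etc.).

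Next I would pin down the Rudin--Keisler statement. Since $U\subseteq\bigcup_n(A^1_n\times A^2_n)\subseteq\omega\times\omega$, and $\Theta=\Phi_1\times\Phi_2$ (or simply the inclusion into $\beta\omega\times\beta\omega$) is continuous, we get $(u_1,u_2)\in\clo_{\beta\omega\times\beta\omega}(\bigcup_n A^1_n\times A^2_n)$. Now consider $g=f_1\times f_2:\omega\times\omega\to\omega\times\omega$ extended continuously to $\beta\omega\times\beta\omega$; on $\bigcup_n(A^1_n\times A^2_n)$ we have $g(a_1,a_2)=(2n,2n)$ whenever $(a_1,a_2)\in A^1_n\times A^2_n$, so $g$ maps $\bigcup_n(A^1_n\times A^2_n)$ into the diagonal $\{(2n,2n):n\in\omega\}$, whose closure in $\beta\omega\times\beta\omega$ is the closure of the diagonal, contained in $\{(v,v):v\in\beta\omega\}$. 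By continuity $g(u_1,u_2)=(f_1(u_1),f_2(u_2))$ lies in this closed set, so $f_1(u_1)=f_2(u_2)=:w$. Since $f_i$ takes only odd values off $\bigcup_n A^i_n$ and only even values on it, and since $u_i\in\omega^*$, the value $w$ is determined by the even part: $w\in\omega^*$ because $u_1\in\clo_{\beta\omega}(\bigcup_n A^1_n)$ (I would check $u_1$ cannot concentrate on the complement, using that $f_1$ is injective with odd values there — if $u_1$ contained $\omega\setminus\bigcup_n A^1_n$ then $f_1(u_1)$ would be an ultrafilter on the odd numbers, while $f_2(u_2)$ would sit on the evens, contradiction unless the whole thing is trivial, but then $f_1(u_1)=f_2(u_2)$ still forces them onto evens). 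In any case $w\le_{\rk}u_1$ and $w\le_{\rk}u_2$ via $f_1,f_2$, so $u_1$ and $u_2$ are RK-compatible.

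Finally I would derive the ``moreover'' clauses. We already have $u_1\in\clo_{\beta\omega}(\bigcup_n A^1_n)$, hence $u_1\in\clo_X(\bigcup_n A^1_n)$ since $X$ carries the subspace topology from $\beta\omega$ and $u_1\in X$. It remains to exclude $u_1\in\clo_X(A^1_n)$ for some single $n$. If $u_1\in\clo_X(A^1_{n_0})$, then from $f_1(u_1)=w$ and $f_1(A^1_{n_0})=\{2n_0\}$ we get $w=2n_0$, a principal ultrafilter — contradicting $w\in\omega^*$. Symmetrically for $u_2$. Putting these together gives both displayed memberships, and the proof is complete. The main obstacle I anticipate is the bookkeeping around the odd/even value trick: one must argue carefully that $u_i\in\omega^*$ together with $f_1(u_1)=f_2(u_2)$ actually forces that common value into $\omega^*$ and forces $u_i$ to ``see'' infinitely many of the $A^i_n$'s rather than concentrating on one block or on the complement; the pairwise disjointness of the $A^i_n$ and the injectivity of $f_i$ on the complement are exactly what make this work, so the care is in invoking them at the right moments.
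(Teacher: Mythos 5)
Your overall architecture matches the paper's: first show $u_1,u_2\in\omega^*$ via the open slices $\{n\}\times Y$ and $X\times\{n\}$, then compare $f_1(u_1)$ with $f_2(u_2)$, then deduce the ``moreover'' clause from non-principality of the common image. Your diagonal trick --- pushing $(u_1,u_2)$ forward under the extension of $f_1\times f_2$ into the closure of $\{(2n,2n):n\in\omega\}$, which lies in the closed diagonal of $\beta\omega\times\beta\omega$ --- is a genuinely slicker way to get $f_1(u_1)=f_2(u_2)$ than the paper's hands-on argument (which assumes $p_1\ne p_2$, picks $C\in p_1$ with $\omega\setminus C\in p_2$, and exhibits a basic neighbourhood of $(u_1,u_2)$ missing $V=\bigcup_n A^1_n\times A^2_n$). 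Two of your justifications are wrong but harmless: the sets $A^i_n$ are \emph{not} assumed finite in this lemma (in Theorem~\ref{main_ex} it is applied to infinite sets $V^i_k$), so the slice $U\cap(\{n\}\times Y)$ need not be finite --- it equals $\{n\}\times\clo_Y(A^2_k)$ for at most one $k$, and is closed because it is a closure, which is all you need; likewise $U\not\subseteq\omega\times\omega$, but $\clo_{X\times Y}(U)=\clo_{X\times Y}(V)$ since $V$ is dense in $U$, which is what your push-forward actually requires.

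The genuine gap is the step ``$w:=f_1(u_1)=f_2(u_2)\in\omega^*$''. Knowing $u_1\in\clo_{\beta\omega}(\bigcup_nA^1_n)$ only tells you that $w$ concentrates on the even numbers; it does not exclude $w$ being \emph{principal} at some $2k$. Your parenthetical odd/even discussion addresses a different point, and incorrectly: nothing forces $f_2(u_2)$ to ``sit on the evens'' a priori, since $u_2$ could concentrate on $\omega\setminus\bigcup_nA^2_n$; the clean way to see $u_i\in\clo(\bigcup_nA^i_n)$ is that $\clo(V)\subseteq\clo\big(\bigcup_nA^1_n\big)\times\clo\big(\bigcup_nA^2_n\big)$. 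More seriously, in your last paragraph you derive $u_1\notin\clo_X(A^1_{n_0})$ \emph{from} $w\in\omega^*$, whereas the only way to obtain $w\in\omega^*$ in the first place is the converse direction: if $\{2k\}\in w$ then $A^1_k=f_1^{-1}(\{2k\})\in u_1$ and $A^2_k=f_2^{-1}(\{2k\})\in u_2$, hence $(u_1,u_2)\in\clo_{X\times Y}(A^1_k\times A^2_k)\subseteq U$, contradicting $(u_1,u_2)\notin U$; and if $\{m\}\in w$ for odd $m$, then $f_i^{-1}(\{m\})$ is at most a singleton and cannot lie in the free ultrafilter $u_i$. This is exactly where the hypothesis $(u_1,u_2)\notin U$ must be used a second time, and your write-up never invokes it there. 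Once this short case analysis is inserted, the rest of your argument, including the ``moreover'' clause, goes through.
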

\begin{proof}
Let $(u_1,u_2)\in  \clo_{X\times Y}(U) \setminus U$.  For any
$n\in\omega$, the intersections $U\cap(\{n\}\times Y)$ and $U\cap(X
\times \{n\})$ are either empty or, for some $k\in\omega$, are equal
to $\{n\}\times \clo_Y (A^2_k)$ or $\clo_X (A^1_k)\times\{n\}$,
respectively. Therefore they are closed in $\{n\}\times Y$ and $X
\times \{n\}$, respectively. Since the sets $\{n\}\times Y$ and $X\times\{n\}$ are open in $X\times Y$, it follows that $u_1,u_2\in\omega^*$.

Let $V = \bigcup_{n\in\omega} A^1_n\times A^2_n$. Obviously,  $V$ is
dense in $U$, hence $(u_1,u_2)$ belongs to  $\clo_{X\times
Y}(V)\setminus U$. Set $p_i=f_i(u_i)$,  $i=1,2$.  If $p_1\ne p_2$,
then we can find $C\subset \omega$ such that $C\in p_1$ and
$(\omega\setminus C)\in p_2$. We claim that the neighborhood
$$O=\clo_{X}(f_1^{-1}(C))\times \clo_{Y}(f_2^{-1}(\omega\setminus
C))$$   of $(u_1,u_2)$ is disjoint with $V$.  Indeed, suppose that there are
$n\in\omega$ and
$$ (m_1,m_2)\in (A^1_n\times A^2_n)\cap \big( \clo_X(f_1^{-1}(C))\times\clo_Y(f_2^{-1}(\omega\setminus C))\big),$$
i.e., $m_1\in \clo_X(f_1^{-1}(C))$ and $m_2\in
\clo_Y(f_2^{-1}(\omega\setminus C))$. Since $\omega$ is a discrete subspace of
$X$ and $Y$, we conclude that $m_1\in f_1^{-1}(C)$ and $m_2\in
f_2^{-1}(\omega\setminus C)$, which means $2n=f_1(m_1)\in C$ and
$2n=f_2(m_2)\in\omega\setminus C$, a contradiction. Therefore $O\cap
V=\emptyset$, which together with $(u_1,u_2)\in O$ and
$(u_1,u_2)\in\clo_{X\times Y}(V)$ again leads to a contradiction.

Hence $p_1=p_2 = p$, and it remains to observe that $p\in\omega^*$.
Indeed, if $\{n\}\in p$, then we have two cases:

If $n$ is odd, then $|f_i^{-1}(n)|\le 1$, so  $f_i^{-1}(n)$ cannot belong to $ u_i\in\omega^*,\ i=1,2$.

If $n=2k$, then $A^i_k\in u_i,\ i=1,2$, hence  $$(u_1,u_2)\in
\clo_{X\times Y}(A^1_k\times A^2_k)\subset U,$$ which again gives a
contradiction.

 Finally, it is clear that  $u_1\in
\clo_{X}(\bigcup_{n\in\omega}A^1_n)$. If $u_1 \in \clo_{X}(A^1_n)$
for some $n$, we would get that $\{2n\}=f_1(A^1_n)\in f_1(u_1)=p$,
thus contradicting $p\in\omega^*$. Analogously for $u_2$.
 
\end{proof}

\begin{lemma}\label{jns_clopen}
Let $(\mu_n)_{n\in\omega}$ be a \bjns\ on a space $X$. If $Y$ is a clopen subset of $X$ such that the sequence $(|\mu_n|(Y))_{n\in\omega}$ does not converge to $0$, then there exist an increasing sequence $(n_k)$ and a \bjns\  $(\nu_k)_{k\in\omega}$ on $X$ such that $\supp\nu_k \subset Y\cap\supp \mu_{n_k}$ for every $k\in\omega$, in particular $(\nu_k)_{k\in\omega}$ is a \bjns\ on $Y$.
\end{lemma}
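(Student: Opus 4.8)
The plan is to extract from the given \bjns\ $(\mu_n)$ a subsequence whose supports live almost entirely inside $Y$, and then to repair the normalization and the convergence-on-bounded-functions condition using the fact that $Y$ is clopen. First, since $\big(|\mu_n|(Y)\big)_{n\in\omega}$ does not converge to $0$, we may fix $\delta>0$ and an increasing sequence $(n_k)_{k\in\omega}$ with $|\mu_{n_k}|(Y)\ge\delta$ for all $k$. For each $k$, write $\mu_{n_k} = \mu_{n_k}\rstr Y + \mu_{n_k}\rstr (X\sm Y)$, where $\mu_{n_k}\rstr Y$ denotes the signed measure obtained by keeping only the atoms of $\mu_{n_k}$ lying in $Y$; because $Y$ is clopen, both restrictions are again finitely supported signed measures on $X$, and the indicator function $\mathds{1}_Y$ is continuous and bounded, so $\mu_{n_k}(\mathds{1}_Y\cdot f) = (\mu_{n_k}\rstr Y)(f)$ for every $f\in C_p(X)$. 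Set $\nu_k = (\mu_{n_k}\rstr Y)/\|\mu_{n_k}\rstr Y\|$; note $\|\mu_{n_k}\rstr Y\| = |\mu_{n_k}|(Y)\in[\delta,1]$, so the normalization is harmless and $\|\nu_k\|=1$, while $\supp\nu_k \subseteq Y\cap\supp\mu_{n_k}$.

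It remains to check that $(\nu_k)_k$ is a \bjns, i.e.\ that $\nu_k(f)\to 0$ for every bounded $f\in C_p(X)$. Given such an $f$, the function $g = \mathds{1}_Y\cdot f$ is again bounded and continuous on $X$ (here again clopenness of $Y$ is what makes $g$ continuous), so $\mu_{n_k}(g)\to 0$ by hypothesis. But $\mu_{n_k}(g) = (\mu_{n_k}\rstr Y)(f) = \|\mu_{n_k}\rstr Y\|\cdot\nu_k(f)$, and since $\|\mu_{n_k}\rstr Y\|\ge\delta>0$ we conclude $\nu_k(f)\to 0$. Thus $(\nu_k)_k$ is a \bjns\ on $X$ with all supports contained in $Y$. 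Finally, since $Y$ is clopen in $X$, every bounded continuous function on $Y$ extends (by $0$ on $X\sm Y$) to a bounded continuous function on $X$, and the values of $\nu_k$ on such an extension agree with its values on the original function; hence $(\nu_k)_k$ is also a \bjns\ on $Y$.

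I do not expect a serious obstacle here: the only point requiring care is the repeated use of the hypothesis that $Y$ is \emph{clopen} — this is what guarantees that restricting a finitely supported signed measure to $Y$ stays inside $C_p(X)^*$, that $\mathds{1}_Y$ and the products $\mathds{1}_Y\cdot f$ are continuous, and that bounded continuous functions on $Y$ extend to bounded continuous functions on $X$. The bound $\|\mu_{n_k}\rstr Y\|\ge\delta$ is exactly what prevents the normalization from destroying the limit, so the choice of $\delta$ and the subsequence at the very start is the load-bearing step.
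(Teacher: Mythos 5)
Your argument is correct and is essentially the paper's own proof: pass to a subsequence on which $|\mu_{n_k}|(Y)$ is bounded below by some $\delta>0$, normalize the restrictions $\mu_{n_k}\rstr Y$, and use clopenness of $Y$ (equivalently, that bounded continuous functions on $Y$ extend by $0$, or that multiplication by the indicator of $Y$ preserves bounded continuity) to transfer the convergence $\mu_{n_k}(\cdot)\to 0$ to the $\nu_k$. No gaps; the only difference from the paper is that you spell out the bookkeeping in more detail.
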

\begin{proof}
Pick an $a>0$ and an increasing sequence $(n_k)$  such that
$|\mu_{n_k}|(Y))> a$ for all $k\in\omega$. Let $$\nu_k =
(\mu_{n_k}\rstr Y)/|\mu_{n_k}|(Y).$$ Since any continuous bounded
function on $Y$ can be extended to a continuous bounded function on
$X$ by declaring value $0$ outside $Y$, it easily follows that
$(\nu_k)_{k\in\omega}$ is a \bjns\ on $X$ and on $Y$.
\end{proof}

\begin{lemma}\label{refining_jns}
Let $X$ and $Y$ be spaces in \hs\ such that all ultrafilters  in
$X\setminus\omega$ and $Y\setminus\omega$ are weak $P$-points. If
$C_{p}(X\times Y)$ has the  \BJNP\ (\JNP), then there exist a \bjns\
(a \jns) $(\mu_n)_{n\in\omega}$ on $X\times Y$, and sequences
$(A^1_n)_{n\in\omega}, (A^2_n)_{n\in\omega}$ of finite subsets of
$X,Y$, respectively, such that
\begin{enumerate}
\item $A^i_k\cap A^i_n = \emptyset$ for $k,n\in\omega,\ k\ne n$, $i=1,2$;

\medskip

\item $A^1_k\cap A^2_n = \emptyset$ for $k,n\in\omega, k\ne n$;

\medskip

\item $\supp \mu_n \subset A^1_n\times A^2_n$ for each $n\in\omega$;

\medskip

\item $\bigcup_{n\in\omega} A^1_n \cup \bigcup_{n\in\omega} A^2_n$
is a discrete subspace of $\beta\omega$.
\end{enumerate}
\end{lemma}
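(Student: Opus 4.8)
The plan is to start from a \bjns\ $(\mu_n)_{n\in\omega}$ on $X\times Y$ (existing by hypothesis) and successively pass to cleaner subsequences and shrink supports until the combinatorial structure in (1)--(4) is achieved. First I would record that, for any space $Z\in\hs$ with all points of $Z\setminus\omega$ weak P-points, a bounded sequence of finitely supported measures whose supports have a fixed nonprincipal ultrafilter $u$ as an accumulation point can, after normalizing, be shown to ``leak'' a nontrivial amount of mass near $u$; the weak P-point property will be used to argue that if too much mass of the $\mu_n$'s accumulates at a single ultrafilter (or at countably many), one could build a continuous bounded function on $X\times Y$ (via the clopen structure of $\beta\omega$ inherited by $X$ and $Y$) witnessing $\mu_n(f)\not\to 0$, contradicting the \bjns\ property. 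So the first genuine step is a localization lemma: the ``mass of $\bigcup_n\supp\mu_n$ near the non-$\omega$ part'' must be asymptotically negligible, i.e.\ after replacing $(\mu_n)$ by a tail and renormalizing, we may assume $|\mu_n|$ restricted to a small neighborhood of any fixed ultrafilter tends to $0$; iterating (using the clopen base of $\beta\omega$ and Lemma~\ref{jns_clopen} to pass to clopen pieces) lets us assume the supports are ``spread out'' — concretely, that for each coordinate the union of supports is a discrete subset of $\beta\omega$, which is exactly (4). The weak P-point hypothesis is what makes this iteration terminate: a weak P-point cannot be an accumulation point of a countable subset of $\omega^*$, so after handling countably many ``bad'' ultrafilters the remainder is automatically discrete.

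Next, with the supports of all $\mu_n$ lying inside a fixed countable discrete $D\times E\subset\beta\omega\times\beta\omega$, I would pass to a subsequence so that the supports are finite and, by a pigeonhole/$\Delta$-system style argument on the (finite) projections $\pr_1\supp\mu_n$ and $\pr_2\supp\mu_n$, arrange that these projections are pairwise disjoint as $n$ varies — this is where I invoke Lemma~\ref{jns_clopen} repeatedly: around each point of the discrete sets $D,E$ choose a clopen neighborhood in $\beta\omega$, pairwise disjoint, and use that the \bjns\ property is inherited by clopen pieces to discard the overlap. Setting $A^1_n=\pr_1\supp\mu_n$ and $A^2_n=\pr_2\supp\mu_n$ (as finite subsets of $X$ and $Y$) then gives (1) and (3) immediately; condition (2) ($A^1_k\cap A^2_n=\emptyset$ for $k\ne n$) is obtained by one further thinning, choosing the clopen neighborhoods in the two coordinates to be disjoint from each other except along matched indices, which is possible because $D\cup E$ is a countable discrete subset of $\beta\omega$ and hence admits a pairwise disjoint clopen separation of all its points at once. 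Finally (4) is re-derived as the statement that $\bigcup_n A^1_n\cup\bigcup_n A^2_n$, being a subset of the discrete $D\cup E\subset\beta\omega$ chosen above, is discrete.

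The parenthetical ``(\jns)'' case is handled identically: every operation above — passing to a tail, renormalizing a clopen restriction, taking a subsequence, shrinking a support — preserves the property of being a \jns\ when one starts with a \jns, since Lemma~\ref{jns_clopen} is stated for \bjns s but its proof (extension by $0$ outside a clopen set) works verbatim for \jns s, and the localization lemma's contradiction-function is bounded, hence equally lethal to a \jns. I expect the main obstacle to be the first step: making precise and proving the localization/discreteness statement, i.e.\ showing that the weak P-point hypothesis forces the union of supports of a \bjns\ to sit (modulo a mass-negligible tail) on a discrete subset of $\beta\omega$ in each coordinate. This requires combining the weak P-point property with a careful bookkeeping of how much of the norm $\|\mu_n\|=1$ can be carried by points clustering at a given ultrafilter, and then using Proposition~\ref{union_supp_bounded}-type boundedness arguments together with the clopen algebra of $\beta\omega$ to convert ``non-negligible clustering mass'' into an explicit bounded continuous function on $X\times Y$ on which the measures fail to converge to $0$. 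Once that step is in place, steps two and three are routine $\Delta$-system and disjointification arguments.
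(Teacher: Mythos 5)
Your overall architecture (pass to subsequences, disjointify the supports into finite rectangles, use weak P-points for discreteness, handle the \jns\ case separately) matches the paper's in outline, but the two load-bearing steps are not the ones you propose, and your first step is false as stated. The engine of the disjointification is not a ``localization lemma'' about mass leaking near ultrafilters: the union of the supports of a \bjns\ can perfectly well accumulate at every ultrafilter (the canonical \bjns\ on $\beta\omega\times\beta\omega$ from Proposition~\ref{beta^2_has_JNP} is supported on $\omega\times\omega$), and no bounded continuous function yields a contradiction from that alone. What actually permits splitting the supports into disjoint rectangles is that each row $X\times\{q\}$ and column $\{p\}\times Y$ is a copy of $X$ or $Y$, and spaces in \hs\ have no \BJNP\ (Proposition~\ref{HS_properties}, ultimately the Grothendieck property of $\ell_\infty$); hence by Lemma~\ref{jns_clopen} the variation $|\mu_n|$ of each fixed row and column tends to $0$, so the finitely many already-used rows and columns can be discarded at each stage of a greedy diagonalization. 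Your sketch never invokes the failure of the \BJNP\ for $X$ and $Y$, which is the crucial ingredient. Note also that for $p\in\omega^*$ the column $\{p\}\times Y$ is not clopen, so Lemma~\ref{jns_clopen} does not apply to it directly (one must isolate $p$ by a clopen $V\times Y$ and extend functions from the column), and a $\Delta$-system extraction is not available for a countable sequence of finite sets of unbounded size, so the greedy argument cannot be replaced by a pigeonhole step.

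The discreteness in (4) is likewise not an analytic consequence of the weak P-point hypothesis. Weak P-points only guarantee that the countably many ultrafilters occurring in the projections of the supports form a discrete subset of $\omega^*$; they say nothing about whether those ultrafilters lie in the closure of the $\omega$-part of the supports, and every nonprincipal ultrafilter is an accumulation point of each of its members. The paper closes this gap combinatorially: it first arranges the $\omega$-part of the supports into pairwise disjoint finite blocks $B_i$ and then uses Lemma~\ref{lemma_Ma} (an almost disjoint family of size $\mathfrak c$ against a set of size $<\mathfrak c$) to select an infinite $T$ with $\clo_{\beta\omega}\big(\bigcup_{i\in T}B_i\big)$ missing all the relevant ultrafilters; your phrase ``after handling countably many bad ultrafilters the remainder is automatically discrete'' has no substitute for this. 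Finally, your claim that every operation preserves being a \jns\ is delicate: the perturbation estimate $\|\lambda_i-\nu_{k_i}\|\to 0$ controls $\lambda_i(f)-\nu_{k_i}(f)$ only for bounded $f$. The paper instead observes that the final supports sit inside $\bigcup_k\supp\nu_k$, which is bounded by Proposition~\ref{union_supp_bounded} when one starts from a \jns, and a \bjns\ supported on a bounded set is automatically a \jns.
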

\begin{proof}
First we will consider the \BJNP\ case. Let  $(\nu_k)_{k\in\omega}$
be a \bjns\ on $X\times Y$. Given $n\in\omega$, the subspaces
$\{n\}\times Y$ and $X\times\{n\}$ are homeomorphic to $Y$ and $X$, 
respectively, so by Propositions \ref{HS_properties}.(1) and \ref{HS_properties}.(3) the spaces
$C_{p}(\{n\}\times Y)=C_{p}^*(\{n\}\times Y)$ and $C_{p}^*(X\times\{n\})=C_{p}(X\times\{n\})$ do not have the
\BJNP. Since these subspaces
are clopen, Lemma \ref{jns_clopen} implies that
$$\lim\limits_{k}|\nu_k|(\{n\}\times Y) = 0,\,\,\
\lim\limits_{k}|\nu_k|(X\times\{n\}) = 0.$$ Let $\pi_X: X\times Y\to
X$ and $\pi_Y: X\times Y\to Y$ be projections. For a subset $B$ of
$\omega $, we denote $\pi_X^{-1}(B)\cup \pi_Y^{-1}(B)$ by
$\mathcal{C}(B)$. Clearly, for any finite $B\subset\omega$, we have
$\lim\limits_{k}|\nu_k|(\mathcal{C}(B)) = 0$.

By induction we choose  an increasing sequence $(k_i)_{i\in\omega}$
and a sequence $(B_i)_{i\in\omega}$ of pairwise disjoint finite
subsets of $\omega$ such that
\begin{itemize}
\item[$(i)$]  $(\omega\times\omega)\cap\supp \nu_{k_i} \subset \mathcal{C}(\bigcup_{j=0}^{i-1} B_j)\cup (B_i\times B_i)$;

\medskip

\item[$(ii)$] $|\nu_{k_i}|(\mathcal{C}(\bigcup_{j=0}^{i-1} B_j)) < 1/(i+1)$.
\end{itemize}
For $i\in\omega$,  let $$Z_i =(X\times Y)\setminus
\mathcal{C}\big(\bigcup_{j=0}^{i-1} B_j\big).$$ Define $$\lambda_i =
(\nu_{k_i}\rstr Z_i)/\|\nu_{k_i}\rstr Z_i\|.$$ Using condition $(ii)$ above,
one can easily verify that $\lim\limits_{i}\|\lambda_i - \nu_{k_i}\|
= 0$, hence $(\lambda_i)_{i\in\omega}$ is a \bjns\ on $X\times Y$.
The first condition implies that $$(\omega\times\omega)\cap\supp
\lambda_i\subset B_i\times B_i.$$
Let $$C = \big(\pi_X\big(\bigcup_{i\in\omega} \supp \lambda_i\big)\cup \pi_Y\big(\bigcup_{i\in\omega} \supp \lambda_i\big)\big) \setminus \omega \subset \omega^*.$$ By Lemma \ref{lemma_Ma} we can find an infinite subset $T\subset \omega$ such that
$$C\cap\big(\clo_{\beta\omega}\bigcup_{i\in T}B_i\big)=\emptyset.$$  Then the
 sequence $(\lambda_i)_{i\in T}$ has the property that the set
 $$D:  = \pi_X\big(\bigcup_{i\in T} \supp \lambda_i\big)\: \cup\:  \pi_Y\big(\bigcup_{i\in T} \supp \lambda_i\big)$$
is a discrete subspace of $\beta\omega$. Indeed, the set $D'= D\cap\omega^*$ is a discrete subspace of $\beta\omega$ because it is countable and consists of weak
$P$-points (and of course the closure of $D'$ is contained in the closed set $\omega^*$, therefore it is disjoint with any subset of $\omega$, in particular $D\cap\omega$). In addition,  $\clo_{\beta\omega}(D\cap \omega)$ is
disjoint with  $D'$ by our choice of $T$ because
$D\cap\omega\subset\bigcup_{i\in T}B_i$. Thus $D$ is a union of two
discrete subsets of $\beta\omega$ so that the closure of any of them
does not intersect the other one, and hence it is also discrete.

\smallskip

Set $D_X=\pi_X(\bigcup_{i\in T} \supp \lambda_i)$ and
$D_Y=\pi_Y(\bigcup_{i\in T} \supp \lambda_i)$, so that $D=D_X\cup
D_Y$. 
 Fix a point $p\in D_X$, and find a clopen  set $V$ in $X$ such
that $V\cap D_X = \{p\}$. Let $U = V\times Y$. $U$ is clopen in
$X\times Y$ and $$U\cap \bigcup_{i\in T} \supp \lambda_i\subset
\{p\}\times Y.$$ Observe that each continuous function on
$\{p\}\times Y$ is bounded (due to the pseudocompactness of $Y$) and thus can be extended to a bounded continuous function on
$U$, therefore, by Lemma  \ref{jns_clopen}, we conclude that
$$\lim\limits_{i\in T}|\lambda_i|(\{p\}\times Y) = 0.$$ In the same
way we can show that, for any $q\in D_Y$, we have $\lim\limits_{i\in
T}|\lambda_i|(X\times \{q\}) = 0$.

Now, we can repeat our inductive construction from the first part of the proof. We choose an increasing sequence $(i_n)_{n\in\omega},\ i_n\in T$, and sequences $(A^1_n)_{n\in\omega}, (A^2_n)_{n\in\omega}$ of finite subsets of $D_X,D_Y$, respectively, such that the following conditions are satisfied (where $E_n = \bigcup_{l=1,2}\bigcup_{j=0}^{n-1} A^l_j$):
\begin{itemize}
\item[$(a)$] $(A^1_n\cup A^2_n)\cap E_n = \emptyset$;

\medskip

\item[$(b)$] $\supp \lambda_{i_n}\subset \big(\pi_X^{-1}(X\cap  E_n)\cup \pi_Y^{-1}(Y\cap E_n)\big)\cup (A^1_n\times A^2_n)$;

\medskip

\item[$(c)$] $|\lambda_{i_n}|\big(\pi_X^{-1}(X\cap  E_n)\cup \pi_Y^{-1}(Y\cap E_n)\big) < 1/(n+1)$.
\end{itemize}

For $n\in\omega$, let
$$S_n =(X\times Y)\setminus \big(\pi_X^{-1}(X\cap  E_n)\cup \pi_Y^{-1}(Y\cap E_n)\big).$$

\smallskip

Define $\mu_n = (\lambda_{i_n}\rstr S_n)/\|\lambda_{i_n}\rstr S_n\|$\,.
Conditions (1) and (2) 
follow from $(a)$. Condition $(c)$ implies that
$$\lim\limits_{n}\|\lambda_{i_n} - \mu_{n}\| = 0,$$ hence
$(\mu_n)_{n\in\omega}$ is a \bjns\ on $X\times Y$. From condition
$(b)$ we deduce  that $$\supp \mu_n\subset A^1_n\times A^2_n.$$
Condition (4) 
follows from
inclusions $A^1_n\subset D_X$, $A^2_n\subset D_Y$.

Observe that the union of supports of measures $\mu_{n}$ that we
constructed  above is contained in  the union of supports of
measures $\nu_k$ from our initial \bjns. If we start with a \jns\
$(\nu_k)_{k\in\omega}$, then, by Proposition
\ref{union_supp_bounded}, $\bigcup_{k\in\omega}\supp(\nu_k)$ is
bounded in $X\times Y$, therefore, if we repeat the above argument,
the resulting \bjns\ $(\mu_n)_{n\in\omega}$ will be also a \jns. Indeed, let $f\in C(X\times Y)$ and $M>0$ be such that $|f(x)|<M$ for every $x\in\bigcup_{k\in\omega}\supp(\nu_k)$. Fix a retraction $g:\mathbb{R}\to[-M,M]$. Then, $g\circ f\in C_p^*(X\times Y)$ and $\mu_k(f)=\mu_k(g\circ f)$ for each $k\in\omega$, so $\lim\limits_{n}\mu_k(f)=0$.
\end{proof}

Before we prove Theorem~\ref{main_ex}, which is the main result of
this section, we will show its weaker version, since it has an
essentially simpler proof and a less technical set-theoretic
assumption.

\begin{theorem}
Assume that there exist two incompatible with respect to the Rudin--Keisler preorder weak $P$-points $u,v$ in $\omega^*$. Then there exist infinite pseudocompact spaces $X,Y$  in \hs\ such that $C_{p}(X\times Y)$ does not have the \BJNP, and hence it does not have the \JNP.
\end{theorem}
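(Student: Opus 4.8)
The plan is to realise $X$ and $Y$ as subspaces of $\beta\omega$ in which every non-isolated point is isomorphic to $u$, respectively to $v$. For each $A\in[\omega]^\omega$ fix an injection $g_A\colon\omega\to\omega$ with range $A$ and put $u_A=g_A(u)=\{B\subseteq\omega\colon g_A^{-1}(B)\in u\}$; then $u_A\in\clo_{\beta\omega}(A)\cap\omega^*$, and since $g_A$ is injective, $u_A\le_{\rk}u\le_{\rk}u_A$, so $u_A$ and $u$ are isomorphic by \cite[Corollary 9.3]{CN}. Let $X=\omega\cup\{u_A\colon A\in[\omega]^\omega\}$; an almost disjoint family of size $\mathfrak c$ shows $|X|=\mathfrak c$, and every infinite subset of $\omega$ has an accumulation point in $X$, so $X\in\hs$ by Proposition~\ref{charact_HS}. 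The analogous construction from $v$ yields $Y\in\hs$. Since being a weak $P$-point is invariant under homeomorphisms of $\beta\omega$, all points of $X\setminus\omega$ and of $Y\setminus\omega$ are weak $P$-points, so Lemma~\ref{refining_jns} is applicable to $X\times Y$.

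Now suppose towards a contradiction that $C_p(X\times Y)$ has the \BJNP. By Lemma~\ref{refining_jns} there exist a \bjns\ $(\mu_n)_n$ on $X\times Y$ and finite sets $A^1_n\subseteq X$, $A^2_n\subseteq Y$ satisfying conditions (1)--(4) of that lemma; in particular $D:=\bigcup_nA^1_n\cup\bigcup_nA^2_n$ is a discrete subspace of $\beta\omega$. Using this discreteness, I would pick a pairwise disjoint family $\{E_p\colon p\in D\cap\omega^*\}$ of infinite subsets of $\omega$ with $E_p\in p$ and $E_p\cap(D\cap\omega)=\emptyset$, and then replace each finite set $A^i_n$ by the subset $\hat{A}^i_n$ of $\omega$ obtained by keeping $A^i_n\cap\omega$ and adjoining $E_p$ for every ultrafilter $p\in A^i_n$. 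Conditions (1),(2) and the disjointness of the $E_p$ make $(\hat{A}^1_n)_n$ and $(\hat{A}^2_n)_n$ sequences of pairwise disjoint non-empty subsets of $\omega$, while $A^i_n\subseteq\clo_{\beta\omega}(\hat{A}^i_n)$. Put $U_n=\clo_X(\hat{A}^1_n)\times\clo_Y(\hat{A}^2_n)$ and $U=\bigcup_nU_n=\bigcup_n\clo_{X\times Y}(\hat{A}^1_n\times\hat{A}^2_n)$; then each $U_n$ is clopen in $X\times Y$, the $U_n$ are pairwise disjoint, and $\supp\mu_n\subseteq A^1_n\times A^2_n\subseteq U_n$.

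The key claim is that $U$ is closed, hence clopen. Applying Lemma~\ref{RK_compatible} to $(\hat{A}^1_n)_n,(\hat{A}^2_n)_n$ (with any detecting functions $f_1,f_2$ as in its statement), a point of $\clo_{X\times Y}(U)\setminus U$ would give ultrafilters $u_1\in X\cap\omega^*$ and $u_2\in Y\cap\omega^*$ compatible with respect to the Rudin--Keisler preorder, i.e.\ some $w\in\omega^*$ with $w\le_{\rk}u_1$ and $w\le_{\rk}u_2$; but $u_1$ is isomorphic to $u$ and $u_2$ is isomorphic to $v$, so $w\le_{\rk}u$ and $w\le_{\rk}v$, contradicting the incompatibility of $u$ and $v$. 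Thus $\clo_{X\times Y}(U)=U$, and $U$, each $U_n$, and $(X\times Y)\setminus U$ are clopen. Since $\supp\mu_n$ is finite and $X\times Y$ is Tychonoff, inside each $U_n$ I can interpolate a continuous $f_n\colon U_n\to[-1,1]$ with $f_n(z)=\sgn(\mu_n(\{z\}))$ for all $z\in\supp\mu_n$ (first interpolate by a continuous function, then truncate into $[-1,1]$). Define $f\colon X\times Y\to[-1,1]$ by letting $f$ equal $f_n$ on $U_n$ and $0$ on $(X\times Y)\setminus U$. As $\{U_n\colon n\in\omega\}\cup\{(X\times Y)\setminus U\}$ is a partition of $X\times Y$ into clopen sets, $f$ is continuous and bounded, while $\mu_n(f)=\sum_{z\in\supp\mu_n}\mu_n(\{z\})\sgn(\mu_n(\{z\}))=\|\mu_n\|=1$ for every $n$ --- contradicting that $(\mu_n)_n$ is a \bjns. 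Hence $C_p(X\times Y)$ has neither the \BJNP\ nor the \JNP.

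The main obstacle is the passage from the abstract conclusion of Lemma~\ref{refining_jns} to the clopen decomposition $\{U_n\}$. The finite support sets $A^i_n$ may contain ultrafilters, so Lemma~\ref{RK_compatible} does not apply to them directly; thickening them to infinite subsets of $\omega$ --- which is legitimate precisely because $D$ is discrete in $\beta\omega$ --- is what makes that lemma usable. It is then the Rudin--Keisler incompatibility of $u$ and $v$, transported to the ultrafilters of $X$ and $Y$ via their isomorphism classes, that forces $U$ to be clopen, and this is exactly the property needed to build the sign-interpolating function $f$.
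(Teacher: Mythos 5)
Your proposal is correct and follows essentially the same route as the paper: take all ultrafilters of $X$ isomorphic to $u$ and all of $Y$ isomorphic to $v$, reduce via Lemma~\ref{refining_jns}, thicken the discrete support sets to pairwise disjoint subsets of $\omega$, use Lemma~\ref{RK_compatible} plus the \rk-incompatibility of $u$ and $v$ to see that $U=\bigcup_n\clo_{X\times Y}(\hat A^1_n\times\hat A^2_n)$ is clopen, and derive a contradiction with a sign-interpolating bounded continuous function. The only (immaterial) difference is that the paper thickens every point of the supports to a set $U^i_p\subset\omega$, whereas you keep the integer points as they are and thicken only the ultrafilters.
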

\begin{proof} It is an easy and well known observation that, given an ultrafilter $w\in \omega^*$ and a set $A\in [\omega]^\omega$, there is an ultrafilter $w'$ in the closure of $A$ in $\beta\omega$, which is isomorphic to $w$. Therefore, we may take spaces $$X = \omega\cup\{u_A: A\in [\omega]^\omega\},\,\,\ Y = \omega\cup\{v_A: A\in [\omega]^\omega\}$$ in \hs\ such that all ultrafilters $u_A$ are isomorphic to $u$, and all ultrafilters $v_A$ are isomorphic to $v$. We will show that the product $X\times Y$ does not have a \bjns.

Suppose  the contrary, and let $$(\mu_n)_{n\in\omega},\,\,\,
(A^1_n)_{n\in\omega},\,\,\, (A^2_n)_{n\in\omega}$$ be sequences of
measures and sets given by Lemma \ref{refining_jns}. Let $$A^i =
\bigcup_{n\in\omega} A^i_n, \,\, i=1,2.$$ Recall that basic clopen
sets in $X$ and $Y$ are closures of subsets of $\omega$. For
$i=1,2$, since $A^i$ is discrete, using a simple induction, we can
find  a family $\{U^i_p: p\in A^i\}$ of pairwise disjoint
subsets of $\omega$, such that the closure of $U^i_p$ is a
neighborhood of $p$. For $i=1,2$ and $n\in\omega$, let $$V^i_n =
\bigcup\{U^i_p: p\in A^i_n\}.$$ For fixed $i$, the family
$\{V^i_n:  n\in\omega\}$ is disjoint, and we have
$$\supp\mu_n\subset \clo_{X\times Y}(V^1_n\times V^2_n).$$ From our
construction of spaces $X$ and $Y$ and Lemma \ref{RK_compatible} we
infer that the set $$U = \bigcup_{n\in\omega} \clo_{X\times
Y}(V^1_n\times V^2_n)$$ is clopen in $X\times Y$. Let $f_n:
\supp\mu_n\to [-1,1]$ be defined by  $$f_n(z) =
\mu_n(\{z\})/|\mu_n(\{z\})|$$ for $z\in\supp\mu_n$. Since
$\supp\mu_n$ is finite, we can extend each $f_n$ to a continuous
function 
$$F_n: \clo_{X\times Y}(V^1_n\times V^2_n)\to [-1,1].$$ Since the sets $\clo_{X\times Y}(V^1_n\times V^2_n)$, $n\in\omega$, are clopen and pairwise disjoint, the union of all $F_n$ is continuous on $U$, and we can extend this union to a bounded continuous $F$ on $X\times Y$, declaring value $0$ outside $U$. Clearly, for all $n\in\omega$, we have $\mu_n(F) = 1$, a contradiction.
\end{proof}

For a set $A$ by $\bigtriangleup_A$ we denote the diagonal $\{(a,a): a\in A\}$ in $A\times A$.

\begin{lemma}\label{avoid_diagonal}
Let $E,F$ be non-empty finite subsets of a set $X$, and $\mu$ be a signed measure on $E\times F$. Then there exist disjoint $G\subset E$ and $H\subset F$ such that $$|\mu|(G\times H)\ge |\mu|((E\times F)\setminus \bigtriangleup_{X})/6.$$
\end{lemma}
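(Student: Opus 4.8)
The plan is to prove the combinatorial lemma by a fixed-point/random-assignment style argument: split $E$ and $F$ into a bounded number of pieces via a single parameter (a map into $\{0,1,2\}$), observe that the off-diagonal mass of $\mu$ is distributed among the resulting rectangles, and extract one rectangle carrying at least a fixed fraction $1/6$ of the mass. The key point is that pairs $(e,f)\in (E\times F)\setminus\bigtriangleup_X$ are exactly those with $e\ne f$, and a ``random $3$-coloring'' separates any two distinct points with probability $2/3$.

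First I would set $m = |\mu|\big((E\times F)\setminus\bigtriangleup_X\big)$ and fix any function $c\colon E\cup F\to\{0,1,2\}$. For $i\ne j$ in $\{0,1,2\}$, let $G_{ij}=\{e\in E: c(e)=i\}$ and $H_{ij}=\{f\in F: c(f)=j\}$; these are disjoint subsets of $X$ since $i\ne j$. Now I would count, for a pair $(e,f)$ with $e\ne f$, how many of the (at most $6$) ordered pairs $(i,j)$ with $i\ne j$ satisfy $c(e)=i$ and $c(f)=j$: since $c(e)\ne c(f)$ is needed, exactly one pair $(i,j)$ works, provided $c(e)\ne c(f)$, and none otherwise. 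So instead I would average over colorings: pick $c$ uniformly at random, so that $\Pr[c(e)\ne c(f)] = 2/3$ for $e\ne f$. Then
\[
\mathbb{E}\Big[\sum_{i\ne j} |\mu|(G_{ij}\times H_{ij})\Big]
= \sum_{\substack{(e,f)\in E\times F\\ e\ne f}} |\mu(\{(e,f)\})|\cdot \Pr[c(e)\ne c(f)]
= \tfrac{2}{3}\, m,
\]
because for a pair with $c(e)\ne c(f)$ exactly one summand $(i,j)$ picks it up. Hence there exists a coloring $c$ with $\sum_{i\ne j}|\mu|(G_{ij}\times H_{ij})\ge \tfrac{2}{3}m$, and since the sum has at most $6$ terms, some term satisfies $|\mu|(G_{ij}\times H_{ij})\ge \tfrac{1}{9}m$. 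That already beats $m/6$? No — $1/9 < 1/6$, so I would instead use only a $2$-coloring.

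So the cleaner route: pick $c\colon E\cup F\to\{0,1\}$ uniformly at random, set $G=\{e\in E: c(e)=0\}$, $H=\{f\in F: c(f)=1\}$ (disjoint), and also consider $G'=\{e: c(e)=1\}$, $H'=\{f: c(f)=0\}$. For $e\ne f$, $\Pr[(e,f)\in G\times H\ \text{or}\ G'\times H'] = \Pr[c(e)\ne c(f)] = 1/2$, and the two events are disjoint. So $\mathbb{E}\big[|\mu|(G\times H)+|\mu|(G'\times H')\big] = \tfrac12 m$, giving a coloring with $|\mu|(G\times H)+|\mu|(G'\times H')\ge \tfrac12 m$, hence one of the two disjoint pairs has $|\mu|$-mass $\ge m/4 \ge m/6$. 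I expect the only subtlety — the ``main obstacle'' — is bookkeeping the diagonal correctly: one must ensure every off-diagonal atom of $\mu$ is counted (it is, since $e\ne f$ forces $c$ to possibly separate them and the chosen $G,H$ or $G',H'$ capture it) while no atom on $\bigtriangleup_X$ is ever counted (it is not, since $G$ and $H$ are disjoint subsets of $X$, so $G\times H$ misses $\bigtriangleup_X$ entirely, likewise $G'\times H'$). I would finish by noting $m/4 \ge m/6$, so the disjoint pair with the larger mass works, completing the proof with room to spare.
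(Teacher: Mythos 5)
Your final argument (the $\{0,1\}$-coloring version) is correct, and it proves a slightly stronger statement than the lemma asks for: a random $c\colon E\cup F\to\{0,1\}$ gives $\mathbb{E}\bigl[|\mu|(G\times H)\bigr]=m/4$ exactly, since each off-diagonal atom $(e,f)$ lands in $G\times H$ with probability $1/4$ and each diagonal atom with probability $0$; so some coloring already yields a disjoint pair with $|\mu|(G\times H)\ge m/4\ge m/6$, and the companion pair $G',H'$ is not even needed. This is a genuinely different route from the paper's. The paper argues deterministically: it splits $(E\times F)\setminus\bigtriangleup_X$ into the three rectangles $(E\setminus F)\times F$, $(E\cap F)\times(F\setminus E)$, and the off-diagonal part of $(E\cap F)^2$, handles the first two trivially (their factors are already disjoint), and for the square part runs a double-counting argument over all balanced bipartitions of $E\cap F$ to extract a product set carrying a $1/4$-fraction; combining the cases costs a factor, which is where the $1/6$ comes from. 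Your averaging over colorings of all of $E\cup F$ at once collapses that case analysis and improves the constant to $1/4$; the paper's argument is more elementary in that it avoids any probabilistic language, but is longer and yields the weaker bound. (Your abandoned three-coloring detour giving $1/9$ should simply be deleted; only the two-coloring computation is needed, and it stands on its own.)
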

\begin{proof}
Let $A= E\cap F$. 
First, we will show that there exist disjoint subsets $B,C$ of $A$ such that $$|\mu|(B\times C)\ge |\mu|((A\times A)\setminus \bigtriangleup_{A})/4.$$
The case when $|A|\le 1$ is easy, so we also assume that $n=|A|>1$.
Suppose the contrary, then, for any partition $(B,C)$ of $A$, we have $$|\mu|(B\times C) < |\mu|((A\times A)\setminus \bigtriangleup_{A})/4.$$

Let $\mathcal{P}$ be the family of all partitions $(B,C)$ of $A$ (we allow trivial partitions with one empty set). We have $|\mathcal{P}| = 2^n$ and each point from $(A\times A)\setminus \bigtriangleup_A$ appears in $2^{n-2}$ sets $B\times C$, where $(B,C)\in \mathcal{P}$. Hence we have
$$
2^{n-2}|\mu|((A\times A)\setminus \bigtriangleup_A) = \sum\{|\mu|(B\times C): (B,C)\in \mathcal{P}\}
<  2^n(|\mu|((A\times A)\setminus \bigtriangleup_{A})/4)
$$
which gives a contradiction.
\medskip

Next, observe that $$(E\times F)\setminus \bigtriangleup_{X} = \big((E\setminus F)\times F\big)\cup\big(A\times (F\setminus E)\big)\cup\big((A\times A)\setminus \bigtriangleup_A\big),$$ and the sets $A, E\setminus F, F\setminus E$ are all disjoint. If the variation $|\mu|$ of one of the rectangles $(E\setminus F)\times F$ and $A\times(F\setminus E)$ is at least equal to $|\mu|((E\times F)\setminus\bigtriangleup_{X})/6$, then we can take this rectangle as $G\times H$. If this is not the case, then we have $$|\mu|((A\times A)\setminus\bigtriangleup_A)>(4/6)\cdot|\mu|((E\times F)\setminus\bigtriangleup_{X}),$$ which implies that $$|\mu|(B\times C)>(1/4)\cdot(4/6)\cdot|\mu|((E\times F)\setminus\bigtriangleup_{X}),$$ and we can take $G=B$ and $H=C$.
\end{proof}

\begin{lemma}\label{avoid_null_seq}
Let $(\mu_n)_{n\in\omega}$ be a \bjns\ on a space $X$. If $Z$ is a subset of $X$ such that  $\lim\limits_{n}\|\mu_n\rstr Z\| = 0$, then there exist an increasing sequence $(n_k)$ and a \bjns\  $(\nu_k)_{k\in\omega}$ on $X$ such that $\supp\nu_k \subset \supp \mu_{n_k}\setminus Z$ for every $k\in\omega$.
\end{lemma}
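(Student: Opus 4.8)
The plan is to simply normalize the restrictions of the $\mu_n$ to $X\setminus Z$; no essential idea beyond that is needed. First I would observe that, since the support of each $\mu_n$ is finite and is partitioned by $Z$ and $X\setminus Z$, the norm splits additively: $\|\mu_n\| = \|\mu_n\rstr Z\| + \|\mu_n\rstr(X\setminus Z)\|$. Hence the hypothesis $\|\mu_n\rstr Z\|\to 0$ together with $\|\mu_n\|=1$ gives $\|\mu_n\rstr(X\setminus Z)\| = 1-\|\mu_n\rstr Z\|\to 1$. In particular there is $N\in\omega$ with $\|\mu_n\rstr(X\setminus Z)\|>1/2$ for all $n\ge N$; letting $(n_k)_{k\in\omega}$ enumerate $\{n\in\omega:n\ge N\}$ and setting
\[
\nu_k=\frac{\mu_{n_k}\rstr(X\setminus Z)}{\|\mu_{n_k}\rstr(X\setminus Z)\|},
\]
we get at once $\|\nu_k\|=1$ and $\supp\nu_k\subseteq\supp\mu_{n_k}\setminus Z$ (the latter set is non-empty precisely because the denominator is positive).

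Next I would check that $(\nu_k)_{k\in\omega}$ is a \bjns\ on $X$, i.e.\ that $\nu_k(f)\to 0$ for every bounded $f\in C_p(X)$. Fix such an $f$ and let $M=\sup_{x\in X}|f(x)|$. Then $|(\mu_{n_k}\rstr Z)(f)|\le M\,\|\mu_{n_k}\rstr Z\|\to 0$, and since $\mu_{n_k}(f)=(\mu_{n_k}\rstr Z)(f)+(\mu_{n_k}\rstr(X\setminus Z))(f)$ with $\mu_{n_k}(f)\to 0$, we conclude $(\mu_{n_k}\rstr(X\setminus Z))(f)\to 0$. Dividing by $\|\mu_{n_k}\rstr(X\setminus Z)\|\in(1/2,1]$, a quantity converging to $1$, yields $\nu_k(f)\to 0$, as required.

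There is essentially no serious obstacle here: the statement reduces to the elementary observation that cutting off a small-norm piece of a normalized finitely supported signed measure and renormalizing perturbs its values on bounded functions only by a vanishing amount. The two points deserving a moment's care are that the norm of a finitely supported signed measure splits additively along a partition of its support (so $\|\mu_n\rstr(X\setminus Z)\|$ is bounded away from $0$ for large $n$), and that no continuous extension of $f$ is needed, because we evaluate the original $f$ against measures with smaller — but still finite — support. In contrast with Lemma~\ref{jns_clopen}, we do not claim that $(\nu_k)$ lives on $X\setminus Z$ as a subspace, only that it is a \bjns\ on $X$; consequently $Z$ need not be clopen, open, or closed, and the increasing sequence $(n_k)$ merely discards the finitely many indices $n<N$.
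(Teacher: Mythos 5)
Your proposal is correct and follows essentially the same route as the paper: restrict each $\mu_{n_k}$ to $X\setminus Z$, renormalize, and use the fact that the discarded piece has vanishing norm (the paper phrases this as $\|\nu_k-\mu_{n_k}\|\to 0$, you phrase it by splitting $\mu_{n_k}(f)$ and bounding the $Z$-part by $M\,\|\mu_{n_k}\rstr Z\|$, which is the same estimate). The additive splitting of the norm along the partition of the finite support, and the observation that no extension of $f$ is needed, are both correctly handled.
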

\begin{proof}
Take an increasing sequence $(n_k)$ that $\|\mu_{n_k}\rstr Z\|< 1$ for all $k$. Let $Y= X\setminus Z$. Define $\nu_k = (\mu_{n_k}\rstr Y)/\|\mu_{n_k}\rstr Y\|$. We have $$\lim\limits_{k}\|\nu_k - \mu_{n_k}\| = 0,$$ and  hence one has that $(\nu_k)_{k\in\omega}$ is a \bjns\ on $X$.
\end{proof}

Finally, we present the main result of this section. It is proved
under the following set-theoretic assumption
\begin{itemize}
\item[($\dagger$)] {\em
 There exists a function $A\mapsto u_A$ assigning to each $A\in
[\omega]^\omega$ a weak $P$-point $u_A\ni A$ on $\omega$ such that
for every pair $\langle f_1,f_2\rangle$ of functions from $\omega$
to $\omega$, there exists $\mathcal A\subset [\omega]^\omega$ of
size $|\mathcal A|<\mathfrak c$ such that for all
$A_1\not\in\mathcal A$ and $A_2\in [\omega]^\omega\setminus\{A_1\}$
we have $f_1(u_{A_1})\neq f_2(u_{A_2})$, provided that
$f_1(u_{A_1}), f_2(u_{A_2})\in\omega^*$.}
\end{itemize}

\medskip

We do not know whether $(\dagger)$ can be proved outright in ZFC. However, at
the end of this section we present a constellation of cardinal
characteristics of the continuum which implies $(\dagger)$ and holds in many standard models of set theory as well as is implied by the Continuum Hypothesis or Martin's axiom.

\begin{theorem}\label{main_ex}
Let $[\omega]^\omega\ni A\mapsto
u_A\in\clo_{\beta\omega}(A)\cap\omega^*$ be a witness for $(\dagger)$
and $X=\omega\cup\{u_A:A\in [\omega]^\omega\}$. Then  $C_p(X\times
X)$ does not have the \BJNP, and hence it does not have the \JNP.
\end{theorem}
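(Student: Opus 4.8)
\textit{Proof plan.} The strategy is to run the argument of the weaker theorem proved above in this section, but with two modifications forced by the fact that here the two factors coincide. First I would record that $X\in\hs$ — this is immediate from the definition of $\hs$, since we are choosing $u_A\in\clo_{\beta\omega}(A)\cap\omega^*$ for every $A\in[\omega]^\omega$ — that every point of $X\setminus\omega$ is a weak $P$-point (by the first clause of $(\dagger)$), and that $X$ is pseudocompact of cardinality $\mathfrak c$ by Proposition~\ref{HS_properties}.(1). Assuming towards a contradiction that $C_p(X\times X)$ has the \BJNP, Lemma~\ref{refining_jns} (with $Y=X$) furnishes a \bjns\ $(\mu_n)_n$ on $X\times X$ together with finite sets $A^1_n,A^2_n\subseteq X$ satisfying (1)--(4) of that lemma. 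The obstruction, absent in the weaker theorem, is the diagonal: a limit point of the relevant clopen set lying on $\bigtriangleup_X$ is of the form $(u_A,u_A)$, which is Rudin--Keisler compatible with itself, so Lemma~\ref{RK_compatible} cannot be applied until the supports have been moved off $\bigtriangleup_X$.

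So the first step is a dichotomy. If $\|\mu_n\rstr((X\times X)\setminus\bigtriangleup_X)\|\to 0$, I would use Lemma~\ref{avoid_null_seq} with $Z=(X\times X)\setminus\bigtriangleup_X$ to get a \bjns\ $(\nu_k)_k$ with $\supp\nu_k\subseteq\bigtriangleup_X$, transport it through the homeomorphism $(x,x)\mapsto x$ of $\bigtriangleup_X$ onto $X$ (for $g\in C_p^*(X)$ the function $g\circ\pi_X\in C_p^*(X\times X)$ restricts on $\bigtriangleup_X$ to $g$), and obtain a \bjns\ on $X$ — impossible, as $C_p(X)$ has no \JNP\ by Proposition~\ref{HS_properties}.(3) and hence no \BJNP\ by pseudocompactness. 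Otherwise, after passing to a subsequence, $\|\mu_n\rstr((X\times X)\setminus\bigtriangleup_X)\|\ge a>0$ for all $n$; viewing $\mu_n$ as a measure on $A^1_n\times A^2_n$, Lemma~\ref{avoid_diagonal} gives disjoint $G_n\subseteq A^1_n$, $H_n\subseteq A^2_n$ with $|\mu_n|(G_n\times H_n)\ge a/6$. Using the discreteness of $\bigcup_n A^1_n\cup\bigcup_n A^2_n$ in $\beta\omega$ I would pick pairwise disjoint clopen sets $\clo_{\beta\omega}(U_p)$ ($U_p\subseteq\omega$) isolating its points and set $W^1=\bigcup\{U_p:p\in\bigcup_n G_n\}$, $W^2=\bigcup\{U_p:p\in\bigcup_n H_n\}$; by (1)--(2) of Lemma~\ref{refining_jns} the sets $G_n,H_m$ are disjoint for all $n,m$, so $W^1\cap W^2=\emptyset$ and $Y:=\clo_X(W^1)\times\clo_X(W^2)$ is clopen in $X\times X$, disjoint from $\bigtriangleup_X$, with $|\mu_n|(Y)\ge a/6$ for all $n$. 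Lemma~\ref{jns_clopen} then yields a \bjns\ $(\nu_k)_k$ with $\supp\nu_k\subseteq Y\cap\supp\mu_{n_k}$. Taking $\bar A^1_k=\pi_X(\supp\nu_k)\subseteq A^1_{n_k}\cap\clo_X(W^1)$ and $\bar A^2_k=\pi_Y(\supp\nu_k)\subseteq A^2_{n_k}\cap\clo_X(W^2)$, one has $\bar A^1_k\cap\bar A^2_j=\emptyset$ for all $k,j$ (as $\clo_X(W^1)\cap\clo_X(W^2)=\emptyset$), the families $(\bar A^1_k)_k$ and $(\bar A^2_k)_k$ are each pairwise disjoint, and their union is discrete in $\beta\omega$; a last round of disjoint clopen neighborhoods in $\beta\omega$ replaces $\bar A^1_k,\bar A^2_k$ by subsets $\tilde V^1_k,\tilde V^2_k$ of $\omega$ with the same disjointness pattern and $\supp\nu_k\subseteq\clo_X(\tilde V^1_k)\times\clo_X(\tilde V^2_k)$.

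With the supports now in disjoint ``rectangles over $\omega$'' I would close the argument as in the weaker theorem, feeding in $(\dagger)$ through Lemma~\ref{lemma_Ma}. Choose $f_1,f_2\colon\omega\to\omega$ with $f_i\rstr\tilde V^i_k\equiv 2k$ and $f_i$ injective, odd-valued off $\bigcup_k\tilde V^i_k$; apply $(\dagger)$ to $\langle f_1,f_2\rangle$ to get $\mathcal A\subseteq[\omega]^\omega$ with $|\mathcal A|<\mathfrak c$; let $C=\{u_A:A\in\mathcal A\}$ and, by Lemma~\ref{lemma_Ma} with $S_k=\tilde V^1_k$, fix an infinite $T\subseteq\omega$ with $C\cap\clo_{\beta\omega}(\bigcup_{k\in T}\tilde V^1_k)\subseteq\bigcup_{k\in T}\clo_{\beta\omega}(\tilde V^1_k)$. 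Passing to $(\nu_k)_{k\in T}$, let $\hat f_i$ agree with $f_i$ on $\bigcup_{k\in T}\tilde V^i_k$ and be injective, odd-valued on the complement, and set $U'=\bigcup_{k\in T}\big(\clo_X(\tilde V^1_k)\times\clo_X(\tilde V^2_k)\big)$. The proof of Lemma~\ref{RK_compatible} goes through verbatim with $\omega$ replaced by $T$ and $f_i$ by $\hat f_i$, so any $(u_1,u_2)\in\clo_{X\times X}(U')\setminus U'$ would satisfy $u_1,u_2\in\omega^*$, $\hat f_1(u_1)=\hat f_2(u_2)\in\omega^*$, and $u_i\in\clo_X(\bigcup_{k\in T}\tilde V^i_k)\setminus\bigcup_{k\in T}\clo_X(\tilde V^i_k)$; disjointness of $\bigcup_{k\in T}\tilde V^1_k$ and $\bigcup_{k\in T}\tilde V^2_k$ gives $u_1\ne u_2$, and writing $u_i=u_{B_i}$ and using that $u_1$ concentrates on $\bigcup_{k\in T}\tilde V^1_k$ (where $\hat f_1=f_1$) one gets $f_1(u_{B_1})=f_2(u_{B_2})\in\omega^*$ with $B_1\ne B_2$; $(\dagger)$ then forces $B_1\in\mathcal A$, i.e.\ $u_1\in C$, whence by the choice of $T$ we get $u_1\in\bigcup_{k\in T}\clo_X(\tilde V^1_k)$ — a contradiction. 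Hence $U'$ is clopen in $X\times X$.

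To finish, for each $k\in T$ I would extend the sign function $z\mapsto\nu_k(\{z\})/|\nu_k(\{z\})|$ on $\supp\nu_k$ to a continuous $F_k\colon\clo_X(\tilde V^1_k)\times\clo_X(\tilde V^2_k)\to[-1,1]$; since the rectangles $\clo_X(\tilde V^1_k)\times\clo_X(\tilde V^2_k)$ ($k\in T$) are pairwise disjoint clopen subsets of $X\times X$ with clopen union $U'$, the function $F$ equal to $F_k$ on the $k$-th rectangle and to $0$ off $U'$ is bounded and continuous, and $\nu_k(F)=\|\nu_k\|=1$ for all $k\in T$, contradicting that $(\nu_k)_{k\in T}$ is a \bjns. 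This gives the theorem, and the ``moreover'' follows since the \JNP\ implies the \BJNP. I expect the diagonal to be the genuinely delicate point: both the dichotomy above and the order ``apply $(\dagger)$ to the fixed pair $f_1,f_2$, then thin by Lemma~\ref{lemma_Ma}'' are needed, the latter being harmless only because the offending limit ultrafilters live on the sets $\tilde V^i_k$ where $\hat f_i$ and $f_i$ agree.
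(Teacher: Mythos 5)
Your proposal is correct and follows essentially the same route as the paper's proof: Lemma~\ref{refining_jns} to normalize the supports, Lemma~\ref{avoid_diagonal} to extract disjoint rectangles carrying the off-diagonal mass, then $(\dagger)$ together with Lemmas~\ref{lemma_Ma} and~\ref{RK_compatible} to make the union of those rectangles clopen and contradict the \bjns\ condition via a sign function, and finally a reduction of the remaining diagonal-concentrated case to a \bjns\ on $X$ itself, contradicting Proposition~\ref{HS_properties}.(3). The only differences are organizational (stating the diagonal/off-diagonal dichotomy up front rather than first proving that the off-diagonal mass tends to zero, and renormalizing via Lemma~\ref{jns_clopen} early so the final evaluation gives exactly $1$ rather than $a/6$), and these do not change the substance of the argument.
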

\begin{proof}
Suppose the contrary, and let $(\mu_n)_{n\in\omega}$,
$(A^1_n)_{n\in\omega}$, and $(A^2_n)_{n\in\omega}$ be sequences of
measures and sets given by Lemma \ref{refining_jns}. Let $$A=
\bigcup_{i=1,2}\bigcup_{n\in\omega} A^i_n.$$  By condition (4) of
Lemma \ref{refining_jns}, the set  $A$ is
discrete.

First we will show that
\[\tag{$*$}\lim\limits_{n}|\mu_n|((A^1_n\times A^2_n)\setminus \bigtriangleup_X) = 0.\]

Assume towards the contradiction that there exist an $a>0$ and an increasing sequence $(n_k)$ such that  $$|\mu_{n_k}|((A^1_{n_k}\times A^2_{n_k})\setminus \bigtriangleup_X))> a$$ for all $k\in\omega$. Using Lemma \ref{avoid_diagonal}, for each  $k\in\omega$, we can find disjoint sets $B^i_k \subset A^i_{n_k},\ i=1,2$, such that $$|\mu_{n_k}|(B^1_k\times B^2_k) > a/6.$$
Let $B^i = \bigcup_{k\in\omega} B^i_k$ for  $i=1,2$. From conditions (1), and (2) of Lemma \ref{refining_jns}, it follows that the sets $B^1,B^2\subset A$ are disjoint.

Since the set $A$ is discrete, we can find a disjoint family $\{U_p: p\in A\}$ of subsets of $\omega$, such that the closure of $U_p$ in $\beta\omega$ is a neighborhood of $p$.

For $i=1,2$ and $k\in\omega$, let $$V^i_k = \bigcup\{U_p: p\in B^i_k\} \text{ and }  W^i_k = \bigcup\{U_p: p\in A^i_{n_k}\}.$$  These sets have the following properties:
\medskip

\begin{enumerate}
\item For fixed $i$, the family  $\{W^i_k:  k\in\omega\}$ is disjoint;

\medskip

\item $B^1_k\times B^2_k\subset \clo_{X\times X}(V^1_k\times V^2_k)\subset \clo_{X\times X}(W^1_k\times W^2_k)$;

\medskip

\item $\supp \mu_{n_k}\subset \clo_{X\times X}(W^1_k\times W^2_k)$.
\end{enumerate}
Moreover, the sets $V^i = \bigcup_{k\in\omega} V^i_k,\ i=1,2$, are
disjoint,  hence have disjoint closures in $X$. Consider maps
$f_i:\omega\to\omega$ such that $f_i^{-1}(\{2k\})=V^i_k$ and
$f_i\upharpoonright(\omega\setminus V^i)$ is
injective and takes odd values, where $i=1,2$. Set $$U' = \bigcup_{k\in\omega}
\clo_{X\times X}(V^1_k\times V^2_k)$$ and note that 
  if $(u_1,u_2)\in\clo_{X\times X}(U') \setminus U'$, then $u_1\ne u_2$ and, by
  Lemma~\ref{RK_compatible},
$f_1(u_1)=f_2(u_2)\in\omega^*$. Applying $(\dagger)$ to $(f_1,f_2)$
we get
 $\mathcal A\subset [\omega]^\omega$ of size $|\mathcal
A|<\mathfrak c$ such that for all $A_1\not\in\mathcal A$ and $A_2\in
[\omega]^\omega\setminus\{A_1\}$ we have $f_1(u_{A_1})\neq
f_2(u_{A_2})$, provided that $f_1(u_{A_1}),
f_2(u_{A_2})\in\omega^*$. It follows from the above that
$$ \clo_{X\times X}(U') \setminus U' \subset \{u_A:A\in\mathcal A\}^2. $$
Applying Lemma~\ref{lemma_Ma} we can find $I\in [\omega]^\omega$
such that the set $$\clo_X\big(\bigcup_{k\in I}V^i_k\big)\setminus \bigcup_{k\in
I}\clo_X (V^i_k)$$ is disjoint from $\{u_A:A\in\mathcal A\}$, where
$i=1,2$. Thus, the set $$U = \bigcup_{k\in I} \clo_{X\times X}(V^1_k\times
V^2_k)$$ is clopen in $X\times X$.   Indeed, since $U$ is clearly open, it remains to prove
that it is also closed. Otherwise there exists
$(u_1,u_2)\in\clo_{X\times X}(U)\setminus U$. Since for every
$n\not\in I$ we have that  $\clo_{X\times X}(V^1_n\times V^2_n)$ is
a clopen subset of $X\times X$ disjoint from $U$, it is also
disjoint from $\clo_{X\times X}(U)$, and hence $$\clo_{X\times
X}(U)\setminus U\subset \clo_{X\times X}(U')\setminus U',$$ which
yields $$ (u_1,u_2) \in \{u_A:A\in\mathcal A\}^2.
$$
In particular $u_1\in \{u_A:A\in\mathcal A\}$. However,
\begin{eqnarray*}
u_1 \in  \clo_{X}\big(\bigcup_{k\in I} V^1_k\big)\setminus \bigcup_{k\in I}
\clo_{X}(V^1_k)
\end{eqnarray*}
by the last clause of Lemma~\ref{RK_compatible}, which is impossible
by our choice of $I$. 

For every $k\in I $ set $C_k = \supp \mu_{n_k}\cap U$. Properties
(1)--(3) imply the inclusions
$$\supp \mu_{n_k}\cap (B^1_k\times B^2_k)\subset C_k\subset \clo_{X\times X}(V^1_k\times V^2_k)\,.$$
Let $f_k: C_k\to [-1,1]$ be defined  by  $$f_k(z) =
\mu_{n_k}(\{z\})/|\mu_{n_K}(\{z\})|$$ for $z\in C_k$. Since $C_k$ is
finite, we can extend each $f_k$ to a continuous function $$F_k:
\clo_{X\times X}(V^1_k\times V^2_k)\to [-1,1].$$ Since the sets
$\clo_{X\times Y}(V^1_k\times V^2_k)$, $k\in\omega$, are clopen and pairwise disjoint, the union of all
$F_k$, $k\in I$, is continuous on $U$, and we can extend this union
to a bounded continuous $F$ on $X\times X$, declaring value $0$
outside $U$. Clearly, for all $k\in I$, we have $\mu_{n_k}(F) >
a/6$, a contradiction.

Now, condition ($*$) and Lemma \ref{avoid_null_seq} imply  that
$X\times X$ admits a \bjns\  $(\nu_k)_{k\in\omega}$ supported on
$\bigtriangleup_{X}$. Clearly, $\bigtriangleup_{X}$ is homeomorphic
to $X$, and each continuous bounded function on $\bigtriangleup_{X}$
extends to a continuous bounded function on $X\times X$. We obtain a
contradiction with the lack of \BJNP\ for $C_p(X)$, see Propositions \ref{bjnp_beta_c_0} and
\ref{HS_properties}.(3).
\end{proof}

Now, we present a sufficient condition for $(\dagger)$. Recall
that $\mathfrak u$ denotes the minimal cardinality of a base of neighborhoods for an
ultrafilter in $\omega^*$, and $\mathfrak d$ is the minimal
cardinality of a cover of $\omega^\omega$ by compact sets.
 These cardinal characteristics can consistently attain
arbitrary uncountable regular values $\le\mathfrak{c}$, independently one from another,
 see \cite{BlaShe89}. If $\mathfrak u<\mathfrak d$, then $\mathfrak
 d$ is regular, see \cite{Vau90} and \cite{Bla10} for more information on these as well
as other cardinal characteristics of the continuum.

Recall that $u\in\omega^*$ is a \emph{$P$-point}, if for
any countable family $\mathcal B$ of open neighbourhoods of $u$ in
$\omega^*$ the intersection $\cap\mathcal B$ contains $u$ in its
interior. Clearly, every $P$-point is also a weak $P$-point.
$P$-points exist under various set-theoretic assumptions, e.g.,
under $\mathfrak d=\mathfrak c$ every filter on $\omega$ generated
by $<\mathfrak c$ many sets can be enlarged to a $P$-point, see \cite{Ket76}. On the
other hand, there are models  of ZFC without $P$-points (see \cite{Wim}).
In contrast with $P$-points the weak $P$-points exist in ZFC by
\cite{weak_P_exist}.

A family $\mathcal A\subset[\omega]^\omega$ is called
\emph{strongly centered}, if $\cap\mathcal A'\in [\omega]^\omega$ for any
finite $\mathcal A'\subset\mathcal A$. A sequence of elements of
$[\omega]^\omega$ is strongly  centered if the set of its elements is
strongly centered.

\begin{lemma}\label{suff_for_dag}
If $\mathfrak d=\mathfrak c\leq \mathfrak u^+$, then $(\dagger)$
holds. Moreover, there is a map  $A\mapsto u_A$  witnessing 
$(\dagger)$ such that $u_A$ is a $P$-point for every $A\in
[\omega]^\omega$.
 \end{lemma}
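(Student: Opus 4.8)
The plan is to build the assignment $A\mapsto u_A$ by a transfinite recursion of length $\mathfrak{c}$. Fix a bijective enumeration $[\omega]^\omega=\{A_\alpha:\alpha<\mathfrak{c}\}$ and an enumeration $(\omega^\omega)^2=\{(f^0_\xi,f^1_\xi):\xi<\mathfrak{c}\}$ of all pairs of functions $\omega\to\omega$. At stage $\alpha$, assuming $P$-points $u_\gamma$ with $A_\gamma\in u_\gamma$ have been produced for all $\gamma<\alpha$, I want a $P$-point $u_\alpha$ with $A_\alpha\in u_\alpha$ such that, for all $\xi,\gamma<\alpha$ and $j\in\{0,1\}$, one has $f^j_\xi(u_\alpha)\neq f^{1-j}_\xi(u_\gamma)$ whenever $f^{1-j}_\xi(u_\gamma)\in\omega^*$. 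Granting this, the verification of $(\dagger)$ is pure bookkeeping: for a pair $(f_1,f_2)=(f^0_\xi,f^1_\xi)$, the corresponding exceptional set is contained in $\{A_\alpha:\alpha\le\xi\}$, of cardinality $|\xi|+1<\mathfrak{c}$. Indeed, if $\alpha>\xi$, $f^0_\xi(u_{A_\alpha})\in\omega^*$, and $\beta\neq\alpha$ satisfies $f^1_\xi(u_{A_\beta})=f^0_\xi(u_{A_\alpha})$, then for $\beta<\alpha$ this contradicts the requirement imposed on $u_\alpha$ via the triple $(\xi,\beta,0)$, while for $\beta>\alpha$ (so also $\beta>\xi$) it contradicts the requirement imposed on $u_\beta$ via the triple $(\xi,\alpha,1)$. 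Since $P$-points are weak $P$-points, such a recursion produces a witness for $(\dagger)$ with the ``moreover'' clause.

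The heart of the matter is the recursion step, and it rests on the following avoidance lemma: \emph{if $\mathcal F$ is a filter on $\omega$ containing the Fr\'echet filter and generated by fewer than $\mathfrak{u}$ sets, and $g\colon\omega\to\omega$, $v\in\omega^*$ are given, then there is $B\subseteq\omega$ such that $\mathcal F\cup\{B\}$ has the finite intersection property and $g(u)\neq v$ for every ultrafilter $u\supseteq\mathcal F\cup\{B\}$}. To prove it, first suppose some fibre $g^{-1}(\{n\})$ is $\mathcal F$-positive (meets every member of $\mathcal F$ in an infinite set); then $B=g^{-1}(\{n\})$ works, since $B\in u$ forces $g(u)$ to be principal at $n$. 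Otherwise a short argument — using that a member of $\mathcal F$ which is a finite union of sets must have an $\mathcal F$-positive piece — shows that $g$ has infinite range on every $H\in\mathcal F$, so the pushforward filter $g(\mathcal F)$ generated by $\{g[H]:H\in\mathcal F\}$ is proper and is generated by fewer than $\mathfrak{u}$ sets. As every ultrafilter on $\omega$ has all its bases of size $\ge\mathfrak{u}$, $v$ cannot equal the $\subseteq^*$-upward closure of $g(\mathcal F)$, so there is $D\notin v$ meeting every member of $g(\mathcal F)$ in an infinite set; then $B=g^{-1}(D)$ is $\mathcal F$-positive and $D\in g(u)$ forces $g(u)\neq v$.

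With the lemma in hand, at stage $\alpha$ let $\mathcal T_\alpha$ be the set of triples $(\xi,\gamma,j)$ with $\xi,\gamma<\alpha$, $j\in\{0,1\}$, $f^{1-j}_\xi(u_\gamma)\in\omega^*$; since $\mathfrak{c}\le\mathfrak{u}^+$ (equivalently, $\mathfrak{u}=\mathfrak{c}$ or $\mathfrak{c}=\mathfrak{u}^+$) one has $|\alpha|\le\mathfrak{u}$ for every $\alpha<\mathfrak{c}$, hence $|\mathcal T_\alpha|\le\mathfrak{u}$. Enumerate $\mathcal T_\alpha$ in order type $\le\mathfrak{u}$, start from the filter generated by the Fr\'echet filter together with $A_\alpha$, and process the triples one at a time, at each step applying the avoidance lemma to the current filter (still generated by fewer than $\mathfrak{u}$ sets, as fewer than $\mathfrak{u}$ single sets have so far been adjoined) and to the data coded by the triple, enlarging the filter by the set produced. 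The final filter $\mathcal F^*_\alpha$ is generated by $<\mathfrak{c}$ sets, so Ketonen's theorem \cite{Ket76} — this is where $\mathfrak{d}=\mathfrak{c}$ enters — extends it to a $P$-point $u_\alpha$, which still contains $A_\alpha$ and still meets every requirement coded by $\mathcal T_\alpha$. The main obstacle is exactly the tension between the two size budgets: the avoidance lemma only functions while the filter is generated by $<\mathfrak{u}$ sets, so all (at most $\mathfrak{u}$) avoidance tasks of a stage must be discharged first, and only then can the $P$-point extension — which tolerates $<\mathfrak{c}$ generators but requires $\mathfrak{d}=\mathfrak{c}$ — be invoked; the hypothesis $\mathfrak{d}=\mathfrak{c}\le\mathfrak{u}^+$ is precisely what makes these fit, bounding the number of tasks per stage by $\mathfrak{u}$ and ensuring that no nonprincipal ultrafilter is reconstructible from a $<\mathfrak{u}$-generated filter. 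A secondary technical point is that $g$ need not be finite-to-one, which is why the avoidance lemma is split into the ``positive fibre'' and ``pushforward'' cases.
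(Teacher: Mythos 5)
Your proposal is correct and follows essentially the same route as the paper: a length-$\mathfrak c$ recursion over $[\omega]^\omega$ in which each stage discharges $|\alpha|\le\mathfrak u$ avoidance tasks (using that a family of size $<\mathfrak u$ cannot be a neighbourhood base of an ultrafilter, with the same principal-image versus pushforward case split) and then invokes Ketonen's theorem under $\mathfrak d=\mathfrak c$ to extend the resulting $<\mathfrak c$-generated filter to a $P$-point. The only differences are presentational: the paper packages all tasks of a stage into one sublemma building a strongly centered family of preimages, whereas you iterate a single-task filter lemma, and you impose the two orientations $j\in\{0,1\}$ explicitly where the paper obtains the symmetric inequality from the enumeration of all ordered pairs.
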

\begin{proof}
We need the following auxiliary
\begin{sublemma} \label{cl01}
Suppose that $\kappa$ is a cardinal such that $\kappa \leq \mathfrak
u$ and $\kappa<\mathfrak c=\mathfrak d$. Suppose that
$\{u_\xi:\xi<\kappa\}\subset\omega^*$ and $\{\langle
f^\xi_1,f^\xi_2\rangle:\xi<\kappa\}$ is a family of pairs of  maps from
$\omega$ to $\omega$. Then for every $A\in [\omega]^\omega$ there
exists a $P$-point $u\ni A$ such that $ f^\xi_1(u)\neq f^\xi_2(u_\xi)$ for
all $\xi<\kappa$, provided that $f^\xi_1(u),f^\xi_2(u_\xi)\in \omega^*$.
\end{sublemma}
\begin{proof}
Without loss of generality we may assume  that
$f^\xi_2(u_\xi)\in\omega^*$ for all $\xi<\kappa$. Fix
$A\in[\omega]^\omega$. Set $A_{-1}=A$ and suppose that for some
$\eta<\kappa$ we have already constructed a strongly  centered
 sequence $\mathcal A_\eta=\langle
A_\xi:-1\leq \xi<\eta\rangle$  of infinite subsets of $\omega$  with the following
property (we associate $\xi$ with the set $\{\alpha:\alpha<\xi\}$):
\begin{itemize}
\item[$(\ast)_\eta$] For every $\xi<\eta$, if $f^\xi_1\big(\bigcap_{\gamma\in a}A_\gamma\big)\in
[\omega]^\omega$ for all $a\in[\xi\cup\{-1\}]^{<\omega}$, then
$A_\xi=(f^\xi_1)^{-1}(B_\xi)$ for some $B_\xi\in [\omega]^\omega\setminus
f^\xi_2(u_\xi)$ (hence  $ f^\xi_1(A_\xi) \not\in f^\xi_2(u_\xi)$).
\end{itemize}
If $f^\eta_1\big(\bigcap_{\gamma\in a}A_\gamma\big)$ is finite for some
$a\in[\eta\cup\{-1\}]^{<\omega}$, then we simply set $A_\eta=\omega$. If
$f^\eta_1\big(\bigcap_{\gamma\in a}A_\gamma)$ is infinite for all
$a\in[\eta\cup\{-1\}]^{<\omega}$ and $f^\eta_1\big(\bigcap_{\gamma\in
a_\eta}A_\gamma)\not\in f^\eta_2(u_\eta)$  for some finite
$a_\eta\in[\eta\cup\{-1\}]^{<\omega}$, then we set
$$A_\eta=(f^\eta_1)^{-1}(f^\eta_1\big(\bigcap_{\gamma\in a_\eta}A_\gamma\big)).$$ Otherwise, notice
that for the family
$$\mathcal Y=\Big\{ f^\eta_1\big(\bigcap_{\gamma\in a}A_\gamma\big)\: :\: a\in[\eta\cup\{-1\}]^{<\omega}\Big\}\subset
f^\eta_2(u_\eta)$$ 
the family $$\{\clo_{\beta\omega}(Y)\setminus Y: Y\in \mathcal{Y}\}$$ cannot be a base of neighborhoods of $f^\eta_2(u_\eta)$ in $\omega^*$, because it has size
$<\mathfrak u$, and thus there exists $X\in f^\eta_2(u_\eta)$ with
 $|Y\setminus X|=\omega$ for all $Y\in\mathcal Y$. Set
$B_\eta=\omega\setminus X$ and $A_\eta=(f^\eta_1)^{-1}(B_\eta)$. It follows from
the above that $\mathcal A_{\eta+1}=\langle A_\xi:-1\leq \xi\leq \eta\rangle$
is strongly  centered and satisfies $(\ast)_{\eta+1}$. This completes our
recursive construction of a sequence $\mathcal A_{\kappa}=\langle
A_\xi:-1\leq \xi<\kappa \rangle$. Since $\kappa<\mathfrak c=\mathfrak
d$, there exists a $P$-point $u\supset\{A_\xi:-1\leq \xi<\kappa\}$, see
\cite[1.2 Theorem]{Ket76}. For every $\xi\in\kappa$ such that
$f^\xi_1(u)\in\omega^*$ we have that $f^\xi_1(A_\xi)\in f^\xi_1(u)\setminus
f^\xi_2(u_\xi)$, which completes our proof.
\end{proof}
Now we proceed with the proof of Lemma~\ref{suff_for_dag}. Let
 $\{A_\alpha:\alpha<\mathfrak c\}$ and $\{\langle f^\alpha_1, f^\alpha_2\rangle:\alpha<\mathfrak c\}$  be
enumerations of $[\omega]^\omega$ and the family of all pairs of
functions from $\omega$ to $\omega$, respectively.  Recursively over
$\alpha\in\mathfrak c$ we shall construct a $P$-point $u_\alpha\ni
A_\alpha$ as follows: Assuming that $u_\beta$ is already constructed
for all $\beta\in\alpha$, let us fix an enumeration $$\big\{\langle
u_\xi,(f^\xi_1,f^\xi_2) \rangle:\xi<\kappa\big\}$$ of $$\big\{\langle
u_\beta,(f^\gamma_{1},f^\gamma_{2})\rangle:\beta,
\gamma\in\alpha\big\},$$ where $\kappa=|\alpha\times\alpha|$. By Sublemma~\ref{cl01} there
exists a $P$-point $u_\alpha\ni A_\alpha$ such that
$f^\xi_1(u_\alpha)\neq f^\xi_2(u_\xi)$ for all $\xi<\kappa$,
provided that $f^\xi_1(u_\alpha),f^\xi_2(u_\xi),\in \omega^*$. In
other words,
$$\forall \beta,\gamma\in\alpha \:\big(f^\gamma_1(u_{\alpha}),f^\gamma_{2}(u_\beta)\in \omega^*
\Rightarrow f^\gamma_1(u_{\alpha})\neq f^\gamma_{2}(u_\beta)\big).$$
 The
map $A_\alpha\mapsto u_\alpha$ constructed this way clearly
witnesses $(\dagger)$.
 \end{proof}

\begin{remark} If there exists an $RK$-antichain of
size $\mathfrak c$ consisting of weak $P$-points, then $(\dagger)$
holds. Indeed, let $\mathcal{A}$ be such an antichain. Using a standard transfinite inductive construction, we can easily obtain a family $\{u_A: A\in [\omega]^\omega\}\subset \omega^*$ such that, for any $A\in [\omega]^\omega$, $A\in u_A$, and the ultrafilters $u_A$ and $u_B$ are isomorphic to distinct elements of $\mathcal{A}$ for  $A\ne B$. Clearly, the assignment $A\mapsto u_A$ is as required in $(\dagger)$. However, by Lemma~\ref{suff_for_dag},
$(\dagger)$ holds in all models of $\mathfrak c=\mathfrak
d\leq\omega_2$, in particular it holds in the Blass--Shelah model
constructed in \cite{BS} in which the $RK$-preorder is downwards
directed. Thus the existence of such $RK$-antichans as mentioned
above is sufficient but not necessary for $(\dagger)$ and thus also
for Theorem~\ref{main_ex}.
\end{remark}

Theorem \ref{lastest} follows now easily.

\begin{proof}[{\normalfont\textbf{Proof of Theorem \ref{lastest}}}]
The  theorem follows from Theorems \ref{main_ex} and \ref{bjnp_c_0}, Corollary \ref{corollary_cpstar_no_c0_cp_no_c0}, and the consistency of $(\dagger)$.
\end{proof}


\begin{thebibliography}{99}
\nocite{*}

\bibitem{AK} F. Albiac, N.J. Kalton, \textit{Topics in Banach Space Theory}, Springer--Verlag, New York, 2006.


\bibitem{arkh2} A. V. Arkhangel'ski, \textit{A survey of $C_p$-theory}, Questions Answers Gen. Topology \textbf{5} (1987), 1-109.

\bibitem{arkh4} A. V. Arkhangel'ski, \textit{$C_p$-theory}, in: \textit{Recent progress in general topology}, North-Holland, (1992), 1-56.


\bibitem{banakh-saak} T. Banakh, S. Gabriyelyan, \textit{The  Josefson--Nissenzweig property for locally convex spaces}, preprint, 2019.

\bibitem{BKS} T. Banakh, J. K\k akol, W. \'Sliwa, \textit{Metrizable quotients of $C_p$-spaces}, Topology Appl. \textbf{249}  (2018), 95-102.

\bibitem{BKS1} T. Banakh, J. K\k akol, W. \'Sliwa, \textit{Josefson--Nissenzweig property for $C_{p}$-spaces},   RACSAM \textbf{113}
(2019), 3015-3030.





\bibitem{Bie11} W. Bielas, {\em On convergence of sequences of Radon measures}, {\em Praca semestralna nr 2}, \'Srodowiskowe Studia Doktoranckie z Nauk Matematycznych, preprint, 2011,

\texttt{http://ssdnm.mimuw.edu.pl/pliki/prace-studentow/st/pliki/wojciech-bielas-2.pdf}

\bibitem{Bl}  A. Blass, \textit{The Rudin--Keisler ordering of P-Points}, Trans. Amer. Math. Soc.  \textbf{179} (1973), 145--166.

\bibitem{Bla10} A. Blass, \textit{Combinatorial cardinal characteristics of the continuum}, in: \textit{Handbook of set theory}, eds. M. Foreman, A. Kanamori, Springer Netherlands, 2010.

\bibitem{BS}  A. Blass and S. Shelah, \textit{There
may be simple $P_{\aleph_1}$- and $P_{\aleph_1}$-points and the Rudin--Keisler
 ordering may be downward directed}, Annals of Pure and Applied Logic,  \textbf{33} (1987), 213--243.

\bibitem{BlaShe89}  A. Blass and S. Shelah,
\textit{ Ultrafilters with small generating sets,} Israel Journal of
Mathematics \textbf{65} (1989),  259--271.






\bibitem{bourgain} J. Bourgain, {\em $H^\infty$ is a Grothendieck space}, Studia Math. \textbf{75} (1983), 193--216.









\bibitem{Ce} P.  Cembranos, $C\left( K,E\right) $\textit{\ contains a
complemented copy of }$c_{0}$, Proc. Amer. Math. Soc. \textbf{91} (1984),
556-558.




\bibitem{CN} W.W. Comfort, S. Negrepontis, \textit{The theory of ultrafilters}, Springer-Verlag, Berlin, 1974.





\bibitem{Diestel} J.~Diestel, {\em Sequences and series in Banach spaces}, Springer-Verlag, Berlin, 1984.

\bibitem{DMM} T. Dobrowolski, W. Marciszewski, J. Mogilski,  {\em On topological classification of function spaces $C_p(K)$ of
low Borel  complexity}, Trans. Amer. Math. Soc. \textbf{328} (1991), 307--324.




\bibitem{drew} P. Doma\'nski, L. Drewnowski, \textit{Fr\'echet spaces of continuous vector-valued functions: Complementability in dual Fr\'echet spaces and injectivity}, Studia Math.  \textbf{102} (1992) 257-267.



\bibitem{En}
R.\ Engelking, {\em General topology}, PWN--Polish Scientific Publishers, Warsaw, 1977, translated from the Polish language by the author, Monografie Matematyczne \textbf{60}. [Mathematical
Monographs \textbf{60}].

\bibitem{fe-ka-sliwa} J. C. Ferrando, J. K\k akol, M. Lopez-Pellicer, W. \'Sliwa, \textit{On the separable quotient problem for Banach spaces}, Functiones et Approximato Commentarii Mathematici \textbf{59} (2018), 153–173.

\bibitem{fre} F. J. Freniche, \textit{Barrelledness of the space of vector valued and simple functions},
Math. Ann. \textbf{267} (1984) 479-486.







\bibitem{gillman} L. Gillman and M. Jerison, \textit{Rings of continuous functions}, The University Series in
Higher Mathematics, D. Van Nostrand Co., Inc., Princeton, N.J.--Toronto--London--New York,
1960.



\bibitem{grothendieck} A. Grothendieck, \textit{Sur les applications lineaires faiblement compactes d'espaces du type $C(K)$}, Canadian J. Math. \textbf{5} (1953), 129–173.


\bibitem{Jarchow} H. Jarchow, \emph{Locally convex spaces}, B.G. Teubner, Stuttgart, 1981.

\bibitem{Jec02} T. Jech, {\em Set theory. 3rd Millenium edition}, Springer--Verlag, 2002.

\bibitem{hagler} J. Hagler, W. B. Johnson, \textit{On Banach spaces whose dual balls are not weak$^*$ sequentially compact}, Israel J. Math. \textbf{28} (1977), 325-330.

\bibitem{Hal74} P. Halmos, {\em Measure theory}, Springer--Verlag, 1974.



\bibitem{Ha} R. Haydon, \textit{Three examples in the theory of spaces of continuous functions}, C. R. Acad. Sci. Sor. \textbf{276} (1973), 685-687.

\bibitem{KSZ} J. K\k{a}kol, D. Sobota, L. Zdomskyy, \textit{The
Josefson--Nissenzweig theorem, Grothendieck property, and finitely supported
measures on compact spaces}, preprint, 2020, \texttt{https://arxiv.org/abs/2009.07552}.









\bibitem{ka-sa} J. K\k akol, S. A. Saxon, \textit{Separable quotients in $C_{c}(X)$, $C_{p}(X)$ and their duals},
Proc. Amer. Math. Soc. \textbf{145} (2017), 3829-3841.



\bibitem{kakol-sliwa}  J. K\k akol, W. \'Sliwa, \textit{Efimov spaces and the separable quotient problem for spaces $C_p (K)$}, J. Math. Anal. App., \textbf{457},  (2018), 104-113.





\bibitem{Ket76} J. Ketonen, {\it On the existence of P-points in the Stone--\v{C}ech
compactification of integers},  Fund. Math. \textbf{92} (1976),
91--94.


\bibitem{weak_P_exist} K. Kunen,  {\it Weak $P$-points in $\mathbb
N^*$.} Topology, Vol. II (Proc. Fourth Colloq., Budapest, 1978), pp.
741--749, Colloq. Math. Soc. Janos Bolyai, 23, North-Holland,
Amsterdam-New York, 1980.


\bibitem{La}  E. Lacey, \textit{Separable quotients of Banach spaces},
An. Acad. Brasil. Ci\`{e}nc. \textbf{44} (1972), 185--189.




\bibitem{marci} W. Marciszewski, \textit{A function space $C_p(X)$ not linearly homeomorphic to $C_p(X)\times\mathbb{R}$}, Fundamenta Math. \textbf{153} (1997), 125-140.



\bibitem{mccoy}
R.A. McCoy, I. Ntantu, \emph{Topological properties of spaces of continuous functions}, Lecture Notes in Math. \textbf{1315}, 1988.


\bibitem{Mill}  J. van Mill, \textit{An introduction to $\beta\omega$}, in: \textit{Handbook of set-theoretic topology}, North-Holland, (1984), 503-567.

\bibitem{vM2} J.\ van\ Mill, \textit{The infinite-dimensional topology of function spaces}, North-Holland Mathematical Library \textbf{64}, North-Holland, Amsterdam, 2001.

\bibitem{bonet}
P. P\'{e}rez Carreras, J. Bonet, \emph{Barrelled locally convex spaces}, North-Holland Mathematics Studies \textbf{131}, North-Holland, Amsterdam, 1987.

\bibitem{pfitzner} H. Pfitzner, {\em Weak compactness in the dual of a C$^*$-algebra is determined commutatively}, Math. Ann. \textbf{298} (1994), 349--371.

\bibitem{GP_unpubl} G. Plebanek, {\em On Grothendieck spaces}, 2005, an unpublished note.

\bibitem{Ro} H. P. Rosenthal, \textit{On quasi-complemented subspaces of
Banach spaces, with an appendix on compactness of operators from }$%
L_{p}\left( \mu \right) $\textit{\ to }$L_{r}\left( \nu \right) $, J. Funct.
Anal. \textbf{4} (1969), 176-214.

\bibitem{seever} G. L. Seever, \textit{Measures on F-spaces}, Trans. Amer. Math. Soc. \textbf{133} (1968), 267-280.














\bibitem{Tka1} V. V. Tkachuk, \textit{A $C_{p}$-theory problem
book. Topological and function spaces}, Springer, Berlin, 2010.

%

\bibitem{Vau90} J.E. Vaughan,  {\it Small uncountable cardinals and topology. With
an appendix by S. Shelah,} in:  {\it Open problems in topology}, eds. J.
van Mill and G. M. Reed,  North-Holland, Amsterdam, 1990 ,
pp. 195--218.

\bibitem{warner} S. Warner, \textit{The topology of compact convergence on continuous functions spaces}, Duke Math. J., \textbf{25} (1958), 265-282.

\bibitem{Wim} E. L. Wimmers, {\it The Shelah P-point independence theorem,} Israel
Journal of Mathematics \textbf{43} (1982),  28--48.







\end{thebibliography}
\end{document}